\providecommand{\U}[1]{\protect\rule{.1in}{.1in}}
\newtheorem{theorem}{Theorem}[section]
\newtheorem{corollary}[theorem]{Corollary}
\newtheorem{definition}[theorem]{Definition}
\newtheorem{example}[theorem]{Example}
\newtheorem{remark}[theorem]{Remark}
\newenvironment{proof}[1][Proof]{\noindent\textbf{#1.} }{\rule{0.5em}{0.5em}}
\begin{document}

\title{Hierarchical Schr\"{o}dinger-type operators: the case of potentials with local
singularities }
\author{Alexander Bendikov\thanks{A. Bendikov was supported by the Polish National
Science center, grant 2015/17/B/ST1/00062 and by SFB 1283 of the German
Research Council. }
\and Alexander Grigor'yan\thanks{A. Grigor'yan was supported by SFB 1283 of the
German Research Council. }
\and Stanislav Molchanov\thanks{S. Molchanov was supported by the Russian Science
Foundation (Projects: 20-11-20119 and 17-11-01098). }}
\maketitle

\begin{abstract}
The goal of this paper is twofold. We prove that the operator $H=L+V$ , a
perturbation of the Taibleson-Vladimirov multiplier $L=\mathfrak{D}^{\alpha}$
by a potential $V(x)=b\left\Vert x\right\Vert ^{-\alpha},$ $b\geq b_{\ast},$
is essentially self-adjoint and non-negative definite (the critical value
$b_{\ast}$ depends on $\alpha$ and will be specified later). While the
operator $H$ is non-negative definite the potential $V(x)$ may well take
negative values, e.g. $b_{\ast}<0$ for all $0<\alpha<1$. The equation $Hu=v$
admiits a Green function $g_{H}(x,y)$, the integral kernel of the operator
$H^{-1}$. We obtain sharp lower- and upper bounds on the ratio of the
functions $g_{H}(x,y)$ and $g_{L}(x,y)$. Examples illustrate our exposition.

\end{abstract}
\tableofcontents

\section{Introduction}

\setcounter{equation}{0}

The spectral theory of nested fractals similar to the Sierpinski gasket, i.e.
the spectral theory of the corresponding Laplacians, is well understood. It
has several important features: Cantor-like structure of the essential
spectrum and, as result, the large number of spectral gaps, presence of
infinite number of eigenvalues each of which has infinite multiplicity and
compactly supported eigenstates, non-regularly varying at infinity heat
kernels which contain oscilated in $\log t$ scale terms etc, see
\cite{GrabnerWoess}, \cite{DerfelGrabner} and \cite{BCW}.

The spectral properties mentioned above occure in the very precise form for
the Taibleson-Vladimirov Laplacian $\mathfrak{D}^{\alpha}$, the operator of
fractional derivative of order $\alpha$. This operator can be introduced in
several different forms (say, as $L^{2}$-multiplier in the $p$-adic analysis
setting, see \cite{Vladimirov}) but we select the geometric approach
\cite{Dyson1}, \cite{Molchanov}, \cite{Molchanov1}, \cite{BGP}, \cite{BGPW},
\cite{BendikovKrupski} and \cite{BGMS}.

\subsection{The Dyson hierarchical model}

Let us fix an integer $p\geq2$ and consider the family of partitions
$\{\Pi_{r}:r\in\mathbb{Z}\}$ of the set $X=\mathbb{[}0,+\infty)$ such that
each $\Pi_{r}$ consists of all $p$-adic intervals $I=[kp^{r},(k+1)p^{r})$. We
call $r$ the rank of the partition $\Pi_{r}$\ (respectively, the rank of the
interval $I\in\Pi_{r}$). Each interval of rank $r$ is the union of $p$
disjoint intervals of rank $(r-1)$. Each point $x\in X$ \ belongs to a certain
interval $I_{r}(x)$ of rank $r$, and the intersection of all $p$-adic
intervals $I_{r}(x)$ is $\{x\}.$

\emph{The hierarchical distance} $d(x,y)$ is defined as the Lebesgue measure
$l(I)$ of the minimal $p$-adic interval $I$ which contains $x$ and $y$. Since
any two points $x$ and $y$ belong to a certain $p$-adic interval,
$d(x,y)<\infty$. Clearly $d(x,y)=0$ if and only if $x=y$, $d(x,y)=d(y,x)$.
Moreover, for arbitrary $x,y$ and $z$ the \emph{ultrametric inequality }(which
is stronger than the triangle inequality) holds
\begin{equation}
d(x,y)\leq\max\{d(x,z),d(z,y)\}. \label{UMI}%
\end{equation}
The ultrametric space $(X,d)$ is \emph{complete, separable} and \emph{proper
}metric space. In $(X,d)$ the set of all open balls is countable and coincides
with the set of all $p$-adic intervals. In particular, any two balls either do
not intersect or one is a subset of another. Thus $(X,d)$ is a totally
disconnected separable topological space whence it is homeomorphic to the
Cantor set with a punctured point $\{0,1\}^{\aleph_{0}}\setminus\{o\}$. The
Borel $\sigma$-algebra generated by the ultrametric balls coincides with the
Borel $\sigma$-algebra generated by the Eucledian balls.

\emph{The hierarchical Laplacian} $L$ is defined pointwise as
\begin{equation}
(Lf)(x)=%
{\displaystyle\sum\limits_{r=-\infty}^{+\infty}}
C(r)\left(  f(x)-\frac{1}{l(I_{r}(x))}%
{\displaystyle\int\limits_{I_{r}(x)}}
fdl\right)  ,\text{ \ } \label{HL}%
\end{equation}
where $C(r)=(1-\kappa)\kappa^{r-1},$ $r\in\mathbb{Z}$. The series in
(\ref{HL}) diverges in general but it is finite and belongs to $L^{2}(X,l)$
for any $f$ \ which has compact support and takes constant values on the
$p$-adic intervals of a fixed rank $r$. The set of such functions we denote by
$\mathcal{D}$ and call it the set of test functions.

The operator $L$ admits a complete system of compactly supported
eigenfunctions. Indeed, let $I$ be a $p$-adic interval of rank $r$, and
$I_{1},I_{2},...,I_{p}$ be its $p$-adic subintervals of rank $r-1$. Let us
consider $p$ functions%
\[
\psi_{I_{i}}=\frac{1_{I_{i}}}{l(I_{i})}-\frac{1_{I}}{l(I)}.
\]
Each function $\psi_{I_{i}}$\ belongs to $\mathcal{D}$ and satisfies%
\[
L\psi_{I_{i}}=\kappa^{r-1}\psi_{I_{i}}\text{. \ }%
\]
When $I$ runs over the set of all $p$-adic intervals the set of eigenfunctions
$\psi_{I_{i}}$ is complete in $L^{2}(X,l)$. In particular, $L$ is a
essentially self-adjoint operator with pure point spectrum
\[
Spec(L)=\{0\}\cup\{\kappa^{r}:r\in\mathbb{Z}\}.
\]
Clearly each eigenvalue $\lambda(I)=\kappa^{r-1}$ has infinite multiplicity.
In particular, the spectrum of $L$ coincides with its essential part.

We shell see below that writing $\kappa=p^{-\alpha}$ the operator $L$
coinsides with the Taibleson-Vladimirov operator $\mathfrak{D}^{\alpha}$, the
operator of fractional derivative of order $\alpha$. The constant $D=2/\alpha$
is called the $\emph{spectral}$ \emph{dimension }related\emph{ }to the
operator\emph{ }$L$.

According to \cite{BGPW} the operator $L:\mathcal{D}\rightarrow L^{2}(X,l)$
can be represented as a hypersingular integral operator whose integral kernel
$J(x,y)$ is isotropic, i.e. $J(x,y)$ is a function of the distance
$\mathrm{d}(x,y)$, namely we have
\[
Lf(x)=%
{\displaystyle\int\limits_{0}^{\infty}}
\left(  f(x)-f(y)\right)  J(x,y)dl(y)
\]
and%
\[
J(x,y)=\frac{\kappa^{-1}-1}{1-\kappa p^{-1}}\cdot\frac{1}{\mathrm{d}%
(x,y)^{1+2/D}}.
\]
The Markovian semigroup $(e^{-tL})_{t>0}$ admits a continuous transition
density $p(t,x,y)$. The spectral dimension $D=2/\alpha$ indicates the
asymptotic behavior of the function $p(t,x,y)$, e.g.
\[
p(t,x,x)=t^{-D/2}\mathcal{A(}\log_{p}t\mathcal{)},
\]
where $\mathcal{A(\tau)}$ is a continuous non-constant $\alpha$-periodic
function, see \cite[Proposition 2.3]{MolchanovVainberg1}, \cite{BGPW} and
\cite{BCW}.

There are already several publications on the spectrum of the hierarchical
Laplacian acting on a general ultrametric measure space $(X,d,m)$
\cite{AlbeverioKarwowski}, \cite{AisenmanMolchanov}, \cite{Molchanov},
\cite{Molchanov1}, \cite{BGP}, \cite{BGPW}, \cite{BendikovKrupski},
\cite{BGMS}. Accordingly, the hierarchical Schr\"{o}dinger operator was
studied in \cite{Dyson2}, \cite{Molchanov}, \cite{MolchanovVainberg1},
\cite{MolchanovVainberg2}, \cite{Bovier}, \cite{Kvitchevski1},
\cite{Kvitchevski2}, \cite{Kvitchevski3} (the hierarchical lattice of Dyson)
and in \cite{Vladimirov94}, \cite{VladimirovVolovich}, \cite{Kochubey2004}
(the field of $p$-adic numbers).

By the general theory developed in \cite{BGP}, \cite{BGPW} and
\cite{BendikovKrupski}, any hierarchical Laplacian $L$ acts in $L^{2}(X,m),$
is essentially self-adjoint and can be represented as a hypersingular integral
operator
\begin{equation}
Lf(x)=%
{\displaystyle\int\limits_{X}}
(f(x)-f(y))J(x,y)dm(y)\text{. \ } \label{Spectrum}%
\end{equation}
The operator $L$ has a pure point spectrum, its Markovian semigroup
$(e^{-tL})_{t>0}$ admits with respect to $m$ a continuous transition density
$p(t,x,y)$. In terms of certain (intrinsically related to $L$) ultrametric
$d_{\ast}(x,y)$ the functions $J(x,y)$ and $p(t,x,y)$ can be represented in
the form%
\begin{equation}
\text{\ }J(x,y)=\int\limits_{0}^{1/\emph{d}_{\ast}(x,y)}N(x,\tau)d\tau\text{,}
\label{Jump-kernel}%
\end{equation}%
\begin{equation}
\emph{p}(t,x,y)=t\int\limits_{0}^{1/\emph{d}_{\ast}(x,y)}N(x,\tau)\exp
(-t\tau)d\tau. \label{d*-jump kernel}%
\end{equation}
The function $N(x,\tau)$ is called the\emph{ spectral function} and will be
specified later.

\subsection{Outline}

Let us describe the main body of the paper. In Section 2 we introduce the
notion of homogeneous hierarchical Laplacian $L$ and list its basic properties
e.g. the spectrum of the operator\emph{ }$L$ is pure point, all eigenvalues of
$L$ have infinite multiplicity and compactly supported eigenfunctions, the
heat kernel $p(t,x,y)$ exists and is a continuous function having certain
asymptotic properties etc. For the basic facts related to the ultrametric
analysis of heat kernels listed here we refere to \cite{BGPW},
\cite{BendikovKrupski}.

As a special example we consider the case $X=\mathbb{Q}_{p},$ the ring of
$p$-adic numbers endowed with its standard ultrametric $d(x,y)=\left\Vert
x-y\right\Vert _{p}$ and the normed Haar measure $m$. The hierarchical
Laplacian $L$ in our example coincides with the Taibleson-Vladimirov operator
$\mathfrak{D}^{\alpha}$, the operator of fractional derivative of order
$\alpha$, see \cite{Vladimirov}, \cite{Vladimirov94}, and \cite{Kochubey2004}.
The most complete sourse for the basic definitions and facts related to the
$p$-adic analysis is \cite{Koblitz} and \cite{Taibleson75}.

In the next sections we consider the Schr\"{o}dinger-type operator
$H=\mathfrak{D}^{\alpha}+V$ with potential $V\in L_{loc}^{1}$ having local
singularity, e.g. $V(x)=b\left\Vert x\right\Vert _{p}^{-\alpha}$, $0<\alpha
<1$. The main aim here is to prove that the the symmetric operator $H$ defined
(via quadratic forms) on $\mathcal{D}$, the set of locally constant functions
with compact supports, is semibounded and whence admits a self-adjoint
extension. Under certain conditions on $V$ we will prove that $H$ is
essentially self-adjoint operator.

We also prove several results about the negative part of the spectrum of $H$.
For instence, if $V\in L^{p}$ for some $p>1/\alpha$, then the operator $H$ has
essential spectrum equals to the spectrum of $\mathfrak{D}^{\alpha}$. In
particular, if $H$ has any negative spectrum, then it consists of a sequence
of negative eigenvalues of finite multiplicity. If this sequence is infinite
then it converges to zero.

In the concluding section we consider the operator $H=\mathfrak{D}^{\alpha
}+b\left\Vert x\right\Vert _{p}^{-\alpha}$ assuming that $0<\alpha<1$ and
$b\geq b_{\ast}$, the critical value which will be specified later. We prove
that the equation $Hu=v$ admits a fundamenthal solution $g_{H}(x,y)$ (the
Green function of the operator $H$). The function $g_{H}(x,y)$ is continuous
and takes finite values off the diagonal. Let $g_{\mathfrak{D}^{\alpha}}(x,y)$
be the Green function of the operator $\mathfrak{D}^{\alpha}$. The main result
of this section is the following statement: for any $b\geq b_{\ast}$ there
exists $\frac{\alpha-1}{2}\leq\beta<\alpha$ such that
\[
\frac{g_{H}(x,y)}{g_{\mathfrak{D}^{\alpha}}(x,y)}\asymp\left(  \frac
{\left\Vert x\right\Vert _{p}}{\left\Vert y\right\Vert _{p}}\wedge
\frac{\left\Vert y\right\Vert _{p}}{\left\Vert x\right\Vert _{p}}\right)
^{\beta},
\]
where the sign $\asymp$ means that the ratio of the left- and right hand sides
is bounded from below and above by positive constants. This result must be
compared with the Green function estimates for Schr\"{o}dinger operators on
complete Riemanian manifolds, see \cite{Grigoryan}.

\section{Preliminaries}

\setcounter{equation}{0}

\subsection{Homogeneous ultrametric space}

Let $(X,d)$ be a locally compact and separable ultrametric space. Recall that
a metric $d$ is called an \emph{ultrametric} if it satisfies the ultrametric
inequality
\begin{equation}
d(x,y)\leq\max\{d(x,z),d(z,y)\},
\end{equation}
that is stronger than the usual triangle inequality. The basic consequence of
the ultrametric property is that each open ball is a closed set. Moreover,
each point $x$ of a ball $B$ can be regarded as its center, any two balls $A$
and $B$ either do not intersect or one is a subset of another etc. In
particular, the ultrametric space $(X,d)$ is totally disconnected. See e.g.
Section 1 in \cite{BendikovKrupski} and references therein. In this paper we
assume that the ultrametric space $(X,d)$ is not compact and that it is
\emph{proper}, i.e. each closed ball is a compact set.

Let $\mathcal{B}$ be the set of all open balls and $\mathcal{B}(x)\subset
\mathcal{B}$ the set of all balls centred at $x$. Notice that the set
$\mathcal{B}$ is a countable set whereas $X$ by itself may well be
uncountable, e.g. $X=[0,+\infty)$ with $\mathcal{B}$ consisting of all
$p$-adic intervals.

To any ultrametric space $(X,d)$ one can associate in a standard fashion a
tree $\mathcal{T}.$ The vertices of the tree are metric balls, the boundary
$\partial\mathcal{T}$ can be identified with the one-point compactification
$X\cup\{\varpi\}$ of $X.$ We refere to \cite{BendikovKrupski} for a treatment
of the association between an ultrametric space and the tree of its metric balls.

An ultrametric measure space $(X,d,m)$ is called \emph{homogeneous} if the
group of isometries of $(X,d)$ acts transitively and preserves the measure. In
particular,\ a homogeneous ultrametric measure space is eather discrete or
perfect. In a homogeneous ultrametric measure space any two balls $A$ and $B$
having the same diameter satisfy $m(A)=m(B)$. Furthermore, one can choose an
ultrametric generating the same set of balls $\mathcal{B}$ and such that
$m(B)=\mathrm{diam}(B)$ for any ball $B$.

It is remarkable but easy to proof that $X$ can be identified with certain
locally compact Abelian group equipped with translation invariant ultrametric
$d$ and Haar measure $m$. This identification is not unique. One possible way
to define such identification is to choose the sequence $\{a_{n}\}$\ of
forward degrees associated with the tree of balls $\mathcal{T}$. This sequence
is two-sided if $X$ is non-compact and perfect, it is one-sided if $X$ is
compact and perfect, or if $X$ is discrete. In the 1st case we identify $X$
with $\Omega_{a}$, the ring of $a$-adic numbers, in the 2nd case with
$\Delta_{a}\subset\Omega_{a}$, the ring of $a$-adic integers, and in the 3rd
case with the discrete group $\Omega_{a}/\Delta_{a}$. We refere the reader to
the monograph \cite[(10.1)-(10.11), (25.1)-(25.2)]{HewittRoss} for the
comprehensive treatment of special groups $\Omega_{a}$, $\Delta_{a}$ and
$\Omega_{a}/\Delta_{a}$.

\subsection{Homogeneous hierarchical Laplacian}

\ Let $(X,d,m)$ be a non-compact homogeneous ultrametric measure space. Let
$C:\mathcal{B}\rightarrow(0,\infty)$ be a function satisfying the following conditions:

\begin{description}
\item[(i)] $C(A)=C(B)$ for any two balls $A$ and $B$ of the same diameter,

\item[(ii)] $\lambda(B):=\sum\limits_{T\in\mathcal{B}:\text{ }B\subseteq
T}C(T)<\infty$ for all non-singletone $B\in\mathcal{B}$,

\item[(iii)] $\lambda(B)\rightarrow+\infty$ as $B\rightarrow\{b\}$ for any
$b\in X.$
\end{description}

\bigskip The class of functions $C(B)$ satisfying these conditions is reach
enough, e.g. one can choose
\[
C(B)=(1/m(B))^{\alpha}-(1/m(B^{\prime}))^{\alpha}%
\]
for any two closest neighboring balls $B\subset B^{\prime}$. In this case
\[
\lambda(B)=(1/m(B))^{\alpha}.
\]
Let $\mathcal{D}$ be the set of all locally constant functions having compact
support. The set $\mathcal{D}$ belongs to the Banach spaces $C_{\infty}(X)$
and $L^{p}(X,m),$ $1\leq p<\infty,$ and is a dence subset there.

\emph{The} \emph{homogeneous} \emph{h}$\emph{ierarchical}$ \emph{Laplacian}
$L$ is defined (pointwise) as
\begin{equation}
Lf(x):=\sum\limits_{B\in\mathcal{B}(x)}C(B)\left(  f(x)-\frac{1}{m(B)}%
\int\limits_{B}fdm\right)  \text{.} \label{hlaplacian}%
\end{equation}
The operator $L:\mathcal{D}\rightarrow L^{2}(X,m)$ is symmetric and admits a
complete system of eigenfunctions%
\begin{equation}
f_{B}=\frac{\mathbf{1}_{B}}{m(B)}-\frac{\mathbf{1}_{B^{\prime}}}{m(B^{\prime
})}, \label{eigenfunction}%
\end{equation}
where the couple $B\subset B^{\prime}$ runs over all nearest neighboring balls
having positive measure. The eigenvalue corresponding to $f_{B}$ is
$\lambda(B^{\prime})$ defined above at condition (ii),
\begin{equation}
Lf_{B}(x)=\lambda(B^{\prime})f_{B}(x). \label{eigenvalue}%
\end{equation}
Since the system $\{f_{B}\}$ of eigenfunctions is complete, we conclude that
$L:\mathcal{D}\rightarrow L^{2}(X,m)$ is essentially self-adjoint operator.

\emph{The intrinsic ultrametric} $d_{\ast}(x,y)$ is defined as follows
\begin{equation}
d_{\ast}(x,y):=\left\{
\begin{array}
[c]{ccc}%
0 & \text{when} & x=y\\
1/\lambda(x\curlywedge y) & \text{when} & x\neq y
\end{array}
\right.  , \label{intrinsic ultrametric}%
\end{equation}
where $x\curlywedge y$ is the minimal ball containing both $x$ and $y$. In
particular, for any ball $B$ we have%
\begin{equation}
\lambda(B)=\frac{1}{\mathrm{diam}_{\ast}(B)}. \label{intrinsic diameter}%
\end{equation}
\emph{The spectral function} $\tau\rightarrow N(\tau),$ see equation
(\ref{Jump-kernel}), is defined as a left-continuous step-function having
jumps at the points $\lambda(B)$, and taking values%
\[
N(\lambda(B))=1/m(B).
\]
$\emph{The}$ \emph{volume function }$V(r)$ is defined by setting $V(r)=m(B)$
where the ball $B$ has $d_{\ast}$-radius $r$. It is easy to see that
\begin{equation}
N(\tau)=1/V(1/\tau). \label{Spectral function}%
\end{equation}
The Markovian semigroup $P_{t}=e^{-tL},t>0,$ admits a continuous density
$p(t,x,y)$ w.r.t. $m$, we call it \emph{the heat kernel.} The function
$p(t,x,y)$ can be represented in the form (\ref{d*-jump kernel}).

For $\lambda>0$ the Markovian resolvent $G_{\lambda}=(\lambda+L)^{-1}$ admits
a continuous strictly positive integral kernel $g(\lambda,x,y)$ w. r.t. the
measure $m$. The operator $G_{\lambda}$\ is well defined for $\lambda=0$ (i.e.
the Markovian semigroup $(P_{t})_{t>0}$ is transient) if and only if for some
(equivalently, for all) $x\in X$ the function $\tau\rightarrow1/V(\tau)$ is
integrable at $\infty$. The integral kernel $g(x,y):=g(0,x,y)$, called also
the Green function, is of the form%
\begin{equation}
g(x,y)=%
{\displaystyle\int\limits_{\emph{r}}^{+\infty}}
\frac{d\tau}{V(\tau)},\text{ }r=\emph{d}_{\ast}(x,y). \label{Green function}%
\end{equation}
Under certain Tauberian conditions it takes the form
\begin{equation}
g(x,y)\asymp\frac{r}{V(r)},\text{ }r=\emph{d}_{\ast}(x,y).\footnote{The sign
$\asymp$ means that the ratio of the left- and right hand sides is bounded
from below and above by positive constants.}
\label{Asymptotics_Green function}%
\end{equation}

\subsection{Subordination}

Let $\Phi:\mathbb{R}_{+}\rightarrow\mathbb{R}_{+}$ be an increasing
homeomorphism. For any two nearest neighbouring balls $B\subset B^{\prime}$ we
define
\begin{equation}
C(B)=\Phi\left(  1/m(B)\right)  -\Phi\left(  1/m(B^{\prime})\right)  .
\label{An example}%
\end{equation}
Then the following properties hold:

\begin{description}
\item[(i)] $\lambda(B)=\Phi\left(  1/m(B)\right)  $. In particular, the
corresponding hierarchical Laplacian, denote it $L_{\Phi}$, and the
hierarchical Laplacian $L_{Id}$ are related by the equation $L_{\Phi}%
=\Phi(L_{Id})$.

\item[(ii)] \ $d_{\ast}(x,y)=1/\Phi\left(  1/m(x\curlywedge y)\right)  $.

\item[(iii)] $V(r)\leq1/\Phi^{-1}(1/r).$ Moreover, $V(r)\asymp1/\Phi
^{-1}(1/r)$ whenever both $\Phi$ and $\Phi^{-1}$ are doubling and
$m(B^{\prime})\leq cm(B)$ for some $c>0$ and all neighboring balls $B\subset
B^{\prime}$. In particular, in this case
\[
p_{\Phi}(t,x,y)\asymp t\cdot\min\left\{  \frac{1}{t}\Phi^{-1}\left(  \frac
{1}{t}\right)  ,\frac{1}{m(x\curlywedge y)}\Phi\left(  \frac{1}{m(x\curlywedge
y)}\right)  \right\}  .
\]

\end{description}

\begin{remark}
In general, by Bochner's theorem, for any Markov generator $\mathbb{%
\mathcal{L}%
}$, the operator $-\Phi(-\mathbb{%
\mathcal{L}%
})$ is a Markov generator again provided $\Phi$ is a Bernstein function. It is
known that $\Phi(\lambda)=\lambda^{\alpha}$ is a Bernstein function if and
only if $0<\alpha\leq1$. Thus, for a general Markov generator $\mathbb{%
\mathcal{L}%
}$, the power $-(-\mathbb{%
\mathcal{L}%
})^{\alpha}$ is garanteed to be a Markov generator again only for
$0<\alpha\leq1$. For example, for the classical Laplace operator $\Delta$ in
$\mathbb{R}^{n}$, the power $-(-\Delta)^{\alpha}$ with $\alpha>1$ is not a
Markov generator. This is in striking contrast to the fact that the powers
$L^{\alpha}$ of any hierarchical Laplacian $L$ (remind that $-L$ is always a
Markov generator) are hierarchical Laplacians for all $\alpha>0$, see
\cite[Theorem 3.1]{BGPW}.
\end{remark}

\subsection{Multipliers}

As a special case of the general construction consider $X=\mathbb{Q}_{p}$, the
ring of $p$-adic numbers equipped with its standard ultrametric $\mathrm{d}%
(x,y)=\left\Vert x-y\right\Vert _{p}$. Notice that the ultrametric spaces
$(\mathbb{Q}_{p},\mathrm{d})$ and $(\mathbb{[}0,\infty\mathbb{)},d)$ with
non-eucledian$\ d,$ as explained in the introduction, are isometrically
isomorphic (the isometry can be established via identification of their trees
of metric balls).

Let $m$ be the normed Haar measure on the Abelian group $\mathbb{Q}_{p}$ and
$\widehat{f}$ the Fourier transform of the function $f\in L^{2}(\mathbb{Q}%
_{p},m)$. It is known, see \cite{Taibleson75}, \cite{Vladimirov94},
\cite{Kochubey2004}, that $\mathcal{F}:\mathcal{D}\rightarrow\mathcal{D}$ is a bijection.

Let $\Phi:\mathbb{R}_{+}\rightarrow\mathbb{R}_{+}$ be an increasing
homeomorphism. The self-adjoint operator $\Phi(\mathfrak{D)}$ we define as
multiplier, that is,
\[
\widehat{\Phi(\mathfrak{D)}f}(\xi)=\Phi(\left\Vert \xi\right\Vert
_{p})\widehat{f}(\xi),\text{ \ }\xi\in\mathbb{Q}_{p}.
\]
By \cite[Theorem 3.1]{BGPW}, $\Phi(\mathfrak{D)}$ is a homogeneous
hierarchical Laplacian. The eigenvalues $\lambda(B)$\ of the operator
$\Phi(\mathfrak{D)}$ are numbers
\begin{equation}
\lambda(B)=\Phi\left(  \frac{p}{m(B)}\right)  =\Phi\left(  \frac
{p}{\mathrm{diam}(B)}\right)  . \label{Lambda-Phi eigenvalue}%
\end{equation}
\ Let $p_{\Phi}(t,x,y)$ be the heat kernel associated with the operator
$\Phi(\mathfrak{D}).$ Assuming that both $\Phi$ and $\Phi^{-1}$ are doubling
we get the following relationship
\begin{equation}
p_{\Phi}(t,x,y)\asymp t\cdot\min\left\{  \frac{1}{t}\Phi^{-1}\left(  \frac
{1}{t}\right)  ,\frac{1}{\left\Vert x-y\right\Vert _{p}}\Phi\left(  \frac
{1}{\left\Vert x-y\right\Vert _{p}}\right)  \right\}  . \label{HK-bounds II}%
\end{equation}
The Taibleson-Vladimirov operator $\mathfrak{D}^{\alpha}$ introduced in
\cite{Taibleson75} and \cite{Vladimirov94} is the multiplier corresponding to
the function $\Phi(\tau)=\tau^{\alpha}$ . On the set $\mathcal{D}$ it can be
represented in the form%
\begin{equation}
\mathfrak{D}^{\alpha}\psi(x)=-\frac{1}{\Gamma_{p}(-\alpha)}\int_{\mathbb{Q}%
_{p}}\frac{\psi(x)-\psi(y)}{\left\Vert x-y\right\Vert _{p}^{1+\alpha}}dm(y),
\label{alpha_jump_kernel}%
\end{equation}
where $\Gamma_{p}(z)=(1-p^{z-1})(1-p^{-z})^{-1}$ is the $p$-adic
Gamma-function \cite[Sec.VIII.2, equation (2.17) ]{Vladimirov94}. The function
$z\rightarrow\Gamma_{p}(z)$ is meromorphic in the complex plane $%
\mathbb{C}
$ and satisfies the functional equation $\Gamma_{p}(z)\Gamma_{p}(1-z)=1.$

By what we said above, the heat kernel $p_{\alpha}(t,x,y),$ the transition
density of the Markovian semigroup $(e^{-t\mathfrak{D}^{\alpha}})_{t>0}$, can
be uniformly estimated as follows
\begin{equation}
p_{\alpha}(t,x,y)\asymp\frac{t}{(t^{1/\alpha}+\left\Vert x-y\right\Vert
_{p})^{1+\alpha}},
\end{equation}
In particular, the Markov semigroup $(e^{-t\mathfrak{D}^{\alpha}})_{t>0}$ is
transient if and only if $\alpha<1$ (equivalently, the spectral dimension
$D=2/\alpha>2$). In the transient case the Green function $g_{\alpha}(x,y)$
can be computed explicitly%
\begin{equation}
g_{\alpha}(x,y)=\frac{1}{\Gamma_{p}(\alpha)}\frac{1}{\left\Vert x-y\right\Vert
_{p}^{1-\alpha}}.
\end{equation}
For all facts listed above we refer the reader to \cite{BGP}, \cite{BGPW} and
\cite{BendikovKrupski}.

\section{Schr\"{o}dinger-type operators}

Let $(X,d,m)$ be a homogeneous ultrametric measure space and $L$ a homogeneous
hierarchical Laplacian on it. In this section we embark on the study of
Schr\"{o}dinger-type operators acting in $L^{2}(X,m)$. These are operators of
the form%
\begin{equation}
Hf(x)=Lf(x)+V(x)f(x). \label{operator H}%
\end{equation}
The real-valued measurable function $V$ on $X$ is called a potential. Our goal
in this section is to find conditions on the potential $V$ such that one can
associate with equation (\ref{operator H}) a self-adjoint operator $H$.

\subsection{The symbol of the operator $L$}

Identifying $(X,d)$ with a locally compact Abelian group, say $X=\mathbb{Q}%
_{a}$, we can regard $-L$ as an isotropic L\'{e}vy generator. By
(\ref{Spectrum}), the operator $L$ on $\mathcal{D}$ takes the form%
\begin{equation}
Lf(x)=%
{\displaystyle\int\limits_{X}}
(f(x)-f(y))J(x-y)dm(y),\text{ } \label{Levy generator}%
\end{equation}
or equivalently, in terms of the Fourier transform,
\[
\widehat{Lf}(\theta)=\widehat{L}(\theta)\cdot\widehat{f}(\theta),\text{
}\theta\in\widehat{X},
\]
where $\widehat{X}$ is the dual Abelian group which can be identified with
$\mathbb{Q}_{\widehat{a}}$ for a certain sequence $\widehat{a}$, for instance
$\widehat{\mathbb{Q}_{p}}$ can be identified with $\mathbb{Q}_{p}$, and
\begin{equation}
\widehat{L}(\theta)=%
{\displaystyle\int\limits_{X}}
[1-\mathfrak{\operatorname{Re}}\left\langle h,\theta\right\rangle ]J(h)dm(h).
\label{Levy symbol}%
\end{equation}
The function $\widehat{L}(\theta)\geq0$, the symbol of symmetric L\'{e}vy
generator $-L$, is a continuous \emph{negative definite function
}\cite{BergForst}. By \cite[Proposition 7.15]{BergForst}, the function
$\sqrt{\widehat{L}(\theta)}$ is subadditive. By the subordination property
\cite[Theorem 3.1]{BGPW}, the function $\widehat{L}(\theta)^{2}$ is the symbol
of symmetric L\'{e}vy generator $-L^{2}$, so the function $\widehat{L}%
(\theta)=\sqrt{\widehat{L}(\theta)^{2}}$ is subadditive as well, i.e. it
satisfies the triangle inequality
\begin{equation}
\widehat{L}(\theta_{1}+\theta_{2})\leq\widehat{L}(\theta_{1})+\widehat
{L}(\theta_{2}). \label{subadditivity}%
\end{equation}
Since $-L$ is an isotropic L\'{e}vy generator \cite[Sec. 5.2 ]{BGPW} , more
strong property holds true

\begin{theorem}
\label{ultrametric property}The function $\widehat{L}(\theta)$ satisfies the
ultrametric inequality%
\begin{equation}
\widehat{L}(\theta_{1}+\theta_{2})\leq\max\{\widehat{L}(\theta_{1}%
),\widehat{L}(\theta_{2})\}. \label{UM_Symbol}%
\end{equation}

\end{theorem}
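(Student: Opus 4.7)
The plan is to reduce the ultrametric inequality for $\widehat{L}$ to the ultrametric inequality already available on the dual group $\widehat{X}$. Identify $X$ with $\mathbb{Q}_a$ and $\widehat{X}$ with $\mathbb{Q}_{\widehat{a}}$; the latter carries a natural ultrametric norm $\|\cdot\|_{\widehat{a}}$ satisfying $\|\theta_1+\theta_2\|_{\widehat{a}} \leq \max\{\|\theta_1\|_{\widehat{a}},\|\theta_2\|_{\widehat{a}}\}$. If I can show that
\[
\widehat{L}(\theta)=\Psi\bigl(\|\theta\|_{\widehat{a}}\bigr)
\]
for some non-decreasing function $\Psi:\mathbb{R}_+\to\mathbb{R}_+$, then
$\widehat{L}(\theta_1+\theta_2)=\Psi(\|\theta_1+\theta_2\|_{\widehat{a}})\leq \Psi(\max\{\|\theta_1\|_{\widehat{a}},\|\theta_2\|_{\widehat{a}}\})=\max\{\widehat{L}(\theta_1),\widehat{L}(\theta_2)\}$, which is precisely (\ref{UM_Symbol}). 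So the whole theorem collapses to the two claims that $\widehat{L}$ is radial on $\widehat{X}$ and monotone in the dual norm.

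To verify that $\widehat{L}$ is radial, I would use the isotropy of the Lévy generator $-L$. By (\ref{Spectrum}) and (\ref{Levy generator}) the jump kernel has the form $J(h)=\mathsf{j}(\|h\|_a)$, and by (\ref{Jump-kernel}) one can write $\mathsf{j}$ as a locally constant step function of $\|h\|_a$. Thus in (\ref{Levy symbol}) it suffices to evaluate integrals of the form $\int_{\|h\|_a=p^n}[1-\operatorname{Re}\langle h,\theta\rangle]\,dm(h)$ over $a$-adic annuli. The standard $p$-adic orthogonality relation for characters (the integral of a non-trivial character over a ball is zero, and over the annihilator equals the measure of the ball) implies that these integrals are functions only of $\|\theta\|_{\widehat{a}}$; summing in $n$ yields a representation of $\widehat{L}$ as a function of $\|\theta\|_{\widehat{a}}$ alone. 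In fact one can read this off already from the spectral picture: the character $\chi_\theta$ is constant on any ball whose $d_\ast$-diameter exceeds $1/\|\theta\|_{\widehat{a}}^\ast$ for an appropriate pairing, so the eigenvalue-type decomposition (\ref{hlaplacian})--(\ref{eigenvalue}) assigns to $\chi_\theta$ the value $\lambda(B_\theta)$ where $B_\theta$ depends only on $\|\theta\|_{\widehat{a}}$.

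Monotonicity of $\Psi$ should come from the same formula: as $\|\theta\|_{\widehat{a}}$ grows, the "resolution" of the character $\chi_\theta$ becomes finer, so more annuli $\{\|h\|_a = p^n\}$ contribute with a fully averaged factor $1$ instead of partial cancellation, and the sum grows. Equivalently, in the eigenfunction viewpoint $\widehat{L}(\theta)=\lambda(B_\theta)$ and $\lambda$ is monotone along chains of nested balls by condition (ii) of the definition of the homogeneous hierarchical Laplacian. The main obstacle, as I see it, is not the ultrametric step—that is essentially automatic once radiality and monotonicity are in place—but rather setting up the identification of $X$ and $\widehat{X}$ cleanly enough that the claim "$\widehat{L}$ depends only on $\|\theta\|_{\widehat{a}}$" can be proved without invoking anything more than the ultrametric/isotropic hypotheses already in force; once radiality is established, monotonicity is either immediate or a one-line check from the spectral formula.
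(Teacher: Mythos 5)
Your proposal is correct and rests on the same reduction the paper uses: show that $\widehat{L}(\theta)$ depends only on the dual norm $\|\theta\|_{\widehat{a}}$ and is monotone in it, after which the ultrametric inequality is automatic. Where you diverge is in the primary route you pick for radiality, namely splitting the L\'evy symbol integral over annuli $\{\|h\|_a=p^n\}$ and invoking $p$-adic character orthogonality. That works, but it involves convergence bookkeeping for the jump kernel. The paper goes the other way, via the eigenfunctions: it Fourier-transforms the eigenvalue equation $Lf_B=\lambda(B')f_B$ and uses that the Fourier transform of the normalized Haar measure of a compact subgroup is the indicator of its annihilator, so $\widehat{f_B}=\mathbf{1}_{\partial B^{\perp}}$ is the indicator of a sphere in $\widehat{X}$; the eigenvalue equation then forces $\widehat{L}$ to equal $\lambda(B')$ on that sphere, and monotonicity is immediate from $C\subset D\Rightarrow\lambda(C)>\lambda(D)$. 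You in fact describe essentially this second route as your "spectral picture" alternative, so the gap between your proposal and the paper is one of emphasis rather than substance. One small slip to repair if you write up the spectral route: a character $\chi_\theta$ is trivial on precisely those balls $B$ with $\theta\in B^{\perp}$, i.e.\ the sufficiently \emph{small} balls, so your threshold condition should read that the relevant diameter is \emph{at most} (not "exceeds") the cut-off determined by $\|\theta\|_{\widehat{a}}$. This is cosmetic and does not change the argument.
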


\begin{proof}
In order to simplify notation we assume that $X=\mathbb{Q}_{p}$, the ring of
$p$-adic numbers. Let $B\subset B^{\prime}$ be two nearest neighboring balls
centred at the neutral element. Notice that both $B$ and $B^{\prime}$ are
compact subgroups of the group $\mathbb{Q}_{p}$, say $B=$ $p^{-k}%
\mathbb{Z}_{p}$ and $B^{\prime}=$ $p^{-k-1}\mathbb{Z}_{p}$.

Applying the Fourier transform to the both sides of equation (\ref{eigenvalue}%
) we get%
\begin{equation}
\widehat{L}(\theta)\widehat{f_{B}}(\theta)=\lambda(B^{\prime})\widehat{f_{B}%
}(\theta). \label{f-eigenvalue}%
\end{equation}
The measure $\omega_{B}=(\mathbf{1}_{B}m)/m(B)$ is the normalized Haar measure
of the compact subgroup $B$, similarly $\omega_{B^{\prime}}$. Since for any
locally compact Abelian group, the Fourier transform of the normalized Haar
measure of any compact subgroup $A$ is the indicator of its annihilator group
$A^{\bot}$, and in our particular case $B^{\bot}=$ $p^{k}\mathbb{Z}_{p}$ and
$(B^{\prime})^{\bot}=$ $p^{k+1}\mathbb{Z}_{p}$, we obtain
\begin{equation}
\widehat{f_{B}}(\theta)=\mathbf{1}_{B^{\bot}}(\theta)-\mathbf{1}_{(B^{\prime
})^{\bot}}(\theta)=\mathbf{1}_{\partial B^{\bot}}(\theta),
\label{f-eigenfunction}%
\end{equation}
where $\partial B^{\bot}$ is the sphere $B^{\bot}\setminus(B^{\prime})^{\bot}$.

Equations (\ref{f-eigenfunction}) and (\ref{eigenvalue}) imply that the
function $\widehat{L}(\theta)$ takes constant value $\lambda(B^{\prime})$ on
the sphere $\partial B^{\bot}$, i.e. $\widehat{L}(\theta)=\psi(\left\Vert
\theta\right\Vert _{p})$ for some function $\psi(\tau)$ such that $\psi(0)=0$
and $\psi(+\infty)=+\infty.$ Since $C\subset D$ implies $\lambda
(C)>\lambda(D)$, the function $\psi(\tau)$ can be chosen to be continuous and
increasing, so $\widehat{L}(\theta)=\psi(\left\Vert \theta\right\Vert _{p})$
satisfies the ultrametric inequality (\ref{UM_Symbol}) as claimed.
\end{proof}

\subsection{Locally bounded potentials}

If we assume that the potential $V$ is a locally bounded function then
\[
(Hu)(x):=(Lu)(x)+V(x)u(x)
\]
is a well defined symmetric operator $H:$ $\mathcal{D}\rightarrow L^{2}(X,m)$.
For the proof of the following theorem we refer to the paper \cite[Theorem
3.1]{BGM}

\begin{theorem}
\label{Schroedinger spectrum}Assume that $V$ is a locally bounded function, then

1. The operator $H$ is essentially self-adjoint.

2. If $V(x)\rightarrow+\infty$ as $x\rightarrow\varpi$, then the self-adjoint
operator $H$ has a compact resolvent. (Thus, its spectrum is discrete).

3. If $V(x)\rightarrow0$ as $x\rightarrow\varpi$, then the essential spectrum
of $H$ coincides with the spectrum of $L$. (Thus, the spectrum of $H$ is pure
point and the negative part of the spectrum consists of isolated eigenvalues
of finite multiplicity).

\begin{remark}
For the classical Schr\"{o}dinger operator $H=-\Delta+V$ defined on the set of
test functions $C_{0}^{\infty}$ the statement similar to the statement 1 of
Theorem \ref{Schroedinger spectrum} is known as the Sears's theorem: $H$ is
essentially self-adjoint if the potential $V$ admits a low bound%
\[
V(x)\geq Q(\left\vert x\right\vert ),
\]
where $0\leq Q(r)\in C(\mathbb{R}_{+})$ a non-decreasing function such that%
\[
\int_{0}^{\infty}Q(r)^{-1/2}dr=\infty,
\]
and it may fail to be essentially self-adjoint otherwise, see \cite[Chapter
II, Theorem 1.1 and Example 1.1]{BeresinShubin}.
\end{remark}
\end{theorem}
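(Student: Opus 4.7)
My plan for Part 1 (essential self-adjointness) is to approximate $H$ by operators with bounded potentials. Fix an exhaustion $B_n \nearrow X$ by open balls and set $V_n := V \mathbf{1}_{B_n}$, which is bounded because $V$ is locally bounded. The operator $H_n := L + V_n$ is then a bounded symmetric perturbation of the essentially self-adjoint $L$, hence essentially self-adjoint on $\mathcal{D}$ with self-adjoint closure $\bar H_n$. The crucial ultrametric feature is that the cut-off indicators $\mathbf{1}_{B_n}$ themselves belong to $\mathcal{D}$, and that for any $f \in \mathcal{D}$ supported in $B_n$ one has $Hf = H_n f$. To finish, I would take $u \in \ker(H^* - i)$ and show that the truncation $\mathbf{1}_{B_n} u$ approximately solves $(\bar H_n - i)(\mathbf{1}_{B_n} u) \approx 0$, with error coming from the commutator $[L, \mathbf{1}_{B_n}]$. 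A direct computation from the hierarchical formula (\ref{hlaplacian}) shows that this commutator is controlled by finitely many "boundary averages" over balls strictly containing $B_n$; combined with invertibility of $\bar H_n - i$ and the exhaustion property, this should force $u = 0$.

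For Part 2 (compact resolvent), the hypothesis $V \to +\infty$ at $\varpi$ combined with local boundedness makes $V$ bounded below, so $H$ is semi-bounded and its closure is defined through the quadratic form $\mathcal{E}(f) := \langle Lf, f \rangle + \int V |f|^2 \, dm$ on $\mathcal{D}$. To show $(H + c)^{-1}$ is compact for large $c$, I would verify that the form-domain embedding into $L^2(X,m)$ is compact. For a bounded sequence $\{f_k\}$ in the form norm, the uniform bound $\int V |f_k|^2 \, dm \leq M$ combined with $V \to +\infty$ yields tightness (mass concentrates in a fixed ball $B_R$). Inside $B_R$ the bound $\langle L f_k, f_k \rangle \leq M$ together with the pure point spectrum of $L$ (eigenfunctions $f_B$ supported in $B_R$ form a countable family with eigenvalues $\lambda(B')$ tending to infinity) gives precompactness by a spectral truncation argument. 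Tightness plus local precompactness then produce a convergent subsequence in $L^2(X,m)$.

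For Part 3 (essential spectrum), I would apply Weyl's essential spectrum theorem by showing that $V(L + 1)^{-1}$ is compact on $L^2(X,m)$. Write $V = V \mathbf{1}_{B_R} + V \mathbf{1}_{X \setminus B_R}$. The exterior term satisfies $\|V \mathbf{1}_{X \setminus B_R}\|_\infty \to 0$ as $R \to \infty$, so $V \mathbf{1}_{X \setminus B_R} (L+1)^{-1}$ tends to zero in operator norm. The interior term is a bounded compactly supported multiplier composed with $(L+1)^{-1}$; using the continuous kernel of $(L+1)^{-1}$ supplied by the spectral representation from the preliminaries, together with the finite measure of $B_R$, this composition is Hilbert-Schmidt and hence compact. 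Thus $V(L+1)^{-1}$ is compact, and $\sigma_{\mathrm{ess}}(H) = \sigma_{\mathrm{ess}}(L) = \sigma(L)$, since every eigenvalue of $L$ has infinite multiplicity and therefore lies in the essential spectrum. The negative part of $\sigma(H)$ is then purely discrete with finite multiplicities, accumulating only at $0$.

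The \emph{main obstacle} lies in Part 1: locally bounded $V$ gives $H$ no global lower bound, so classical criteria (Kato--Rellich, Sears's theorem) do not apply. The proof must rely on the specifically ultrametric fact that $\mathbf{1}_{B_n} \in \mathcal{D}$ and that the hierarchical commutator $[L, \mathbf{1}_{B_n}]$ admits a sharp closed-form expression; quantifying the error in the truncation argument sharply enough to close the deficiency-index computation is the real technical work.
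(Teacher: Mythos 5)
The paper does not prove this theorem itself: it explicitly refers the reader to \cite[Theorem 3.1]{BGM} for the proof, so there is no in-house argument to compare against. I will therefore assess your proposal on its own terms.

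Your overall architecture (exhaustion by balls, Weyl-type argument, form-compactness for the confining case) is reasonable, and you correctly identify the distinguishing ultrametric feature: $\mathbf{1}_{B_n}\in\mathcal{D}$. In fact the commutator $[L,\mathbf{1}_{B_n}]$ is even better behaved than your sketch suggests: writing it out from the jump-kernel representation gives $[L,\mathbf{1}_{B_n}]f(x)=\int f(y)\bigl(\mathbf{1}_{B_n}(x)-\mathbf{1}_{B_n}(y)\bigr)J(x,y)\,dm(y)$, and the ultrametric property forces $J(x,y)$ to be constant in $x\in B_n$ for fixed $y\notin B_n$ and constant in $y\in B_n$ for fixed $x\notin B_n$; hence $[L,\mathbf{1}_{B_n}]$ has rank at most two. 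However, there is a genuine gap in the way you intend to close Part 1. Your plan is to take $u\in\ker(H^*-i)$ and feed $\mathbf{1}_{B_n}u$ into $\bar H_n-i$, but nothing you say guarantees that $\mathbf{1}_{B_n}u$ belongs to $\operatorname{dom}(\bar H_n)=\operatorname{dom}(\bar L)$. Membership in $\operatorname{dom}(H^*)$ only provides the distributional identity $\langle u,(L+V)\varphi\rangle=\langle H^*u,\varphi\rangle$; it does not split into separate control on $L^*u$ and $Vu$, so one cannot simply push $\mathbf{1}_{B_n}$ through $L$ and invoke invertibility of $\bar H_n-i$. Some form of local regularity statement for functions in $\operatorname{dom}(H^*)$ (or a reformulation that never applies $\bar H_n$ to the truncation, e.g.\ a quadratic-form or basic-criterion argument) is required before the deficiency-index computation can be closed.

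There is also a concrete error in Part 3. You claim that $V\mathbf{1}_{B_R}(L+1)^{-1}$ is Hilbert--Schmidt because $B_R$ has finite measure and the resolvent has a continuous kernel. This is false in general: for $L=\mathfrak{D}^{\alpha}$ with $0<\alpha<1$ the resolvent kernel satisfies $g(1,x,y)\asymp\|x-y\|_p^{\alpha-1}$ near the diagonal, and $\int_{\|h\|_p<1}\|h\|_p^{2\alpha-2}\,dm(h)<\infty$ if and only if $\alpha>1/2$. So for $\alpha\le1/2$ the kernel $V(x)\mathbf{1}_{B_R}(x)g(1,x,y)$ is not square-integrable and your operator is not Hilbert--Schmidt, even though it is in fact compact. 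Compactness should instead be argued via the explicit eigenfunction structure of $L$ (the eigenfunctions $f_B$ with $B\subset B_R^c$ are annihilated by $\mathbf{1}_{B_R}$, those with $B\supset B_R$ have $\mathbf{1}_{B_R}f_B$ spanning a one-dimensional space, and those with $B\subset B_R$ have eigenvalues tending to infinity with at most finitely many below any threshold), or by an approximation through higher resolvent powers. Your Part 2 argument (tightness from the confining potential combined with local form-compactness inside a fixed ball) is sound.
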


\subsection{Potentials with local singularities}

If we are \ interested in potentials with local singularities, such as e.g.
$V(x)=\left\Vert x\right\Vert _{p}^{-\beta}$, then certain local conditions on
the potential are necessary in order to prove that the quadratic form%
\begin{equation}
Q(u,u)=Q_{L}(u,u)+Q_{V}(u,u) \label{quadratic form}%
\end{equation}
defined on the set%
\[
dom(Q):=dom(Q_{L})\cap dom(Q_{V})
\]
is a densly defined closed and bounded below quadratic form and whence it is
associated to a bounded below self-adjoint operator $H$. That means precisely
that there exists a constant $c>0$ and a self-adjoint operator $H$ such that
the form $Q^{\prime}(u,u):=Q(u,u)+c(u,u)$ (resp. the operator $H+c\mathrm{I}$)
is non-negative definite and that
\begin{equation}
Q^{\prime}(u,u)=((H+c\mathrm{I})^{1/2}u,(H+c\mathrm{I})^{1/2}u)
\label{min-extension}%
\end{equation}
for all $u\in dom(Q)$.

It is customary to write $H=L+V$, but it must be remembered that this is a
quadratic form sum and not an operator sum as in the previous subsection.

\begin{theorem}
\label{L^1_loc theorem}If $\ 0\leq V\in L_{loc}^{1}(X,m)$, then the quadratic
form (\ref{quadratic form}) is a regular Dirichlet form \cite{Fukushima}. In
particular, it is the form of a non-negative self-adjoint operator $H$,
\[
Q(u,u)=(H^{1/2}u,H^{1/2}u)
\]
and the set $\mathcal{D}$ is a core for $Q$.
\end{theorem}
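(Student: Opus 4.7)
The plan is to verify the defining properties of a regular Dirichlet form for $(Q,\mathcal D)$ directly on the core $\mathcal D$: that $Q$ is a well-defined, densely defined, non-negative closable symmetric form; that its closure is Markovian; and that $\mathcal D$ is both a form-norm core and sup-norm dense in $C_0(X)$. The self-adjoint operator $H$ is then obtained as the generator of the closure, and $Q(u,u)=(H^{1/2}u,H^{1/2}u)$ by construction.

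First I would check well-definedness and closability on $\mathcal D$. For $u\in\mathcal D$ the compact support and boundedness of $u$ give $Q_V(u,u)\le\|u\|_\infty^2\int_{\mathrm{supp}\,u}V\,dm<\infty$ since $V\in L^1_{\mathrm{loc}}$, while $Q_L(u,u)=(Lu,u)$ is finite because $L\mathcal D\subset L^2(X,m)$ by (\ref{hlaplacian}). Both forms are non-negative: $Q_L\ge 0$ because $L$ is essentially self-adjoint and non-negative on $\mathcal D$ (already established via the complete eigensystem (\ref{eigenfunction})--(\ref{eigenvalue})), and $Q_V\ge 0$ because $V\ge 0$. Closability then reduces to the standard Hilbert-space fact that a sum of two closable non-negative symmetric forms with a common dense domain is closable: $Q_L$ is the form of the non-negative self-adjoint extension of $L$, and $Q_V$ is the form of the self-adjoint multiplication operator by $\sqrt V$ restricted to $\mathcal D$.

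Next I would verify the Markovian property using the isotropic jump-kernel representation (\ref{Spectrum}),
\[
Q_L(u,u)=\tfrac12\iint_{X\times X}(u(x)-u(y))^2 J(x,y)\,dm(x)dm(y),
\]
together with the pointwise inequality $|\bar u(x)-\bar u(y)|\le|u(x)-u(y)|$ for the unit contraction $\bar u=(0\vee u)\wedge 1$; combined with $|\bar u|\le|u|$, which gives $Q_V(\bar u,\bar u)\le Q_V(u,u)$, this yields $Q(\bar u,\bar u)\le Q(u,u)$ on $\mathcal D$, and the inequality persists under form closure. For regularity, $\mathrm{dom}(Q)$ is by construction the form-norm closure of $\mathcal D$, so $\mathcal D$ is automatically a core; sup-norm density of $\mathcal D$ in $C_0(X)$ follows from the ultrametric structure, since any $f\in C_c(X)$ is uniformly continuous on its compact support and $X$ admits arbitrarily fine finite partitions of $\mathrm{supp}\,f$ into clopen balls, so averaging $f$ on such a partition produces a uniform approximant in $\mathcal D$, and $C_c(X)$ is sup-norm dense in $C_0(X)$.

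The main obstacle I anticipate is the clean closability argument for the sum; here however the non-negativity of both $Q_L$ and $Q_V$ circumvents any need for a relative boundedness hypothesis on $V$ with respect to $L$ (which would fail for strong local singularities), so the whole construction reduces to the general Dirichlet form machinery of \cite{Fukushima} once the three ingredients above are in place.
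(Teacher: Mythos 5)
Your preliminary steps (well-definedness, non-negativity, Markovianity via the jump-kernel contraction, sup-norm density of $\mathcal{D}$ in $C_0$) are all correct and track the standard Dirichlet-form package, but the heart of the theorem is missing. The quadratic form $Q$ in (\ref{quadratic form}) is \emph{not} defined as the closure of its restriction to $\mathcal{D}$; by the paper's construction it lives on $dom(Q)=dom(Q_L)\cap dom(Q_V)$. You assert that ``$dom(Q)$ is by construction the form-norm closure of $\mathcal{D}$, so $\mathcal{D}$ is automatically a core,'' which simply replaces the form in the statement with the minimal closed extension of $Q|_{\mathcal{D}}$ and renders the regularity claim tautological. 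The actual content of the theorem is that these two a priori different closed forms coincide, i.e.\ that $\mathcal{D}$ is form-norm dense in the possibly larger space $dom(Q_L)\cap dom(Q_V)$; nothing in your argument addresses this.

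The paper proves this in four nontrivial steps after first showing that $Q$ on $dom(Q_L)\cap dom(Q_V)$ is already closed (via $Q=\sup_\tau Q^\tau$ with $Q^\tau=Q_L+Q_{V\wedge\tau}$ and lower semicontinuity). Step 1 reduces to bounded $u\in dom(Q)$. Step 2--3 cut off to compact support via $u_B=\mathbf{1}_B u$, which requires the nontrivial contraction estimate $\limsup_{B\nearrow X}Q(u_B,u_B)\le Q(u,u)$; this is where the ultrametric inequality for the L\'evy symbol $\widehat L(\theta)$ (Theorem~\ref{ultrametric property}) is deployed via Fourier analysis. Step 4 mollifies by $u^B=u\ast m_B$ to land in $\mathcal{D}$, and it is only here that $V\in L^1_{loc}$ enters essentially, through the splitting $V|_K=V_1+V_2$ with $\|V_1\|_1<\varepsilon$ and $V_2\in L^\infty$. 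Your proposal uses the $L^1_{loc}$ hypothesis only to make $Q_V(u,u)$ finite on $\mathcal{D}$, so it never isolates where that hypothesis does real work, and it would ``prove'' the core property under weaker hypotheses under which it can fail. You need to supply the approximation chain --- truncation, cutoff with the contraction estimate, and mollification --- or an equivalent density argument; without it there is a genuine gap.
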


\begin{proof}
The set $\mathcal{D}$ beongs to both $dom(Q_{L})$ and $dom(Q_{V})$ hence $Q$
is densly defined. Set $V_{\tau}=V\wedge\tau$ and define on the set
$dom(Q_{L})$ the form
\[
Q^{\tau}(u,u)=Q_{L}(u,u)+Q_{V_{\tau}}(u,u).
\]
Since $V_{\tau}$ is bounded the form $Q^{\tau}$ is closed. In particular, the
function $u\rightarrow Q^{\tau}(u,u)$ is lower semicontinuous. Clearly
$Q(u,u)=\sup\{Q^{\tau}(u,u):\tau>0\}$. It follows that the function
$u\rightarrow Q(u,u)$ is lower semicontinuous. Hence by \cite[Theorem
4.4.2]{Davies} the form $Q$ is closed, and thus it is the form of a
non-negative definite self-adjoint operator $H$. Clearly the form $Q$ is
Markovian hence it is a Dirichlet form.

Let us show that $\mathcal{D}$ is a core for $Q$, i.e. $Q$ is a regular
Dirichlet form.

\textbf{Step 1} For $u\in dom(Q)$ we set $u_{n}=((-n)\vee u)\wedge n$, then
$u_{n}\in dom(Q)$ and $Q(u-u_{n},u-u_{n})\rightarrow0$, see \cite[Theorem
1.4.2]{Fukushima}. Therefore the set of bounded functions in $dom(Q)$ is a
core for $Q.$

\textbf{Step 2} Let $B$ be a ball centred at the neutral element. Let $u\in
dom(Q)$ be bounded and $u_{B}=1_{B}\cdot u$. The function $1_{B}$ is in
$dom(Q)$ (and even in $dom(L)$), whence applying \cite[Theorem 1.4.2]%
{Fukushima} we get: $u_{B}\in dom(Q)$ and%
\[
\sqrt{Q(u_{B},u_{B})}\leq\sqrt{Q(u,u)}+\left\Vert u\right\Vert _{\infty}%
\cdot\sqrt{Q(1_{B},1_{B})}.
\]
It is straightforward to show that%
\[
m(B)\lambda(B^{\prime})\leq Q(1_{B},1_{B})\leq2m(B)\lambda(B^{\prime}),
\]
where $B^{\prime}$ is the closest neighboring ball containing $B$, and
$\lambda(B^{\prime})$ is the eigenvalue of $L$ corresponding to the ball
$B^{\prime}$. Thus, if we assume that%
\begin{equation}
\lim_{B\nearrow X}m(B)\lambda(B^{\prime})=0\text{ \ \ }%
\ \label{intermediate condition}%
\end{equation}
(as it happens in the case of the operator $L=\mathfrak{D}^{\alpha},$
$\ \alpha>1$) then the following contraction property holds%
\begin{equation}
\limsup_{B\nearrow X}Q(u_{B},u_{B})\leq Q(u,u). \label{almost contraction}%
\end{equation}
Let $(R_{\lambda})_{\lambda>0}$ be the Markov resolvent corresponding to $Q$.
Let $Q_{1}(s,t):=Q(s,t)+(s,t)$. Then for any $v\in L^{2}(X,m),$
\[
Q_{1}(u_{B},R_{1}v)=(u_{B},v)\rightarrow(u,v)=Q_{1}(u,R_{1}v)
\]
Since $R_{1}(L^{2}(X,m))$ is dense in $dom(Q)$ with respect to the metric
$Q_{1}$, the sequence $u_{B}$ weakly converges to $u$ with respect to $Q_{1}%
$:
\begin{equation}
Q_{1}(u_{B},w)\rightarrow Q_{1}(u,w),\text{ \ }\forall w\in dom(Q).
\label{weak convergence}%
\end{equation}
Using equations (\ref{almost contraction}) and (\ref{weak convergence}) we
obtain:
\begin{align*}
\limsup_{B\nearrow X}Q_{1}(u-u_{B},u-u_{B})  &  =\limsup_{B\nearrow X}\left(
Q_{1}(u,u)-2Q_{1}(u_{B},u)+Q_{1}(u_{B},u_{B})\right) \\
&  =Q_{1}(u,u)-2\lim_{B\nearrow X}Q_{1}(u_{B},u)+\limsup_{B\nearrow X}%
Q_{1}(u_{B},u_{B})\\
&  \leq Q_{1}(u,u)-2Q_{1}(u,u)+Q_{1}(u,u)=0.
\end{align*}
Thus, if condition (\ref{intermediate condition}) holds, the set of bounded
functions with compact support in $dom(Q)$ is a core for $Q$ as desired.

\textbf{Step 3} In general to prove contraction property
(\ref{almost contraction}) we proceed as follows. Any ball $B$ centred at the
neutral element is the compact subgroup of $X$. Since the Fourier transform of
the normalized Haar measure of a compact subgroup is the indicator of its
annihilator group, we obtain
\begin{align*}
Q_{L}(u_{B},u_{B})  &  =\int_{\widehat{X}}\widehat{L}(\theta)\left\vert
\widehat{u_{B}}(\theta)\right\vert ^{2}d\widehat{m}(\theta)\\
&  =\int_{\widehat{X}}\widehat{L}(\theta)\left\vert \widehat{u}\ast\widehat
{m}_{B^{\perp}}(\theta)\right\vert ^{2}d\widehat{m}(\theta),
\end{align*}
where $\widehat{L}(\theta)$ is the sumbol of the multiplier $L$, $B^{\perp}$
is the annihilator group of the compact subgroup $B\subset X$ and $\widehat
{m}_{B^{\perp}}$ is the normed Haar measure of $B^{\perp}.$ The function
$\widehat{L}(\theta)$ is an increasing function of the $p$-adic norm
$\left\Vert \theta\right\Vert _{p}$ whence it satisfies the ultrametric
inequality. Having this in mind and using the inequality%
\[
\left\vert \widehat{u}\ast\widehat{m}_{B^{\perp}}\right\vert ^{2}%
\leq\left\vert \widehat{u}\right\vert ^{2}\ast\widehat{m}_{B^{\perp}}%
\]
we get
\begin{align*}
Q_{L}(u_{B},u_{B})  &  \leq\int_{\widehat{X}}\widehat{L}(\theta)\left(
\left\vert \widehat{u}\right\vert ^{2}\ast\widehat{m}_{B^{\perp}}\right)
(\theta)d\widehat{m}(\theta)\\
&  =\int_{\widehat{X}}\widehat{L}(\theta)\left(
{\displaystyle\int_{B^{\perp}}}
\left\vert \widehat{u}(\theta+\zeta)\right\vert ^{2}d\widehat{m}_{B^{\perp}%
}(\zeta)\right)  d\widehat{m}(\theta)\\
&  =\int_{B^{\perp}}\left(  \int_{\widehat{X}}\widehat{L}(\theta
+\zeta)\left\vert \widehat{u}(\theta)\right\vert ^{2}d\widehat{m}%
(\theta)\right)  d\widehat{m}_{B^{\perp}}(\zeta).
\end{align*}
Whence, using the ultrametric inequality (\ref{UM_Symbol}), we obtain%
\begin{align*}
Q_{L}(u_{B},u_{B})  &  \leq\int_{B^{\perp}}\left(  \int_{\widehat{X}}%
\max\left\{  \widehat{L}(\theta),\widehat{L}(\zeta)\right\}  \left\vert
\widehat{u}(\theta)\right\vert ^{2}d\widehat{m}(\theta)\right)  d\widehat
{m}_{B^{\perp}}(\zeta)\\
&  \leq\int_{B^{\perp}}\left(  \int_{\widehat{X}}\left(  \widehat{L}%
(\theta)+\widehat{L}(\zeta)\right)  \left\vert \widehat{u}(\theta)\right\vert
^{2}d\widehat{m}(\theta)\right)  d\widehat{m}_{B^{\perp}}(\zeta)\\
&  =Q_{L}(u,u)+\left(  \int_{B^{\perp}}\widehat{L}(\zeta)d\widehat
{m}_{B^{\perp}}(\zeta)\right)  (u,u).
\end{align*}
When $B\nearrow X$ the measure $\widehat{m}_{B^{\perp}}$ converges weakly to
the Dirac measure concentrated at the neutral element, whence we finally
obtain the following inequality%
\begin{equation}
\limsup_{B\nearrow X}Q_{L}(u_{B},u_{B})\leq Q_{L}(u,u).
\label{almost contraction'}%
\end{equation}
Evidently inequality (\ref{almost contraction'}) implies inequality
(\ref{almost contraction}) as desired.

\textbf{Step 4} Now let $u\in dom(Q)$ be bounded and of compact support. Let
$B$ be a ball centred at the neutral element of $X$\ (a compact subgroup of
$X$) and $m_{B}$ be its normed Haar measure. We set $u^{B}=u\ast m_{B}$. The
function $u^{B}$ is locally constant and has a compact support, hence belongs
to $dom(Q)$. We have $\widehat{u^{B}}=\widehat{u}\cdot1_{B^{\bot}}$ whence%
\[
\left\Vert u-u^{B}\right\Vert _{2}^{2}=%
{\displaystyle\int\limits_{(B^{\bot})^{c}}}
\left\vert \widehat{u}(\theta)\right\vert ^{2}d\widehat{m}(\theta)
\]
which converges to zero as $B$ converges to the trivial subgroup $\{e\}$. A
similar argument establishes that%
\[
\lim_{B\rightarrow\{e\}}Q_{L}(u-u^{B},u-u^{B})=\lim_{B\rightarrow\{e\}}%
{\displaystyle\int\limits_{(B^{\bot})^{c}}}
\widehat{L}(\theta)\left\vert \widehat{u}(\theta)\right\vert ^{2}d\widehat
{m}(\theta)=0.
\]
There exists a compact set $K$ which contains the support of every $u-u^{B}$
when the diameter of $B$ is less or equal one. Given $\varepsilon>0$ there
exists a decomposition $V|_{K}=V_{1}+V_{2}$ such that $\left\Vert
V_{1}\right\Vert _{1}<\varepsilon$ and $V_{2}\in L^{\infty}(X,m)$. We then
have%
\begin{align*}
Q_{V}(u-u^{B},u-u^{B})  &  =%
{\displaystyle\int\limits_{K}}
V\left\vert u-u^{B}\right\vert ^{2}dm\\
&  =%
{\displaystyle\int\limits_{K}}
V_{1}\left\vert u-u^{B}\right\vert ^{2}dm+%
{\displaystyle\int\limits_{K}}
V_{2}\left\vert u-u^{B}\right\vert ^{2}dm\\
&  \leq4\varepsilon\left\Vert u\right\Vert _{\infty}^{2}+\left\Vert
V_{2}\right\Vert _{\infty}\left\Vert u-u^{B}\right\Vert _{2}^{2}.
\end{align*}
Therefore%
\[
\limsup_{B\rightarrow\{e\}}Q_{V}(u-u^{B},u-u^{B})\leq4\varepsilon\left\Vert
u\right\Vert _{\infty}^{2}%
\]
for all $\varepsilon>0$. In other words%
\[
\lim_{B\rightarrow\{e\}}Q_{1}(u-u^{B},u-u^{B})=0
\]
and thus $\mathcal{D}$ is indeed a core for $Q=Q_{L}+Q_{V}$.
\end{proof}

\begin{remark}
It is clear that the above theorem can be extended to $V$ which are bounded
below and in $L_{loc}^{1}(X,m)$ by simply adding a large enough positive
constant. If, however, we are interested in $V$ with negative local
singularities, then stronger local conditions on $V$ are necessary in order to
be able to prove that the form $Q$ is closed.
\end{remark}

\begin{definition}
Let $p\geq1$ be fixed. We say that a potential $V$ lies in $L^{p}+L^{\infty}$
if one can write $V=V^{\prime}+V^{\prime\prime}$where $V^{\prime}\in
L^{p}(X,m)$ and $V^{\prime\prime}\in L^{\infty}(X,m)$ . This decomposition is
not unique, and, if it is possible at all, then one can arrange for
$\left\Vert V^{\prime}\right\Vert _{p}$ to be as small as one chooses.
\end{definition}

\begin{theorem}
\label{L^p-L^infty theorem}Let $L=\mathfrak{D}^{\gamma}$, and let
$Q=Q_{L}+Q_{V}$ be quadratic form (\ref{quadratic form}) where $V\in
L^{p}+L^{\infty}$ for some $p>1/\gamma$. Then:

1. $Q$ is a densly defined closed and bounded below form whence it is
associated with a bounded below self-adjoint operator $H$.

2. If $2\leq1/\gamma<p$ then $dom(H)=dom(\mathfrak{D}^{\gamma})$. The same is
true if $1/\gamma<2$ and $p=2$ .
\end{theorem}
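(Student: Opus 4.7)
The plan is to reduce both parts to classical abstract perturbation theorems (KLMN for part 1, Kato--Rellich for part 2), the nontrivial analytic input being a Sobolev-type embedding for $\mathfrak{D}^{\gamma}$ on $\mathbb{Q}_{p}$. I would first fix the splitting $V=V'+V''$ with $V'\in L^{p}$, $V''\in L^{\infty}$; since $Q_{V''}$ is a bounded form perturbation, the analysis reduces to controlling $Q_{V'}$, and the splitting may be chosen so that $\|V'\|_{p}$ is arbitrarily small, which supplies the smallness needed below. The analytic engine is the ultracontractive estimate $\|e^{-tL}\|_{L^{1}\to L^{\infty}}\le C t^{-1/\gamma}$, which is immediate from the diagonal heat-kernel bound recalled in Section 2. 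Varopoulos's equivalence between ultracontractivity and Sobolev inequalities then furnishes, for $\gamma<1$,
$$\|u\|_{q}^{2}\le C\bigl(Q_{L}(u,u)+\|u\|_{2}^{2}\bigr),\qquad\frac{1}{q}=\frac{1}{2}-\frac{\gamma}{2},$$
as well as the stronger embedding $dom(\mathfrak{D}^{\gamma})\hookrightarrow L^{r}$ with $1/r=1/2-\gamma$ when $\gamma<1/2$, and an $L^{\infty}$ embedding in the critical/supercritical regime.

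For part 1, Hölder with exponents $p$ and $p/(p-1)$ yields
$$|Q_{V'}(u,u)|\le\|V'\|_{p}\,\|u\|_{2p/(p-1)}^{2},$$
and the hypothesis $p>1/\gamma$ translates precisely to $2p/(p-1)<q=2/(1-\gamma)$. Interpolating $L^{2p/(p-1)}$ between $L^{2}$ and $L^{q}$ and using the Sobolev embedding above produces the KLMN inequality
$$|Q_{V'}(u,u)|\le\epsilon\,Q_{L}(u,u)+C_{\epsilon}\|u\|_{2}^{2},\qquad u\in dom(\mathfrak{D}^{\gamma/2}),$$
for any $\epsilon>0$ (the small parameter obtained by taking $\|V'\|_{p}$ small, which is permitted by the decomposition). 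The KLMN theorem then asserts that $Q$ is closed and bounded below with form domain $dom(Q_{L})=dom(\mathfrak{D}^{\gamma/2})$, and density of $\mathcal{D}$ follows from the argument in Theorem \ref{L^1_loc theorem}.

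For part 2, I would promote the form bound to an operator bound and invoke Kato--Rellich, showing that for each $\epsilon>0$ there exists $C_{\epsilon}$ with
$$\|Vu\|_{2}\le\epsilon\|\mathfrak{D}^{\gamma}u\|_{2}+C_{\epsilon}\|u\|_{2},\qquad u\in dom(\mathfrak{D}^{\gamma}).$$
In the regime $2\le 1/\gamma<p$ (so $\gamma\le 1/2$), Hölder gives $\|V'u\|_{2}\le\|V'\|_{p}\|u\|_{2p/(p-2)}$, and the strict inequality $p>1/\gamma$ places $2p/(p-2)$ strictly below the Sobolev exponent $r$, so the embedding $dom(\mathfrak{D}^{\gamma})\hookrightarrow L^{r}$ delivers the bound with small constant. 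In the regime $1/\gamma<2$, $p=2$, the embedding $dom(\mathfrak{D}^{\gamma})\hookrightarrow L^{\infty}$ together with $\|V'u\|_{2}\le\|V'\|_{2}\|u\|_{\infty}$ achieves the same end. Kato--Rellich then yields $dom(H)=dom(\mathfrak{D}^{\gamma})$.

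The step I expect to require the most care is the passage from the ultracontractive heat-kernel bound to a clean Sobolev embedding on $(\mathbb{Q}_{p},\|\cdot\|_{p})$ with the sharp critical exponent, since the paper works primarily in the heat-kernel / spectral-function language rather than via classical Sobolev analysis; however, the doubling properties of $V(r)$ and $\Phi$ recorded in Section 2.3 are precisely what guarantee that Varopoulos's classical argument transports verbatim to the ultrametric setting with effective dimension $D=2/\gamma$.
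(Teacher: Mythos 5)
Your argument is correct but takes a genuinely different route to the key analytic input. Where you pass through the diagonal heat-kernel bound $p(t,x,x)\asymp t^{-1/\gamma}$, ultracontractivity, and Varopoulos's equivalence to land on the Sobolev inequality $\|u\|_{2/(1-\gamma)}^{2}\lesssim Q_{L}(u,u)+\|u\|_{2}^{2}$, the paper gets the equivalent subcritical embeddings directly and elementarily: since $(\mathfrak{D}^{\gamma}+\mathrm{I})^{-\alpha}$ is a Fourier multiplier with symbol $g(y)=(\|y\|_{p}^{\gamma}+1)^{-\alpha}$, one computes $\|g\|_{s}$ explicitly by summing a geometric-type series over spheres, applies H\"older to $g\widehat{f}$, and finishes with Hausdorff--Young; this yields $(\mathfrak{D}^{\gamma}+\mathrm{I})^{-\alpha}\colon L^{2}\to L^{p}$ for $2\le p<2/(1-2\alpha\gamma)$ (statement E1) and then the multiplication bound E2 is another H\"older step. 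The endgame is the same: form-boundedness with relative bound $<1$, hence KLMN (part 1), and a relative operator bound with $\delta<1$, hence Kato--Rellich (part 2). Both derivations are sound; the trade-offs are that the paper's Fourier route is self-contained and in the process also establishes \emph{compactness} of $\mathcal{W}\cdot(\mathfrak{D}^{\gamma}+\mathrm{I})^{-\alpha}$ via a Hilbert--Schmidt approximation, a fact the paper reuses in Theorem \ref{negative spec}, whereas the Varopoulos route imports a nontrivial abstract theorem but bypasses any explicit symbol computation. One small correction: for part 1 you do not need to invoke the cut-off argument of Theorem \ref{L^1_loc theorem} to see that $\mathcal{D}$ is dense; the KLMN theorem already gives $dom(Q)=dom(Q_{L})$ with $Q$ comparable to $Q_{L}$ there, so density (and core-ness) of $\mathcal{D}$ is inherited directly from $Q_{L}$. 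Also, since $2p/(p-1)<2/(1-\gamma)$ is \emph{strict} when $p>1/\gamma$, the interpolation exponent alone already supplies an arbitrarily small $\epsilon$ in the KLMN inequality; appealing additionally to the freedom to shrink $\|V'\|_{p}$ is harmless but redundant.
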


\begin{proof}
The set $\mathcal{D}$ is in both $dom(Q_{L})$ and $dom(Q_{V})$ whence $Q$ is
densly defined. Given $\varepsilon>0$ we may write $\left\vert V\right\vert
=W+W^{\prime}$ where $\left\Vert W\right\Vert _{p}<\varepsilon$ and
$W^{\prime}\in L^{\infty}(X,m)$. We claim that if $\varepsilon>0$ is
sufficiently small, then%
\begin{equation}
\left\Vert W^{1/2}u\right\Vert _{2}^{2}\leq\frac{1}{2}Q_{L}(u,u)+c_{0}%
\left\Vert u\right\Vert _{2}^{2} \label{claim}%
\end{equation}
for some constant $c_{0}>0$ and all $u\in dom(Q_{L})$.

Clearly inequality \ref{claim} yield that%
\begin{align*}%
{\displaystyle\int}
\left\vert V\right\vert \left\vert u\right\vert ^{2}dm  &  \leq\left\Vert
W^{1/2}u\right\Vert _{2}^{2}+\left\Vert W^{\prime}\right\Vert _{\infty
}\left\Vert u\right\Vert _{2}^{2}\\
&  \leq\frac{1}{2}Q_{L}(u,u)+c_{1}\left\Vert u\right\Vert _{2}^{2}%
\end{align*}
for some constant $c_{1}>0$ and all $u\in dom(Q_{L})$. Thus for $c_{2}>2c_{1}$
we get%
\[
\frac{1}{2}\left\{  Q_{L}(u,u)+c_{2}\left\Vert u\right\Vert _{2}^{2}\right\}
\leq Q(u,u)+c_{2}\left\Vert u\right\Vert _{2}^{2}\leq\frac{3}{2}\left\{
Q_{L}(u,u)+c_{2}\left\Vert u\right\Vert _{2}^{2}\right\}  .
\]
It follows that the quadratic form $u\rightarrow Q(u,u)+c_{2}\left\Vert
u\right\Vert _{2}^{2}$ is non-negative and closed whence it is associated with
a non-negative self-adjoint operator, which is clearly equal to $H+c_{2}I$.

To prove the claim \ref{claim} we need some $L^{p}$-estimates. Recall that the
number $D=2/\gamma$ is called \emph{the spectral dimension }related to the
operator $\mathfrak{D}^{\gamma}$. The estimates $(E.1)$ and $(E.2)$ below are
similar to the classical estimates for the Hamiltonian $-\Delta$ in the
Eucledian space $\mathbb{R}^{D}$, see \cite[Sec. 3.6]{Davies}.

\begin{description}
\item[E1.] If $0<\alpha\leq1/(2\gamma)$ and $2\leq p<2/(1-2\alpha\gamma)$,
then $(\mathfrak{D}^{\gamma}+\mathrm{I})^{-\alpha}$ is a bounded linear
operator from $L^{2}(X,m)$ to $L^{p}(X,m)$. If $\alpha>1/(2\gamma)$, then
$(\mathfrak{D}^{\gamma}+\mathrm{I})^{-\alpha}$ is a bounded linear operator
from $L^{2}(X,m)$ to $L^{\infty}(X,m)$.

\item[E2.] If $0<\alpha\leq1/(2\gamma)$ and $\mathcal{W}\in L^{q}(X,m)$ is a
multiplication linear operator, then $\mathcal{A}:\mathcal{=W\cdot
}(\mathfrak{D}^{\gamma}+\lambda\mathrm{I})^{-\alpha}$ is a bounded linear
operator on $L^{2}(X,m)$ provided $1/(\alpha\gamma)<q\leq\infty$. Moreover,
there exists a constant $c>0$ such that $\left\Vert \mathcal{A}\right\Vert
_{L^{2}\rightarrow L^{2}}\leq c\left\Vert \mathcal{W}\right\Vert _{q}$ for all
such $\mathcal{W}$. The same bound holds in the case $\alpha>1/(2\gamma)$ and
$q=2$. In both cases the operator $\mathcal{A}$ is a compact operator on
$L^{2}$. Moreover, $\lim_{\lambda\rightarrow\infty}\left\Vert \mathcal{W\cdot
}(\mathfrak{D}^{\gamma}+\lambda\mathrm{I})^{-\alpha}\right\Vert _{L^{2}%
\rightarrow L^{2}}=0$.
\end{description}

Proof of the statement E1. Assume first that $0<\alpha\leq1/(2\gamma)$. If we
define the function $g(y):=(\left\Vert y\right\Vert _{p}^{\gamma}+1)^{-\alpha
}$ and assume that $1/(\alpha\gamma)<s\leq\infty$ then%
\[
\left\Vert g\right\Vert _{s}^{s}=%
{\displaystyle\int\limits_{\mathbb{Q}_{p}}}
\frac{dm(y)}{(\left\Vert y\right\Vert _{p}^{\gamma}+1)^{\alpha s}}=\left(
1-\frac{1}{p}\right)
{\displaystyle\sum\limits_{\tau=-\infty}^{\infty}}
\frac{p^{\tau}}{(p^{\tau\gamma}+1)^{\alpha s}}<\infty.
\]
If $k=\widehat{(\mathfrak{D}^{\gamma}+I)^{-\alpha}f}$ and $f\in L^{2}$, then
$k(y)=g(y)\widehat{f}(y)$. Putting $1/q=1/s+1/2$ we deduce that $1<q\leq2$ and%
\[
\left\Vert k\right\Vert _{q}\leq\left\Vert g\right\Vert _{q}\left\Vert
\widehat{f}\right\Vert _{2}=c_{1}\left\Vert f\right\Vert _{2}.
\]
If $1/p+1/q=1$, then $2\leq p<\infty$ and it follows from the Hausdorff-Young
theorem that%
\[
\left\Vert (\mathfrak{D}^{\gamma}+I)^{-\alpha}f\right\Vert _{p}=\left\Vert
\widehat{k}\right\Vert _{p}\leq\left\Vert k\right\Vert _{q}\leq c_{1}%
\left\Vert f\right\Vert _{2}.
\]
We have $1/p=1-1/q=1/2-1/s$ and $1/(\alpha\gamma)<s\leq\infty$, whence $p$
increases from $2$ to $2/(1-2\alpha\gamma)$ as $s$ decreases from $\infty$ to
$1/(\alpha\gamma)$.

If $\alpha>1/(2\gamma)$, then the function $g$ defined above lies in $L^{2}$
and we deduce that%
\[
\left\Vert k\right\Vert _{1}=\left\Vert g\widehat{f}\right\Vert _{1}%
\leq\left\Vert g\right\Vert _{2}\left\Vert \widehat{f}\right\Vert _{2}%
=c_{2}\left\Vert f\right\Vert _{2}%
\]
whence as above%
\[
\left\Vert (\mathfrak{D}^{\gamma}+I)^{-\alpha}f\right\Vert _{\infty
}=\left\Vert \widehat{k}\right\Vert _{\infty}\leq\left\Vert k\right\Vert
_{1}\leq c_{2}\left\Vert f\right\Vert _{2}%
\]
as desired.

Proof of the statement E2. If $0<\alpha\leq1/(2\gamma)$, then%
\[
\left\Vert \mathcal{W\cdot}(\mathfrak{D}^{\gamma}+I)^{-\alpha}f\right\Vert
_{2}\leq\left\Vert \mathcal{W}\right\Vert _{q}\left\Vert (\mathfrak{D}%
^{\gamma}+I)^{-\alpha}f\right\Vert _{p}%
\]
provided $1/2=1/p+1/q$. The condition $2\leq p<2/(1-2\alpha\gamma)$ is
equivalent to $1/(\alpha\gamma)<q\leq\infty$. We apply the statement E1 to get
the desired conclusion. The case $\alpha>1/(2\gamma)$ is similar,%
\[
\left\Vert \mathcal{W\cdot}(\mathfrak{D}^{\gamma}+I)^{-\alpha}f\right\Vert
_{2}\leq\left\Vert \mathcal{W}\right\Vert _{2}\left\Vert (\mathfrak{D}%
^{\gamma}+I)^{-\alpha}f\right\Vert _{\infty}.
\]
To prove compactness of the operator $\mathcal{A=W\cdot}(\mathfrak{D}^{\gamma
}+I)^{-\alpha}$ we choose a sequence $\mathcal{W}_{n}\in\mathcal{D}$ such that
$\mathcal{W}_{n}\rightarrow\mathcal{W}$ in $L^{q}$. Let $\Phi_{n}$ be a
strictly increasing function such that $\Phi_{n}(\tau)=\tau^{\gamma}$ for
$0\leq\tau\leq n$ and $\Phi_{n}(\tau)\asymp e^{\tau}$ as $\tau\rightarrow
\infty$. If we set $\mathcal{A}_{n}=$ $\mathcal{W}_{n}\mathcal{\cdot}(\Phi
_{n}(\mathfrak{D})+I)^{-\alpha}$ then $\mathcal{A}_{n}\rightarrow\mathcal{A}$
in the operator norm. Since the set of compact operators is closed under norm
limits, it is sufficient to prove that each $\mathcal{A}_{n}$ is a
Hilbert-Schmidt operator. Each operator $\mathcal{A}_{n}$ is unitary
equivalent to the integral operator $\widehat{\mathcal{A}_{n}}:\widehat
{u}\rightarrow\widehat{\mathcal{A}_{n}u}$ which has the kernel%
\[
\widehat{\mathcal{A}_{n}}(\theta,\zeta)=\widehat{\mathcal{W}_{n}}(\theta
-\zeta)(\Phi_{n}(\left\Vert \zeta\right\Vert )+1)^{-\alpha}:=\widehat
{\mathcal{W}_{n}}(\theta-\zeta)\mathcal{G(\zeta)}%
\]
so that the Hilbert-Schmidt norm $\left\Vert \widehat{\mathcal{A}_{n}%
}\right\Vert $of the operator $\widehat{\mathcal{A}_{n}}$ \ is%
\[
\left\Vert \widehat{\mathcal{A}_{n}}\right\Vert =\left\Vert \mathcal{W}%
_{n}\right\Vert _{2}\left\Vert \mathcal{G}\right\Vert _{2}<\infty.
\]
Thus the operator $\mathcal{A=W\cdot}(\mathfrak{D}^{\gamma}+I)^{-\alpha}$ is
endeed a compact operator.

Let us turn to the proof of the claim \ref{claim}. To prove the claim in the
case $0<\gamma\leq1$ and $p>1/\gamma$ we write%
\begin{align*}
\left\Vert W^{1/2}u\right\Vert _{2}^{2}  &  =\left\Vert W^{1/2}\cdot
(\mathfrak{D}^{\gamma}+I)^{-1/2}\cdot(\mathfrak{D}^{\gamma}+I)^{1/2}%
u\right\Vert _{2}^{2}\\
&  \leq\left\Vert W^{1/2}\cdot(\mathfrak{D}^{\gamma}+I)^{-1/2}\right\Vert
_{L^{2}\rightarrow L^{2}}^{2}\left\Vert (\mathfrak{D}^{\gamma}+I)^{1/2}%
u\right\Vert _{2}^{2}\\
&  =\left\Vert W^{1/2}\cdot(\mathfrak{D}^{\gamma}+I)^{-1/2}\right\Vert
_{L^{2}\rightarrow L^{2}}^{2}\left(  Q_{L}(u,u)+\left\Vert u\right\Vert
_{2}^{2}\right) \\
&  \leq c\left\Vert W^{1/2}\right\Vert _{q}^{2}\left(  Q_{L}(u,u)+\left\Vert
u\right\Vert _{2}^{2}\right)  \leq\frac{1}{2}Q_{L}(u,u)+c_{1}\left\Vert
u\right\Vert _{2}^{2}%
\end{align*}
provided $\varepsilon>0$ is chosen small enough and $q=2p>2/\gamma$ as in the
statement E2 with $\alpha=1/2$.

The case $\gamma>1$ is similar: The restriction $p>1/\gamma$ becomes $p\geq1.$
We set $Y=\{\left\vert V\right\vert >\tau\}$ and $W=\left\vert V\right\vert
1_{Y}$. By Markov inequality $m(Y)\leq\tau^{-p}\left\Vert V\right\Vert
_{p}^{p}<\infty$ whence $\left\Vert W\right\Vert _{1}=o(1)$ as $\tau
\rightarrow\infty$. In particular, $W^{1/2}\in L^{2}$ and $\left\Vert
W^{1/2}\right\Vert _{2}=o(1)$ as $\tau\rightarrow\infty$. Applying the second
part of the statement E2 with $\alpha=1/2$ and $q=2$ we come to the conclusion%
\begin{align*}
\left\Vert W^{1/2}u\right\Vert _{2}^{2}  &  \leq c\left\Vert W^{1/2}%
\right\Vert _{2}^{2}\left(  Q_{L}(u,u)+\left\Vert u\right\Vert _{2}^{2}\right)
\\
&  \leq\frac{1}{2}Q_{L}(u,u)+c_{1}\left\Vert u\right\Vert _{2}^{2},
\end{align*}
as desired.

To prove that $dom(H)=dom(\mathfrak{D}^{\gamma})$ we first write $V=V^{\prime
}+V^{\prime\prime}$, where $V^{\prime}\in L^{p}(X,m)$ and $V^{\prime\prime}\in
L^{\infty}(X,m)$. The statement E2 yields that%
\[
\lim_{t\rightarrow\infty}\left\Vert V^{\prime}\cdot(\mathfrak{D}^{\gamma
}+t\mathrm{I})^{-1}\right\Vert _{L^{2}\rightarrow L^{2}}=0.
\]
We also have%
\[
\left\Vert V^{\prime\prime}\cdot(\mathfrak{D}^{\gamma}+t\mathrm{I}%
)^{-1}\right\Vert _{L^{2}\rightarrow L^{2}}\leq\left\Vert V^{\prime\prime
}\right\Vert _{\infty}\left\Vert (\mathfrak{D}^{\gamma}+t\mathrm{I}%
)^{-1}\right\Vert _{L^{2}\rightarrow L^{2}}=t^{-1}\left\Vert V^{\prime\prime
}\right\Vert _{\infty}%
\]
for all $t>0$, so%
\[
\lim_{t\rightarrow\infty}\left\Vert V\cdot(\mathfrak{D}^{\gamma}%
+t\mathrm{I})^{-1}\right\Vert _{L^{2}\rightarrow L^{2}}=0.
\]
For any $1>\delta>0$ small enough we conclude that if $t>0$ is large enough
then%
\[
\left\Vert Vf\right\Vert _{2}\leq\delta\left\Vert \mathfrak{D}^{\gamma
}f\right\Vert _{2}+t\delta\left\Vert f\right\Vert _{2}%
\]
for all $f\in dom(\mathfrak{D}^{\gamma})$. Thus $V$ is a relatively bounded
perturbation of $\mathfrak{D}^{\gamma}$with a relative bound $\delta<1$ whence
$dom(\mathfrak{D}^{\gamma}+V)=dom(\mathfrak{D}^{\gamma})$ by an application of
\cite[Theorem 1.4.2]{Davies}. The proof is now completed
\end{proof}

Next we discuss several results giving information about the negative part of
the spectrum of $H$.

\begin{theorem}
\label{negative spec}Let $L=\mathfrak{D}^{\gamma}$ and let $V\in L^{p}(X,m)$
for some $p>1/\gamma$. Then:

1. The operator $H=L+V$ has essential spectrum equals to the spectrum of $L$.
In particular, if $H$ has any negative spectrum, then it consists of a
sequence of negative eigenvalues of finite multiplicity. If this sequence is
infinite then it converges to zero.

2. Suppose that there exists an open set $U\subset X$ on which $V$ is
negative. If $E_{\lambda}$ is the bottom of the spectrum of the operator
$H_{\lambda}=L+\lambda V$, then $E_{\lambda}\leq0$ for all $\lambda\geq0$ and
$\lim_{\lambda\rightarrow\infty}E_{\lambda}=-\infty$.
\end{theorem}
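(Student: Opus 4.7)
My plan for Part 1 is to show that $V$ is a form-compact perturbation of $L=\mathfrak{D}^{\gamma}$ and then invoke the Weyl-type stability of essential spectra. The key input is the compactness statement E2 used already in the proof of Theorem \ref{L^p-L^infty theorem}. Taking $\mathcal{W}=|V|^{1/2}\in L^{2p}(X,m)$ and $\alpha=1/2$, the inequality $2p>2/\gamma$ (equivalent to $p>1/\gamma$) places us in the first regime of E2 when $\gamma\le 1$; when $\gamma>1$ the hypothesis $p>1/\gamma$ already forces $V\in L^{1}$, hence $|V|^{1/2}\in L^{2}$, and the second regime of E2 applies. In either case $|V|^{1/2}(\mathfrak{D}^{\gamma}+tI)^{-1/2}$ is compact on $L^{2}$ with norm tending to $0$ as $t\to\infty$. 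The factorization
\[
(L+tI)^{-1/2}\,V\,(L+tI)^{-1/2}=\bigl(|V|^{1/2}(L+tI)^{-1/2}\bigr)^{*}\operatorname{sgn}(V)\bigl(|V|^{1/2}(L+tI)^{-1/2}\bigr)
\]
then produces a compact sandwiched operator, which by a standard resolvent-identity argument implies compactness of $(H+tI)^{-1}-(L+tI)^{-1}$ for $t$ large. Weyl's theorem yields $\sigma_{\mathrm{ess}}(H)=\sigma_{\mathrm{ess}}(L)$; since every eigenvalue of $L$ has infinite multiplicity, $\sigma_{\mathrm{ess}}(L)=\sigma(L)\subset[0,\infty)$. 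Consequently any negative spectrum of $H$ lies in the discrete part and is at most a sequence of eigenvalues of finite multiplicity whose only possible accumulation point is $0$.

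For Part 2, observe that $\lambda V$ satisfies the hypothesis of Part 1 for every $\lambda\ge 0$, so Part 1 applied to $H_{\lambda}=L+\lambda V$ gives $\sigma_{\mathrm{ess}}(H_{\lambda})=\sigma(L)\ni 0$, and hence $E_{\lambda}=\inf\sigma(H_{\lambda})\le 0$. To see $E_{\lambda}\to-\infty$, I would fix a ball $B'\subset U$ and a proper subball $B\subset B'$, and test with the eigenfunction $u=f_{B}$ from (\ref{eigenfunction}), which is supported in $B'\subset U$. The variational principle then gives
\[
E_{\lambda}\le\frac{Q_{L}(u,u)+\lambda\int V|u|^{2}\,dm}{\|u\|_{2}^{2}}=\lambda(B')-\lambda c_{u},
\]
where $c_{u}:=-\|u\|_{2}^{-2}\int_{B'}V|u|^{2}\,dm>0$ because $V<0$ on $U$. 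The right-hand side tends to $-\infty$ as $\lambda\to\infty$.

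The main obstacle is clean execution of the form-compactness step in Part 1: one must feed the $L^{p}$ hypothesis into the correct regime of E2, make the relative form bound arbitrarily small by sending $t\to\infty$, and pass from compactness of the sandwiched operator $(L+tI)^{-1/2}V(L+tI)^{-1/2}$ to compactness of the resolvent difference. Once this is in place, both parts reduce to direct applications of Weyl's theorem and the min-max principle.
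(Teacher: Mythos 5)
Your overall approach mirrors the paper's almost exactly: for Part~1 you show the resolvent difference is compact by feeding $|V|^{1/2}$ into the compactness statement E2 and then factoring, while the paper writes $\Delta=(L+cI)^{-1}V(H+cI)^{-1}=ABCDE$ with $B$ and $C$ compact by the very same E2; both then invoke Weyl's theorem. For Part~2 both proofs evaluate the Rayleigh quotient at a test function supported inside $U$; your choice of the explicit eigenfunction $f_{B}$ is a concrete instance of the paper's generic $u\in\mathcal{D}$, and your derivation of $E_{\lambda}\le 0$ from $0\in\sigma_{\mathrm{ess}}(H_{\lambda})=\sigma(L)$ makes explicit what the paper only gestures at.

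One genuine slip: for $\gamma>1$ you assert that $V\in L^{p}$ with $p>1/\gamma$ already forces $V\in L^{1}$, and hence $|V|^{1/2}\in L^{2}$. That implication is false on a space of infinite measure. For instance, with $\gamma=2$ one has $1/\gamma=1/2$, yet a potential like $V(x)=(1+\left\Vert x\right\Vert_{p})^{-3/4}$ lies in $L^{2}\setminus L^{1}$. The repair is easy and stays within the toolkit you already use: write
\[
|V|^{1/2}(L+tI)^{-1/2}=\Bigl(|V|^{1/2}(L+tI)^{-1/(2\gamma)}\Bigr)\,(L+tI)^{1/(2\gamma)-1/2},
\]
apply the first regime of E2 with $\alpha'=1/(2\gamma)$ and $q=2p>1/(\alpha'\gamma)=2$ to the bracket, and note that the remaining factor is a bounded power of the resolvent; alternatively, split $V$ at a level set $\{|V|>\tau\}$ of finite measure as the paper does in the proof of Theorem~\ref{L^p-L^infty theorem}. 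With this correction your argument is complete and matches the paper's proof in spirit and in the key lemmas invoked.
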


\begin{proof}
1. By Theorem \ref{L^p-L^infty theorem}, if $c>0$ is large enough then the
operator $H+cI$ is non-negative and%
\begin{equation}
\frac{1}{2}\left\Vert (L+cI)^{1/2}u\right\Vert _{2}\leq\left\Vert
(H+cI)^{1/2}u\right\Vert _{2}\leq\frac{3}{2}\left\Vert (L+cI)^{1/2}%
u\right\Vert _{2} \label{equiv QF}%
\end{equation}
for all $u\in dom(Q_{L})$. Let us define $\Delta:=(L+cI)^{-1}-(H+cI)^{-1}$,
then
\[
\Delta=(L+cI)^{-1}V(H+cI)^{-1}=ABCDE
\]
where $A=(L+cI)^{-1/2}$, $B=(L+cI)^{-1/2}\left\vert V\right\vert ^{1/2}$,
$C=sign(V)B^{\ast},D=(L+cI)^{1/2}(H+cI)^{-1/2}$ and $E=(H+cI)^{-1/2}$. It is
clear that $A$ and $E$ are bounded operators on $L^{2}$, $B^{\ast}$ and $C$
are compact operators on $L^{2}((X,m)$, see the statement E2 in the proof of
Theorem \ref{L^p-L^infty theorem}, and $D$ is a bounded operator on
$L^{2}(X,m)$ by equation (\ref{equiv QF}). Thus, as a product of compact and
bounded operators, the difference of two resolvents $\Delta$ is a compact
operator on $L^{2}$. By perturbation theory of linear operators, $H$ and $L$
have the same essential spectrum, see e.g. \cite{Kato}. Since $Spec_{ess}%
(L)=Spec(L)\subset\lbrack0,\infty\lbrack$, any negative point in the spectrum
of $H$ must be an isolated eigenvalue of finite multiplicity. Any limit of
negative eigenvalues lies in the essential spectrum whence the only possible
limit is zero.

2. We use the first statement to prove that $E_{\lambda}\leq0$ for all
$\lambda\geq0$. Observe that%
\begin{equation}
E_{\lambda}=\inf\{Q_{L}(u,u)+\lambda Q_{V}(u,u):u\in\mathcal{D}\text{ and
}\left\Vert u\right\Vert _{2}=1\} \label{variation formula}%
\end{equation}
because $\mathcal{D}$ is a core for $Q_{L}+\lambda Q_{V}$. Let us choose
$u\in\mathcal{D}$ having support in the set $U$, then as $\lambda
\rightarrow\infty$ we get%
\begin{align*}
E_{\lambda}  &  \leq Q_{L}(u,u)+\lambda Q_{V}(u,u)\\
&  =Q_{L}(u,u)-\lambda\int_{U}|V|\left\vert u\right\vert ^{2}dm\rightarrow
-\infty
\end{align*}
as was claimed.
\end{proof}

\subsection{The positive spectrum}

We prove here criteria for positivity of the spectum of the operator $H=L+V$.
We use the notion of \emph{the} \emph{square of gradient }$\Gamma(u,v)$
defined as follows: for all $u,v\in\mathcal{D}$ we set
\begin{equation}
\Gamma(u,v):=\frac{1}{2}\left\{  uLv+vLu-L(uv)\right\}  . \label{sq of grad}%
\end{equation}
It is straightforward to show that the following identities hold true:
\begin{equation}
Q_{L}(u,v)=\int_{X}\Gamma(u,v)dm,\text{ } \label{sq of grad 1}%
\end{equation}

\begin{equation}
Q_{L}(uv,w)=\int_{X}v\Gamma(u,w)dm+\int_{X}u\Gamma(v,w)dm,
\label{sq of grad 2}%
\end{equation}%
\begin{align}
&  \int_{X}v\Gamma(u^{2},w)dm-2\int_{X}vu\Gamma(u,w)dm\label{sq of grad 3}\\
&  =\frac{1}{2}\int_{X\times X}\left(  u(y)-u(x)\right)  ^{2}\left(
w(y)-w(x)\right)  \left(  v(y)-v(x)\right)  J(x-y)dm(x)dm(y).\nonumber
\end{align}
Here $J(x-y)$ is the jump kernel associated with the (non-local) operator $L$,
see equations (\ref{Levy generator}) and (\ref{Levy symbol}). In particular,
we have%
\begin{align}
&  \int_{X}w\Gamma(u^{2},w)dm-2\int_{X}wu\Gamma(u,w)dm\label{sq of grad 4}\\
&  =\frac{1}{2}\int_{X\times X}\left(  u(y)-u(x)\right)  ^{2}\left(
w(y)-w(x)\right)  ^{2}J(x-y)dm(x)dm(y)\geq0.\nonumber
\end{align}
The identities listed above can be extended to the set of all bounded
functions $u,v$ and $w$ from $dom(Q_{L})$. We refer to \cite[Sec.
5]{Fukushima}.

The operator $(L,\mathcal{D})$ can be extended to each of the Banach spaces
$C_{\infty}(X)$ and $L^{q}(X,m),$ $1\leq q<\infty,$ as minus Markov generator.
The extended operators we denote $L_{\infty}$ and $L_{q}$ respectively.

\begin{theorem}
\label{negative spectrum}Assume that the quadratic form $Q=Q_{L}+Q_{V}$
defines a bounded below self-adjoint operator $H$ (see e.g. Theorem
\ref{L^p-L^infty theorem}). If there exists a function $0<f\in dom(L_{\infty
})$ such that the inequality
\[
V(x)\geq-\frac{Lf(x)}{f\left(  x\right)  }%
\]
holds almost everywhere, then $Spec(H)\subseteq\lbrack0,\infty).$

\begin{proof}
Let us assume first that $f$ is a locally constant function. Let us put
$W_{f}:=(-Lf)/f$ and let $\varphi\in\mathcal{D}$. If we put $\psi
:=\varphi/f\in\mathcal{D}$, then using equations (\ref{sq of grad 1}%
)-(\ref{sq of grad 4}) we get%
\begin{align*}
Q(\varphi,\varphi) &  =\int_{X}(\varphi L\varphi+V\varphi^{2})dm\geq\int
_{X}(\varphi L\varphi+W_{f}\varphi^{2})dm\\
&  =\int_{X}(L\varphi+W_{f}\varphi)\varphi dm=\int_{X}(\psi Lf-2\Gamma
(f,\psi)+fL\psi+W_{f}f\psi)f\psi dm.
\end{align*}
Since $Lf+W_{f}f=0$ the right-hand side $RHS$ of the inequality from above can
be written as%
\begin{align*}
RHS &  =\int_{X}(-2\psi f\Gamma(f,\psi)+f^{2}\psi L\psi)dm\\
&  =\int_{X}-2\psi f\Gamma(f,\psi)dm+Q_{L}(f^{2}\psi,\psi).
\end{align*}
It follows that%
\begin{align*}
Q(\varphi,\varphi) &  \geq\int_{X}-2\psi f\Gamma(f,\psi)dm+Q_{L}(f^{2}%
\psi,\psi)\\
&  =\int_{X}\{-2\psi f\Gamma(f,\psi)+f^{2}\Gamma(\psi,\psi)+\psi\Gamma
(f^{2},\psi)\}dm\\
&  =\int_{X}f^{2}\Gamma(\psi,\psi)dm+\int_{X}\{-2\psi f\Gamma(f,\psi
)+\psi\Gamma(f^{2},\psi)\}dm\\
&  \geq\int_{X}f^{2}\Gamma(\psi,\psi)dm\geq0.
\end{align*}
Thus $Q(\varphi,\varphi)\geq0$ for all $\varphi\in\mathcal{D}$. Since such
functions $\varphi$ form a core for $Q$, the result follows by an application
of the variational formula (\ref{variation formula}). 

In general one can choose a sequence of locally constant functions $f_{n}$
such that $W_{f_{n}}\rightarrow W_{f}$ locally uniformly in $X$, for instance
one can choose a $\delta$-sequence $\phi_{n}\in\mathcal{D}_{+}$ and set
$f_{n}:=f\ast\phi_{n}$. Then setting $\psi_{n}:=\varphi/f_{n}$ we get
\begin{align*}
Q(\varphi,\varphi)  & =\int_{X}(\varphi L\varphi+V\varphi^{2})dm\geq\int
_{X}(\varphi L\varphi+W_{f}\varphi^{2})dm\\
& =\lim_{n\rightarrow\infty}\int_{X}(\varphi L\varphi+W_{f_{n}}\varphi
^{2})dm\geq\limsup_{n\rightarrow\infty}\int_{X}f_{n}^{2}\Gamma(\psi_{n}%
,\psi_{n})dm\geq0.
\end{align*}
The proof of the theorem is finished.
\end{proof}
\end{theorem}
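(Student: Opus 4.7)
The plan is to run the nonlocal analogue of the ground state substitution (Agmon--Allegretto--Piepenbrink). Since $\mathcal{D}$ is a core for $Q$ and the bottom of the spectrum of $H$ is given by the variational formula (\ref{variation formula}), it suffices to show $Q(\varphi,\varphi)\geq 0$ for every $\varphi\in\mathcal{D}$. Using the pointwise bound $V(x)\geq -Lf(x)/f(x)=:W_{f}(x)$, this reduces to proving
\[
\int_{X}\bigl(\varphi L\varphi+W_{f}\varphi^{2}\bigr)\,dm\geq 0.
\]

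First I would treat the case in which $f$ itself is locally constant and strictly positive, so that $\psi:=\varphi/f$ again lies in $\mathcal{D}$. From the definition (\ref{sq of grad}) one reads off the nonlocal Leibniz rule $L(f\psi)=\psi Lf+fL\psi-2\Gamma(f,\psi)$. Multiplying by $\varphi=f\psi$ and integrating, the contribution of $\psi Lf$ cancels against $W_{f}f^{2}\psi^{2}$ because $Lf+W_{f}f=0$, leaving
\[
\int_{X}\bigl(\varphi L\varphi+W_{f}\varphi^{2}\bigr)\,dm = Q_{L}(f^{2}\psi,\psi)-2\int_{X} f\psi\,\Gamma(f,\psi)\,dm.
\]
Expanding the first term via the product rule (\ref{sq of grad 2}) gives $\int_{X} f^{2}\Gamma(\psi,\psi)\,dm\geq 0$ together with $\int_{X}\psi\,\Gamma(f^{2},\psi)\,dm$. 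The remaining combination $\int_{X}\psi\,\Gamma(f^{2},\psi)\,dm-2\int_{X} f\psi\,\Gamma(f,\psi)\,dm$ is exactly the left hand side of the jump-kernel identity (\ref{sq of grad 3}) with $u=f$ and $v=w=\psi$, hence equals
\[
\tfrac{1}{2}\!\int_{X\times X}\bigl(f(y)-f(x)\bigr)^{2}\bigl(\psi(y)-\psi(x)\bigr)^{2}J(x-y)\,dm(x)dm(y)\geq 0,
\]
which closes the estimate in the locally constant case.

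For a general $f\in dom(L_{\infty})$ the obstacle is that $\psi=\varphi/f$ need not belong to $\mathcal{D}$, so the algebra above is not literally available. I would overcome this by mollification: pick a $\delta$-sequence $\phi_{n}\in\mathcal{D}_{+}$ (for instance normalized Haar measures of a shrinking chain of compact subgroups) and set $f_{n}:=f\ast\phi_{n}$. Each $f_{n}$ is locally constant and strictly positive on any fixed ball for large $n$, while $Lf_{n}=(Lf)\ast\phi_{n}\to Lf$ locally uniformly (using $Lf\in C_{\infty}(X)$). Hence $W_{f_{n}}\to W_{f}$ uniformly on the compact support of any fixed $\varphi\in\mathcal{D}$. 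Applying the locally constant case to each $f_{n}$ and passing to the limit yields $\int_{X}(\varphi L\varphi+W_{f}\varphi^{2})\,dm\geq 0$, which completes the proof.

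The step I expect to be most delicate is the approximation argument: one has to check that convolution with $\phi_{n}$ commutes with $L_{\infty}$ (so that $f_{n}\in dom(L_{\infty})$ with explicit action) and that $W_{f_{n}}$ converges to $W_{f}$ in a mode strong enough to pass to the limit in the quadratic integral supported on the (compact) support of $\varphi$. The algebraic heart of the proof --- the cancellation produced by $Lf+W_{f}f=0$ followed by the sign-positive rearrangement via (\ref{sq of grad 2}) and (\ref{sq of grad 3}) --- is essentially forced once the correct nonlocal product and chain rules for $\Gamma$ are identified.
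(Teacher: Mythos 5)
Your argument is essentially identical to the paper's: the same ground-state substitution $\varphi=f\psi$, the same cancellation via $Lf+W_{f}f=0$, the same use of the product rule (\ref{sq of grad 2}) together with the sign identity (\ref{sq of grad 3})/(\ref{sq of grad 4}) with $u=f$, $v=w=\psi$ to isolate the nonnegative term $\int_{X}f^{2}\Gamma(\psi,\psi)\,dm$, and the same mollification $f_{n}=f\ast\phi_{n}$ to pass from locally constant $f$ to general $f\in dom(L_{\infty})$. No substantive difference from the paper's proof.
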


\begin{corollary}
\label{positive spectrum}Assume that $0<\alpha<1$ and that the following
inequality%
\[
V_{-}(x)\leq\left(  \Gamma_{p}\left(  \frac{1+\alpha}{2}\right)  \right)
^{2}\left\Vert x\right\Vert _{p}^{-\alpha}%
\]
holds almost everywhere, then
\[
Spec(\mathfrak{D}^{\alpha}+V)\subseteq\lbrack0,\infty).
\]

\end{corollary}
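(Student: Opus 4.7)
The plan is to produce a positive ``ground state'' for the critical operator $\mathfrak{D}^\alpha - \Gamma_p((1+\alpha)/2)^2 \left\Vert x\right\Vert _p^{-\alpha}$ and then invoke Theorem \ref{negative spectrum}. The natural candidate is the power function
\[
f(x) = \left\Vert x\right\Vert _p^{(\alpha-1)/2},
\]
which is strictly positive and locally integrable on $\mathbb{Q}_p$ since $(\alpha-1)/2 > -1$. Its role is dictated by the $p$-adic Tate identity $\widehat{\left\Vert \cdot \right\Vert _p^{s-1}}(\xi) = \Gamma_p(s)\,\left\Vert \xi\right\Vert _p^{-s}$: combining it with the multiplier definition $\widehat{\mathfrak{D}^\alpha u}(\xi) = \left\Vert \xi\right\Vert _p^\alpha\,\widehat{u}(\xi)$ and the functional equation $\Gamma_p(z)\Gamma_p(1-z)=1$ yields the pointwise (off the origin) identity
\[
\mathfrak{D}^\alpha\!\left(\left\Vert x\right\Vert _p^{s-1}\right) = \Gamma_p(s)\,\Gamma_p(\alpha-s+1)\,\left\Vert x\right\Vert _p^{s-1-\alpha}.
\]
The symmetric choice $s=(1+\alpha)/2$ equates the two Gamma arguments and delivers
\[
\frac{\mathfrak{D}^\alpha f(x)}{f(x)} = \Gamma_p\!\left(\tfrac{1+\alpha}{2}\right)^{2}\left\Vert x\right\Vert _p^{-\alpha},
\]
so that the hypothesis $V_- \leq \Gamma_p((1+\alpha)/2)^2 \left\Vert x\right\Vert _p^{-\alpha}$ is exactly $V \geq -\mathfrak{D}^\alpha f/f$ almost everywhere, i.e.\ the pointwise inequality demanded by Theorem \ref{negative spectrum}.

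The main obstacle is that this $f$ is unbounded near $0$ and does not vanish at the ideal point $\varpi$, hence $f \notin \operatorname{dom}(L_\infty)$ and Theorem \ref{negative spectrum} cannot be applied literally. I would dispose of this by repeating the approximation device already used in that theorem's own proof. Introduce the locally constant, strictly positive, bounded truncations
\[
f_n(x) = \max\!\left\{p^{-n(1-\alpha)/2},\ \min\!\left\{p^{n(1-\alpha)/2},\ \left\Vert x\right\Vert _p^{(\alpha-1)/2}\right\}\right\},
\]
which agree with $f$ on the annulus $\{p^{-n}\leq \left\Vert x\right\Vert _p \leq p^n\}$. Using the Fourier calculation above together with the explicit eigenequation $\mathfrak{D}^\alpha\mathbf{1}_B = \lambda(B)\mathbf{1}_B$ on indicators of $p$-adic balls $B$ centred at $0$, a direct computation shows that $W_{f_n}:=-\mathfrak{D}^\alpha f_n/f_n$ converges to $W_f = -\Gamma_p((1+\alpha)/2)^2\left\Vert x\right\Vert _p^{-\alpha}$ locally uniformly on $\mathbb{Q}_p \setminus \{0\}$.

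To finish I would run the central estimate in the proof of Theorem \ref{negative spectrum} with $f_n$ in place of $f$: for any $\varphi \in \mathcal{D}$ whose support avoids $0$, setting $\psi_n := \varphi/f_n \in \mathcal{D}$ and invoking the square-of-gradient identities (\ref{sq of grad 1})--(\ref{sq of grad 4}) one obtains
\[
\int_X \varphi\,\mathfrak{D}^\alpha\varphi\,dm + \int_X W_{f_n}\varphi^2\,dm \;\geq\; \int_X f_n^{2}\,\Gamma(\psi_n,\psi_n)\,dm \;\geq\; 0.
\]
The uniform convergence $W_{f_n}\to W_f$ on $\operatorname{supp}\varphi$ lets one pass to the limit, and the bound $V \geq W_f$ then upgrades the inequality to $Q(\varphi,\varphi)\geq 0$. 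General $\varphi \in \mathcal{D}$ are handled by multiplying with a cutoff vanishing near $0$: since $V_- \leq \mathrm{const}\cdot\left\Vert x\right\Vert _p^{-\alpha}$ is locally integrable on $\mathbb{Q}_p$ for $0<\alpha<1$, the resulting $Q_1$-error tends to $0$ by dominated convergence. As $\mathcal{D}$ is a core for $Q$ by Theorem \ref{L^1_loc theorem} (applied to $V_+$, the negative singular part being absorbed by the hypothesis), the variational formula (\ref{variation formula}) yields $\operatorname{Spec}(\mathfrak{D}^\alpha + V) \subseteq [0,\infty)$, as claimed.
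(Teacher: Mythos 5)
Your argument follows the same essential path as the paper's: you identify the critical power $f(x)=\left\Vert x\right\Vert _{p}^{(\alpha-1)/2}$ as the candidate ground state, derive $\mathfrak{D}^{\alpha}f/f=\Gamma_{p}((1+\alpha)/2)^{2}\left\Vert x\right\Vert _{p}^{-\alpha}$ from the Tate/$\Gamma_{p}$ Fourier identity (this is precisely the paper's equation (\ref{D^alpha/betta identity}) specialized to $\beta=(\alpha-1)/2$), observe that $f\notin\mathrm{dom}(L_{\infty})$, regularize, rerun the inner estimate of Theorem \ref{negative spectrum}, and pass to the limit. The only genuine divergence from the paper is the regularization device: the paper mollifies, setting $f_{n}:=\mathfrak{u}_{\beta}\ast\phi_{n}$ with $\phi_{n}=\mathbf{1}_{B_{n}}/m(B_{n})$, while you truncate $f$ from above and below. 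The mollified $f_{n}$ is bounded, strictly positive and vanishing at $\varpi$, hence in $\mathrm{dom}(L_{\infty})$, so the paper can invoke Theorem \ref{negative spectrum} as a black box and then apply Fatou's lemma to $W_{\phi_{n}}\rightarrow W$; this avoids your separate cutoff argument near $0$ and is slightly cleaner. Your truncated $f_{n}$ is bounded but equals a nonzero constant at $\varpi$, so it is not in $C_{\infty}(X)$ and the theorem itself does not apply verbatim; you correctly respond by reproducing its internal quadratic-form identity with $f_{n}$ and $\psi_{n}=\varphi/f_{n}\in\mathcal{D}$. A by-product of your truncation is that $f_{n}$ is bounded below, so in fact $\varphi/f_{n}\in\mathcal{D}$ for \emph{all} $\varphi\in\mathcal{D}$, not merely those supported off the origin; the restriction is only needed to control the convergence $W_{f_{n}}\to W_{f}$, and the subsequent cutoff is the price paid for it, where the paper pays with Fatou instead.

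Two small corrections. First, you assert that $f$ ``does not vanish at the ideal point $\varpi$''; since $(\alpha-1)/2<0$ for $0<\alpha<1$, in fact $f(x)\rightarrow0$ as $\left\Vert x\right\Vert _{p}\rightarrow\infty$. The sole obstruction to $f\in\mathrm{dom}(L_{\infty})$ is the blow-up of $f$ (and of $Lf=\Gamma_{p}((1+\alpha)/2)^{2}\left\Vert x\right\Vert _{p}^{-(1+\alpha)/2}$) at the origin. Second, your final appeal to Theorem \ref{L^1_loc theorem} applied to $V_{+}$ ``with the negative singular part absorbed by the hypothesis'' is left rather implicit; the paper is equally terse here (``for reasons which depend upon which assumption we make on $V$''), so this is not a gap relative to the source, but it is the weakest link in both arguments and merits an explicit supplementary hypothesis on $V_{+}$ if you were to write this out fully.
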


\begin{proof}
Let us set $\mathfrak{u}_{\beta}(x):=\left\Vert x\right\Vert _{p}^{\beta}$. By
\cite[Sec. 8.1, Eq. (1.6)]{Vladimirov94}, the function $\mathfrak{u}_{\beta}$
defines a distribution (a generalized function) which is holomorphic on
$\beta$ everywhere on the real line. The operator $\mathfrak{D}^{\alpha}%
:\psi\rightarrow\mathfrak{D}^{\alpha}\psi$ can be defined as convolution of
distributions $\mathfrak{u}_{-\alpha-1}/\Gamma_{p}(-\alpha)$ and $\psi,$ see
\cite[Sec. 9]{Vladimirov94}.

We claim that for all $\beta\neq\alpha$,
\begin{equation}
\frac{\mathfrak{D}^{\alpha}\mathfrak{u}_{\beta}}{\mathfrak{u}_{\beta}}%
=\frac{\Gamma_{p}(\beta+1)}{\Gamma_{p}(\beta+1-\alpha)}\mathfrak{u}_{-\alpha}.
\label{D^alpha/betta identity}%
\end{equation}
The case $\beta=0$ is trivial. For $\beta\neq0$ we apply the Fourier transform
argument. Remind that the Fourier transform $f\rightarrow\widehat{f}$ is a
linear isomorphism of $\mathcal{D}^{\prime}\rightarrow\mathcal{D}^{\prime}$.
By virtue of the results of \cite[Sec. 7.5]{Vladimirov94}, the equation
\begin{equation}
\widehat{\mathfrak{u}_{\gamma-1}}(\xi)=\Gamma_{p}(\gamma)\mathfrak{u}%
_{-\gamma}(\xi) \label{vladimirov identity}%
\end{equation}
holds true for all $\gamma\neq1$. Applying equation (\ref{vladimirov identity}%
) we obtain%
\begin{align*}
\widehat{\mathfrak{D}^{\alpha}\mathfrak{u}_{\beta}}(\xi)  &  =\mathfrak{u}%
_{\alpha}(\xi)\widehat{\mathfrak{u}_{\beta}}(\xi)=\mathfrak{u}_{\alpha}%
(\xi)\widehat{\mathfrak{u}_{\beta+1-1}}(\xi)\\
&  =\mathfrak{u}_{\alpha}(\xi)\Gamma_{p}(\beta+1)\mathfrak{u}_{-\beta-1}%
(\xi)=\Gamma_{p}(\beta+1)\mathfrak{u}_{-(1+\beta-\alpha)}(\xi)\\
&  =\frac{\Gamma_{p}(\beta+1)}{\Gamma_{p}(\beta+1-\alpha)}\Gamma_{p}%
(\beta+1-\alpha)\mathfrak{u}_{-(1+\beta-\alpha)}(\xi)\\
&  =\frac{\Gamma_{p}(\beta+1)}{\Gamma_{p}(\beta+1-\alpha)}\widehat
{\mathfrak{u}_{(1+\beta-\alpha)-1}}(\xi)=\frac{\Gamma_{p}(\beta+1)}{\Gamma
_{p}(\beta+1-\alpha)}\widehat{\mathfrak{u}_{\beta-\alpha}}(\xi),
\end{align*}
so by the unicity theorem the desired result follows.

For $\phi\in\mathcal{D}_{+}$ and $\beta:=(\alpha-1)/2$ we define the following
function%
\[
W_{\phi}:=\frac{\Gamma_{p}(\beta+1)}{\Gamma_{p}(\beta+1-\alpha)}%
\frac{\mathfrak{u}_{\beta-\alpha}\ast\phi}{\mathfrak{u}_{\beta}\ast\phi
}=\left(  \Gamma_{p}\left(  \frac{1+\alpha}{2}\right)  \right)  ^{2}%
\frac{\mathfrak{u}_{-\frac{1+\alpha}{2}}\ast\phi}{\mathfrak{u}_{-\frac
{1-\alpha}{2}}\ast\phi}.\text{ }%
\]
Equation (\ref{D^alpha/betta identity}) shows that $W_{\phi}$ belongs to
$C_{\infty}(X)$ and $W_{\phi}=Lf/f$ for some $0<f\in dom(L_{\infty})$, so
applying Theorem \ref{negative spectrum} we get
\[
Q_{W_{\phi}}(\varphi,\varphi)\leq Q_{\mathfrak{D}^{\alpha}}(\varphi,\varphi),
\]
for all $\varphi\in\mathcal{D}$. Let us choose a sequence $\{B_{n}%
:n=1,2,...\}$ of balls centred at the neutral element $0$ such that
$\cap_{n=1}^{\infty}B_{n}=\{0\}$ and set $\phi_{n}=1_{B_{n}}/m(B_{n})$.
Clearly $\phi_{n}\ast f$ converges to $f$ for any continuous function $f$,
whence
\[
W_{\phi_{n}}(x)\rightarrow W(x)=\left(  \Gamma_{p}\left(  \frac{1+\alpha}%
{2}\right)  \right)  ^{2}\frac{\mathfrak{u}_{-\frac{1+\alpha}{2}}%
(x)}{\mathfrak{u}_{-\frac{1-\alpha}{2}}(x)}=\left(  \Gamma_{p}\left(
\frac{1+\alpha}{2}\right)  \right)  ^{2}\left\Vert x\right\Vert _{p}^{-\alpha
}.
\]
Applying now Fatou lemma we conclude that for all $\varphi\in\mathcal{D}$,%
\[
Q_{W}(\varphi,\varphi)\leq Q_{\mathfrak{D}^{\alpha}}(\varphi,\varphi).
\]
It follows that for all $\varphi\in\mathcal{D}$,%
\[
-Q_{V}(\varphi,\varphi)\leq Q_{V_{-}}(\varphi,\varphi)\leq Q_{W}%
(\varphi,\varphi)\leq Q_{\mathfrak{D}^{\alpha}}(\varphi,\varphi),
\]
or equivalently,%
\[
Q(\varphi,\varphi):=Q_{\mathfrak{D}^{\alpha}}(\varphi,\varphi)+Q_{V}%
(\varphi,\varphi)\geq0.
\]
The set $\mathcal{D}$ forms a core for $Q(\varphi,\varphi)$, for reasongs
which depend upon which assumption we make on $V$, and the proof is completed
by an application of the variational formula (\ref{variation formula}).
\end{proof}

The following results show that the crucial issue for the existence of
negative eigenvalues in Theorem \ref{negative spec} for all $\lambda>0$ is the
rate at which the potential $V(x)$ converges to $0$ as $\left\Vert
x\right\Vert _{p}\rightarrow\infty$.

\begin{example}
\label{threshold}Let $0<\alpha<1$ and let $H_{\lambda}=\mathfrak{D}^{\alpha
}-\lambda V$ where $V(x)=(\left\Vert x\right\Vert _{p}+1)^{-\beta}$ for some
$0<\beta<1$ and $\lambda>0.$ If $\beta\geq\alpha$ then Theorem
\ref{negative spec} \ and Corollary \ref{positive spectrum} are applicable and
there exists a positive threshold for the existence of negative eigenvalues of
$H_{\lambda}$. If $0<\beta<\alpha$ the result is totally different.
\end{example}

\begin{theorem}
\label{no threshold} In the notation of Example \ref{threshold} assume that
$0<\beta<\alpha$, then $H_{\lambda}$ has non-empty negative spectrum for all
$\lambda>0$.
\end{theorem}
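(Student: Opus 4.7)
The plan is to use the variational principle together with a trial function argument. Since $V(x)=(\|x\|_p+1)^{-\beta}$ is bounded and belongs to $L^p(\mathbb{Q}_p,m)$ for every $p>1/\beta>1/\alpha$, Theorem \ref{L^p-L^infty theorem} applies to define the self-adjoint operator $H_\lambda$, and Theorem \ref{negative spec} tells us that the essential spectrum of $H_\lambda$ equals $\mathrm{Spec}(\mathfrak{D}^\alpha)\subset [0,\infty)$. Consequently, it suffices to exhibit, for every $\lambda>0$, some test function $u\in\mathcal{D}$ with $\|u\|_2=1$ such that
\[
Q_{\mathfrak{D}^\alpha}(u,u)-\lambda\!\int_{\mathbb{Q}_p}\!V|u|^2\,dm<0,
\]
since the variational formula (\ref{variation formula}) then yields $E_\lambda<0$, and this negative value must be an isolated eigenvalue of finite multiplicity by the essential spectrum statement.

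The natural family of trial functions are the normalized indicators of large balls. Concretely, I would take $B_R=p^{-n}\mathbb{Z}_p$ with $R=p^n$ and $u_R:=\mathbf{1}_{B_R}/\sqrt{m(B_R)}$, which lies in $\mathcal{D}$ and has $L^2$-norm $1$. First I would estimate the kinetic term: using the jump-kernel representation (\ref{alpha_jump_kernel}) together with the ultrametric identity $\|x-y\|_p=\|y\|_p$ valid whenever $x\in B_R$ and $y\notin B_R$, one computes
\[
Q_{\mathfrak{D}^\alpha}(\mathbf{1}_{B_R},\mathbf{1}_{B_R})
=\frac{m(B_R)}{|\Gamma_p(-\alpha)|}\!\int_{B_R^c}\!\frac{dm(y)}{\|y\|_p^{1+\alpha}}
\asymp R^{1-\alpha},
\]
so that $Q_{\mathfrak{D}^\alpha}(u_R,u_R)\asymp R^{-\alpha}$. (Equivalently, one could invoke the two-sided bound $m(B)\lambda(B')\le Q_{\mathfrak{D}^\alpha}(\mathbf{1}_B,\mathbf{1}_B)\le 2m(B)\lambda(B')$ from Step 2 of the proof of Theorem \ref{L^1_loc theorem} combined with $\lambda(B')=(p/m(B'))^\alpha$.) Next, splitting the integral according to spheres $\|x\|_p=p^k$ of measure $p^k(1-1/p)$, one finds
\[
\int_{B_R}\!V\,dm
=\sum_{k\le n}\!p^k(1-1/p)(p^k+1)^{-\beta}\asymp R^{1-\beta},
\]
the dominant contribution coming from the outer shells $k$ close to $n$, and using $\beta<1$. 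Therefore
\[
\int_{\mathbb{Q}_p}\!V|u_R|^2\,dm\asymp R^{-\beta}.
\]

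Combining these two asymptotics,
\[
Q_{\mathfrak{D}^\alpha}(u_R,u_R)-\lambda\!\int\!V|u_R|^2\,dm
=O(R^{-\alpha})-c\lambda R^{-\beta}
=R^{-\beta}\!\left(O(R^{-(\alpha-\beta)})-c\lambda\right).
\]
Because $\alpha-\beta>0$, the bracket becomes negative once $R$ is large enough (depending on $\lambda$), and this forces $E_\lambda<0$. The main obstacle is nothing conceptual but the book-keeping for the two estimates; in particular one must be careful that the spherical-shell computation of $\int_{B_R}V\,dm$ really picks up the shell at $\|x\|_p=p^n$, which requires the assumption $\beta<1$ (so that the geometric series in $p^{(1-\beta)k}$ is dominated by its last term). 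Once this is done, applying Theorem \ref{negative spec}.1 converts $E_\lambda<0$ into the existence of at least one genuine negative eigenvalue, completing the proof.
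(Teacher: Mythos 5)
Your proof is correct, and it takes a genuinely different route from the paper's. The paper chooses the trial function $f=\mathfrak{D}^{-\alpha}\mathbf{1}_{B}$, computes $W:=(\mathfrak{D}^{\alpha}f)/f=\tfrac{p-p^{\alpha}}{p-1}\,\mathbf{1}_{B}/\mathrm{diam}(B)^{\alpha}$ explicitly, and then observes that for $B$ large enough one has the pointwise bound $W(x)<\lambda V(x)$ on all of $\mathbb{Q}_{p}$ (because $W$ scales like $\mathrm{diam}(B)^{-\alpha}$ on its support while $V$ is bounded below there by a constant times $\mathrm{diam}(B)^{-\beta}$, and $\beta<\alpha$); the identity $\mathfrak{D}^{\alpha}f=Wf$ then forces $Q_{\lambda}(f,f)<(\mathfrak{D}^{\alpha}f,f)-(Wf,f)=0$ without ever normalizing $f$. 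Your argument instead takes the normalized indicator $u_{R}=\mathbf{1}_{B_{R}}/\sqrt{m(B_{R})}$ and estimates the two pieces of the Rayleigh quotient directly, getting $Q_{\mathfrak{D}^{\alpha}}(u_{R},u_{R})\asymp R^{-\alpha}$ versus $\int V|u_{R}|^{2}\,dm\asymp R^{-\beta}$, and then lets $R\to\infty$. Both exploit exactly the same scaling discrepancy $R^{-\alpha}\ll R^{-\beta}$, but your version is more elementary (no need for the resolvent $\mathfrak{D}^{-\alpha}$ or the pointwise comparison of two potentials), stays entirely inside $\mathcal{D}$ rather than inside $\mathrm{dom}(\mathfrak{D}^{\alpha})$, and is the more standard "test-function in the Rayleigh quotient" argument; the paper's version buys a cleaner one-line strict inequality and is in the same spirit as the positivity criterion of Theorem \ref{negative spectrum}. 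One small point worth keeping in mind: the theorem statement only asks for non-empty negative spectrum, which is precisely $E_{\lambda}<0$; the upgrade to an isolated eigenvalue of finite multiplicity via Theorem \ref{negative spec}.1 is a welcome bonus but not logically required. Also note that $\beta<1$, which you correctly flag as needed for the shell sum, is automatic here since $\beta<\alpha<1$.
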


\begin{proof}
Let $f:=\mathfrak{D}^{-\alpha}1_{B}$ where $B$ is a ball centred at the
neutral element which we will specify later. The function $f$ \ belongs to
$\mathrm{dom}(\mathfrak{D}^{\alpha})$ and calculations based on the spectral
resolution formula and equation (\ref{Lambda-Phi eigenvalue}) show that%
\begin{align*}
\mathfrak{D}^{-\alpha}1_{B}/m(B)  &  =\mathfrak{D}^{-\alpha}\sum_{T:\text{
}B\subseteq T}f_{T}=\sum_{T:\text{ }B\subseteq T}\mathfrak{D}^{-\alpha}f_{T}\\
&  =\sum_{T:\text{ }B\subseteq T}\left(  \frac{m(T^{\prime})}{p}\right)
^{\alpha}f_{T}=\sum_{T:\text{ }B\subseteq T}m(T)^{\alpha}\left(  \frac{1_{T}%
}{m(T)}-\frac{1_{T^{\prime}}}{m(T^{\prime})}\right) \\
&  =m(B)^{\alpha-1}\sum_{T:\text{ }B\subseteq T}\left(  \frac{m(T)}%
{mB)}\right)  ^{\alpha-1}\left(  1_{T}-\frac{1}{p}1_{T^{\prime}}\right)
\text{. }%
\end{align*}
In particular, $W:=(\mathfrak{D}^{\alpha}f)/f$ is given by%
\[
W=\frac{1_{B}}{\mathfrak{D}^{-\alpha}1_{B}}=\frac{p-p^{\alpha}}{p-1}%
\frac{1_{B}}{m(B)^{\alpha}}=\frac{p-p^{\alpha}}{p-1}\frac{1_{B}}%
{\mathrm{diam}(B)^{\alpha}}.
\]
If $\lambda>0$ and $0<\beta<\alpha$, there exists a ball $B$ such that
$\mathrm{diam}(B)$ is large enough so that
\[
W(x)<\frac{\lambda}{(\left\Vert x\right\Vert _{p}+1)^{\beta}}=\lambda V(x)
\]
for all $x\in\mathbb{Q}_{p}$. Hence, as $f$ \ belongs to $\mathrm{dom}%
(\mathfrak{D}^{\alpha})$, we obtain%
\begin{align*}
Q_{\lambda}(f,f)  &  =Q_{\mathfrak{D}^{\alpha}}(f,f)-Q_{\lambda V}%
(f,f)<Q_{\mathfrak{D}^{\alpha}}(f,f)-Q_{W}(f,f)\\
&  =(\mathfrak{D}^{\alpha}f,f)-(W\cdot f,f)=0
\end{align*}
and an application of the Rayleigh-Ritz formulae yields the desired result.
\end{proof}

\section{An example}

In this section we study the quadratic form $Q_{H}(u,u)$ defined by the
Hamiltonian $H=L+V$. We provide our calculations assuming that $L=\mathfrak{D}%
^{\alpha}$ and $V(x)=b\left\Vert x\right\Vert _{p}^{-\alpha}$ for $0<\alpha<1$
and $b\geq b_{\ast}$, a critical value which will be specified later.

\subsection{The Dirichlet form}

We regard the function $h(x)=\left\Vert x\right\Vert _{p}^{\beta}$ as a
distribution, see \cite{Vladimirov94}. For $\beta\neq\alpha$ equation
(\ref{D^alpha/betta identity}) shows that (in the sence of distributions)%
\[
Lh(x)=\frac{\Gamma_{p}(\beta+1)}{\Gamma_{p}(\beta+1-\alpha)}\left\Vert
x\right\Vert _{p}^{\beta-\alpha}.
\]
In particular, for $\beta>\alpha-1$ the distributions $h(x)$ and $Lh(x)$ are
regular (generated by locally integrable functions) and the function%
\begin{equation}
V(x):=-\frac{Lh(x)}{h(x)}=-\frac{\Gamma_{p}(\beta+1)}{\Gamma_{p}%
(\beta+1-\alpha)}\left\Vert x\right\Vert _{p}^{-\alpha} \label{V_L_h eq}%
\end{equation}
belongs to $L_{loc}^{1}(X,m)$, so it defines a regular distribution as well.

\begin{theorem}
\label{Claim1-2}For $\alpha-1<\beta<\alpha$ the following statements hold true:

1. For $0<\beta<\alpha$ the function $V(x)$ is strictly positive and belongs
to $L_{loc}^{1}(X,m)$, so $H$ is a minus Markovian generator by Theorem
\ref{L^1_loc theorem}. Moreover, for any $b>0$ there exists $0<\beta<\alpha,$
a solution of the equation
\begin{equation}
-\frac{\Gamma_{p}(\beta+1)}{\Gamma_{p}(\beta+1-\alpha)}=b,
\label{Gamma_alpha_betta}%
\end{equation}
such that $V(x)=b\left\Vert x\right\Vert _{p}^{-\alpha}$ for this value of
$\beta$.

2. For $\alpha-1<\beta<0$ the function $V(x)$ is strictly negative, so $H$ is
not a minus Markovian generator. However, for these values of $\beta$%
\[
V_{-}(x)=-V(x)\leq\left(  \Gamma_{p}\left(  \frac{1+\alpha}{2}\right)
\right)  ^{2}\left\Vert x\right\Vert _{p}^{-\alpha},
\]
so $H$ is a non-negative definite operator by Corollary
\ref{positive spectrum}. Moreover, for any $0>b\geq b_{\ast}:=-\{\Gamma
_{p}\left(  (1+\alpha)/2\right)  \}^{2}$ there exist $\alpha-1<\beta_{1}%
\leq(\alpha-1)/2$ and $(\alpha-1)/2\leq\beta_{2}<0$, solutions of equation
(\ref{Gamma_alpha_betta}), such that $V(x)=b\left\Vert x\right\Vert
_{p}^{-\alpha}$ for these two values of $\beta$.
\end{theorem}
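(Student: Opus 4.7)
The overall strategy is to reduce everything to the analysis of the single scalar function
\[
f(\beta) := \frac{\Gamma_p(\beta+1)}{\Gamma_p(\beta+1-\alpha)} = \Gamma_p(\beta+1)\,\Gamma_p(\alpha-\beta)
\]
on $(\alpha-1,\alpha)$, where the second equality uses the functional equation $\Gamma_p(z)\Gamma_p(1-z)=1$ and exhibits the key symmetry $f(\beta)=f(\alpha-1-\beta)$ about the midpoint $\beta_0:=(\alpha-1)/2$. Every assertion of the theorem translates into a statement about the sign, the extrema, or the range of $f$.

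The routine items I would dispose of at once. Plugging the explicit formula $\Gamma_p(z) = (1-p^{z-1})/(1-p^{-z})$ into $f$ and inspecting the signs of the four factors shows that $\Gamma_p(\beta+1)<0$ and $\Gamma_p(\beta+1-\alpha)>0$ for $0<\beta<\alpha$, whence $V = -f(\beta)\|x\|_p^{-\alpha}>0$; while for $\alpha-1<\beta<0$ both factors are positive, so $V<0$. Local integrability of $\|x\|_p^{-\alpha}$ on $\mathbb{Q}_p$ is the standard geometric-series computation over $p$-adic spheres, convergent because $\alpha<1$; combined with the sign analysis above, Theorem \ref{L^1_loc theorem} yields assertion~1 on $H$ being a minus Markov generator.

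The technical core is the sharp bound $f(\beta)\leq\Gamma_p((1+\alpha)/2)^2$ for $\beta\in(\alpha-1,0)$, which, via Corollary \ref{positive spectrum}, gives the non-negativity of $H$. Here I would change variables to $\tau = \beta-\beta_0$ and $x := p^{\tau}+p^{-\tau}\geq 2$; setting $\mu := p^{(\alpha-1)/2}$ and $\nu := p^{-(\alpha+1)/2}$ (so $0<\nu<\mu<1$), a direct expansion turns $f$ into the M\"obius-type expression
\[
f = \frac{1+\mu^{2}-\mu x}{1+\nu^{2}-\nu x}, \qquad \frac{df}{dx}=\frac{(\nu-\mu)(1-\mu\nu)}{(1+\nu^{2}-\nu x)^{2}}<0.
\]
Hence $f$ is strictly decreasing in $x$, and since $x\geq 2$ with equality exactly at $\tau=0$, its maximum on $(\alpha-1,0)$ is $f(\beta_0)=(1-\mu)^{2}/(1-\nu)^{2}=\Gamma_p((1+\alpha)/2)^{2}$, as required. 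The same monotonicity shows that $f$ is strictly increasing on $(\alpha-1,\beta_0]$ and strictly decreasing on $[\beta_0,0)$.

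The existence of solutions of $-f(\beta)=b$ is then the intermediate value theorem applied to these monotone pieces. On $(\alpha-1,\beta_0]$, $f$ rises continuously from $0$ (since $\Gamma_p(\alpha-\beta)\to\Gamma_p(1)=0$ as $\beta\to\alpha-1$) up to $\Gamma_p((1+\alpha)/2)^{2}$, producing $\beta_1$ for any $-b\in(0,\Gamma_p((1+\alpha)/2)^{2}]$; the symmetry $\beta\mapsto\alpha-1-\beta$ then produces the matching $\beta_2\in[\beta_0,0)$. On $(0,\alpha)$, $f$ runs continuously from $0$ at $\beta=0$ to $-\infty$ as $\beta\to\alpha^{-}$ (a simple pole of $\Gamma_p(\alpha-\beta)$ meeting the negative value $\Gamma_p(\alpha+1)$), covering every $b>0$. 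The principal obstacle is the monotonicity-in-$x$ step of the previous paragraph: only after the explicit M\"obius form of $f$ is written out does the otherwise mysterious constant $\{\Gamma_p((1+\alpha)/2)\}^{2}$ emerge as the precise critical value $-b_{\ast}$; everything else is bookkeeping with signs and asymptotics.
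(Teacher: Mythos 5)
Your proof is correct and follows essentially the same route as the paper: symmetrize around the midpoint $\beta_0=(\alpha-1)/2$ by substituting $\tau=\beta-\beta_0$, rewrite $f$ via the reflection formula $\Gamma_p(z)\Gamma_p(1-z)=1$, and then read off the range of $-f$ from its monotonicity on each side of $\tau=0$. The one place where you go beyond what the paper writes is the monotonicity step: the paper merely asserts that $C_\alpha(\vartheta)=-f$ is even, continuous and increasing on $[0,(1+\alpha)/2)$, calling this ``straightforward to show from the very definition of $\Gamma_p$,'' whereas you substitute $x=p^\tau+p^{-\tau}$ and produce the explicit fractional-linear form $f=(1+\mu^2-\mu x)/(1+\nu^2-\nu x)$ with $\mu=p^{(\alpha-1)/2}$, $\nu=p^{-(\alpha+1)/2}$, whose derivative $(\nu-\mu)(1-\mu\nu)/(1+\nu^2-\nu x)^2<0$ settles monotonicity in one line and makes the critical constant $\{\Gamma_p((1+\alpha)/2)\}^2=f(2)=(1-\mu)^2/(1-\nu)^2$ fall out automatically. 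This is a genuine improvement in transparency over the paper's ``straightforward'' gloss, but the underlying architecture --- symmetry, sign analysis, IVT on the two monotone branches --- is the same.
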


\begin{proof}
To prove the theorem we set $\vartheta=\beta+(1-\alpha)/2$ and write%
\[
-\frac{\Gamma_{p}(\beta+1)}{\Gamma_{p}(\beta+1-\alpha)}=-\Gamma_{p}\left(
\frac{1+\alpha}{2}+\vartheta\right)  \Gamma_{p}\left(  \frac{1+\alpha}%
{2}-\vartheta\right)  :=C_{\alpha}(\vartheta).
\]
The function $C_{\alpha}(\vartheta)$ is even, continuous and increasing on
each interval $[0,(1+\alpha)/2[$ and $](1+\alpha)/2,+\infty\lbrack$. Using the
very definition of the function $\Gamma_{p}(\xi)$ it is straightforward to
show that the following properties hold true:

\begin{enumerate}
\item $C_{\alpha}(0)=-\{\Gamma_{p}\left(  (1+\alpha)/2\right)  \}^{2}%
,C_{\alpha}((1-\alpha)/2)=0,$

\item $C_{\alpha}((1+\alpha)/2-0)=+\infty,C_{\alpha}((1+\alpha)/2+0)=-\infty,$

\item $C_{\alpha}(+\infty)=-p^{\alpha}<C_{\alpha}(0).$
\end{enumerate}

Clearly (1)-(3) imply the result. The proof of the theorem is finished.
\end{proof}

\bigskip

Let us choose $h(x)=\left\Vert x\right\Vert _{p}^{\beta}$ with $(\alpha
-1)/2<\beta<\alpha$. Then Theorem \ref{Claim1-2} applies, so $H=L+V$ is a
non-negative definite self-adjoint operator acting in $L^{2}(X,m)$.

According to our choice $h^{2}\in L_{loc}^{1}(X,m)$, so $h^{2}m$ is a Radon
measure. In particular, this allows us to define an isometry $U:L^{2}%
(X,h^{2}m)\rightarrow$ $L^{2}(X,m)$ by setting $U:g\rightarrow hg$. Consider a
non-negative self-adjoint operator%
\[
\mathcal{H}:=U^{-1}\circ H\circ U
\]
and let $Q_{\mathcal{H}}(u,u)=(\mathcal{H}^{1/2}u,\mathcal{H}^{1/2}u)$ be the
associated quadratic form. We have $Q_{H}=Q_{L}+Q_{V}$ whence
\begin{align*}
Q_{\mathcal{H}}(u,u)  &  =Q_{H}(hu,hu)=Q_{L}(hu,hu)+Q_{V}(hu,hu)\\
&  =\frac{1}{2}\int_{X}\int_{X}\left(  h(x)u(x)-h(y)u(y)\right)
^{2}J(x,y)dm(y)dm(x)\\
&  +\int_{X}V(x)u^{2}(x)h^{2}(x)dm(x)
\end{align*}
where
\begin{equation}
J(x,y)=-\frac{1}{\Gamma_{p}(-\alpha)}\frac{1}{\left\Vert x-y\right\Vert
_{p}^{1+\alpha}}\text{ }. \label{Jump-function}%
\end{equation}

\begin{theorem}
\label{H-DF}Assume that $(\alpha-1)/2<\beta<\alpha$. Then $Q_{\mathcal{H}%
}(u,u)$ is a Dirichlet form in $L^{2}(X,h^{2}m)$. Moreover, $\mathcal{D}%
\subset dom(Q_{\mathcal{H}})$ and for $u\in\mathcal{D}$,%
\begin{equation}
Q_{\mathcal{H}}(u,u)=\frac{1}{2}\int_{X}\int_{X}\left(  u(x)-u(y)\right)
^{2}J(x,y)h(y)dm(y)h(x)dm(x). \label{Q_H_identity}%
\end{equation}

\end{theorem}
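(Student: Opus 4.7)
The plan is to carry out a \emph{ground state transformation}. The choice $h(x) = \|x\|_p^{\beta}$ with $(\alpha-1)/2 < \beta < \alpha$ was made precisely so that, by equation \eqref{V_L_h eq} of Theorem \ref{Claim1-2}, the relation $Lh + Vh = 0$ holds in the distributional and pointwise a.e.\ sense. The identity \eqref{Q_H_identity} should then follow from an algebraic decomposition of the kinetic jump integral $Q_L(hu,hu)$ that produces the symmetric ``diagonal'' piece on the right-hand side of \eqref{Q_H_identity} plus a cross term which cancels against $Q_V(hu,hu)$.

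Concretely, I would start from the pointwise identity
\[
(h(x)u(x) - h(y)u(y))^2 = h(x)h(y)\,(u(x)-u(y))^2 + (h(x)-h(y))\bigl(h(x)u(x)^2 - h(y)u(y)^2\bigr),
\]
verified by direct expansion. Substituting this into the jump representation $Q_L(hu,hu) = \tfrac{1}{2}\iint (h(x)u(x)-h(y)u(y))^2 J(x,y)\,dm(x)dm(y)$ splits $Q_L(hu,hu)$ into exactly the right-hand side of \eqref{Q_H_identity} plus a cross term. Using symmetry of $J$ and Fubini, the cross term reduces to $\int h u^2 \cdot Lh\, dm$, which by $Lh = -Vh$ equals $-Q_V(hu,hu)$. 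Adding $Q_V(hu,hu)$ to $Q_L(hu,hu)$ then recovers \eqref{Q_H_identity}. To check $\mathcal{D}\subset \mathrm{dom}(Q_\mathcal{H})$, i.e. $hu\in\mathrm{dom}(Q_H)=\mathrm{dom}(Q_L)\cap\mathrm{dom}(Q_V)$ for $u\in\mathcal{D}$: the compact support of $u$ and $h^2\in L^1_{loc}$ (since $2\beta+1>0$) give $hu\in L^2(X,m)$; the lower bound $\beta>(\alpha-1)/2$ makes $Q_V(hu,hu)=b\int \|x\|_p^{2\beta-\alpha}u^2\,dm$ finite; and the upper bound $\beta<\alpha$ guarantees that the right-hand side of \eqref{Q_H_identity} is integrable at infinity, where its integrand decays like $\|y\|_p^{\beta-1-\alpha}$. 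Closedness of $Q_\mathcal{H}$ on $L^2(X,h^2 m)$ is inherited from closedness of $Q_H$ on $L^2(X,m)$ via the unitary $U\colon g\mapsto hg$; Markovianity is immediate from \eqref{Q_H_identity} since the kernel $J(x,y)h(x)h(y)$ is symmetric and non-negative and normal contractions reduce $(u(x)-u(y))^2$ pointwise; density of $\mathcal{D}$ in $L^2(X,h^2m)$ is standard because $h^2 m$ is Radon.

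The main obstacle I expect is justifying the Fubini exchange and the pointwise use of $Lh=-Vh$ inside the integral when $u(0)\neq 0$, since $h$ has a local singularity at the origin for $\beta<0$. I would handle this by first proving \eqref{Q_H_identity} for $u\in\mathcal{D}$ whose support is bounded away from $0$ — in which case $hu$ is itself locally constant with compact support and every integral in the derivation is manifestly absolutely convergent — and then extending to an arbitrary $u\in\mathcal{D}$ by writing $u=u(0)\mathbf{1}_{B_r}+u_0$ with $u_0$ vanishing near $0$, approximating $\mathbf{1}_{B_r}$ by $\mathbf{1}_{B_r\setminus B_{r'}}$ as $r'\to-\infty$, and passing to the limit using local integrability of $Vh^2$ and of the jump integrand near $0$ (both ensured by $\beta>(\alpha-1)/2$) together with dominated convergence.
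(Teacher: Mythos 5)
Your argument is correct, and the skeleton is the same as the paper's (a ground state transformation via $Lh=-Vh$, $\mathcal{D}\subset\mathrm{dom}(Q_{\mathcal{H}})$ checked first for test functions supported off the origin, then extended), but the way you produce the identity \eqref{Q_H_identity} is genuinely different from, and cleaner than, the paper's.

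You start from the exact pointwise identity
\[
\bigl(h(x)u(x)-h(y)u(y)\bigr)^{2}=h(x)h(y)\bigl(u(x)-u(y)\bigr)^{2}+\bigl(h(x)-h(y)\bigr)\bigl(h(x)u(x)^{2}-h(y)u(y)^{2}\bigr),
\]
which holds for arbitrary real $h(x),h(y),u(x),u(y)$, and then feed it into the jump representation of $Q_{L}(hu,hu)$. Symmetrizing the cross term and computing $\int(h(x)-h(y))J(x,y)\,dm(y)=Lh(x)=-V(x)h(x)$ immediately kills $Q_{V}(hu,hu)$. The paper instead restricts to $u=\mathbf{1}_{B}$ with $0\notin B$, where $h_{B}=h\mathbf{1}_{B}\in\mathcal{D}$, and proves the key identity
\[
Q_{V}(hu,hu)=-\tfrac{1}{2}\iint\bigl(h_{B}(x)-h_{B}(y)\bigr)\bigl(h(x)-h(y)\bigr)J(x,y)\,dm(x)dm(y)
\]
by a distribution-theoretic computation: writing $h$ and $-Lh$ as $p$-adic Riesz distributions $f_{\gamma}$ and using associativity of convolution, then symmetrizing. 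On indicators your identity reduces to theirs (since $u^{2}=u$), so the two derivations agree there; but your version works for arbitrary $u\in\mathcal{D}$ at one stroke and avoids the $\mathcal{D}^{\prime}$-machinery and the subsequent passage to general $u$ via lower semicontinuity of quadratic forms. The price you pay is that you must justify Fubini and the pointwise use of $Lh=-Vh$ directly in the double integral when $u(0)\neq0$ (the singularity of $h$ and $V$ at $0$); you correctly flag this, and the approximation you sketch — first treating $u$ supported off $0$, then passing to the limit using $2\beta-\alpha>-1$ for local integrability of $Vh^{2}$ and the jump integrand, and $\beta<\alpha$ at infinity — is sound and mirrors the estimates the paper does for $u=\mathbf{1}_{B}$ with $0\in B$. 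The remaining points (closedness transported by the unitary $U$, Markovianity from the manifestly non-negative kernel $J(x,y)h(x)h(y)$, density of $\mathcal{D}$) are straightforward and match the paper. One cosmetic remark: the condition $h^{2}\in L^{1}_{\mathrm{loc}}$ only needs $2\beta>-1$, which is weaker than $\beta>(\alpha-1)/2$; what actually forces the lower bound $\beta>(\alpha-1)/2$ is the finiteness of $Q_{V}(hu,hu)$ and of the near-diagonal part of the jump integral, exactly as you say.
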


\begin{proof}
Let us prove that $\mathcal{D}\subset dom(Q_{\mathcal{H}})$. It is enough to
show that $Q_{\mathcal{H}}(u,u)$ is finite for $u=\mathbf{1}_{B}$,
$B\in\mathcal{B}$. We have $Q_{\mathcal{H}}(u,u)=Q_{L}(hu,hu)+Q_{V}(hu,hu)$.
Since $V(x)=b\left\Vert x\right\Vert _{p}^{-\alpha}$ we get for $\beta
>(\alpha-1)/2$:
\[
|Q_{V}(hu,hu)|=|b|\int_{B}\left\Vert x\right\Vert _{p}^{-\alpha+2\beta
}dm(x)<\infty.
\]
Let us assume first that $0\notin B$, then clearly $hu\in\mathcal{D}.$ Since
$\mathcal{D}\subset dom(L)$,%
\[
Q_{L}(hu,hu)=(Lhu,hu)<\infty.
\]
Assume now that $0\in B$ and set $h_{B}:=h\mathbf{1}_{B}$, then%
\begin{align*}
Q_{L}(hu,hu)  &  =\frac{1}{2}%
{\displaystyle\iint}
\left(  h_{B}(x)-h_{B}(y)\right)  ^{2}J(x,y)dm(x)dm(y)\\
&  =%
{\displaystyle\iint\limits_{(x,y)\in B\times B:\text{ }\left\Vert x\right\Vert
_{p}<\left\Vert y\right\Vert _{p}}}
\left(  h(x)-h(y)\right)  ^{2}J(x,y)dm(x)dm(y)\\
&  +\int_{B}h^{2}(x)dm(x)\int_{B^{c}}J(x,y)dm(y).
\end{align*}
The second term, call it $II$, is finite. Indeed, we have
\[
II=\int_{B}h^{2}(x)dm(x)\int_{B^{c}}J(0,z)dm(z)<\infty.
\]
Without loss of generality we may assume that $diam(B)=1$. By the ultrametric
inequality, $\left\Vert x\right\Vert _{p}<\left\Vert y\right\Vert _{p}$
implies that $\left\Vert x-y\right\Vert _{p}=\left\Vert y\right\Vert _{p}$, so
the first term, call it $I$, can be estimated as follows:%
\begin{align*}
I  &  =-\frac{1}{\Gamma_{p}(-\alpha)}%
{\displaystyle\sum\limits_{k=1}^{\infty}}
{\displaystyle\sum\limits_{l=1}^{k}}
{\displaystyle\int\limits_{\left\Vert x\right\Vert _{p}=p^{-k}}}
dm(x)%
{\displaystyle\int\limits_{\left\Vert y\right\Vert _{p}=p^{-k+l}}}
dm(y)\left(  \left\Vert x\right\Vert _{p}^{\beta}-\left\Vert y\right\Vert
_{p}^{\beta}\right)  ^{2}\left\Vert y\right\Vert _{p}^{-(1+\alpha)}\\
&  =-\frac{1}{\Gamma_{p}(-\alpha)}\left(  1-\frac{1}{p}\right)  ^{2}%
{\displaystyle\sum\limits_{k=1}^{\infty}}
{\displaystyle\sum\limits_{l=1}^{k}}
p^{-k}p^{-k+l}p^{-(1+\alpha)(-k+l)}\left(  p^{-k\beta}-p^{(-k+l)\beta}\right)
^{2}\\
&  =-\frac{1}{\Gamma_{p}(-\alpha)}\left(  1-\frac{1}{p}\right)  ^{2}%
{\displaystyle\sum\limits_{k=1}^{\infty}}
p^{-k(1-\alpha+2\beta)}%
{\displaystyle\sum\limits_{l=1}^{k}}
p^{-l\alpha}\left(  1-p^{l\beta}\right)  ^{2}.
\end{align*}
That $I$ is finite for $(\alpha-1)/2<\beta<\alpha$ follows by inspection.

Since the function $u\rightarrow Q_{\mathcal{H}}(u,u)$ is lower
semi-continuous, equation (\ref{Q_H_identity}) is enough to prove for
$u=\mathbf{1}_{B}$ where $B$ is a ball such that $0\notin B$. In this case the
function $h_{B}=h\mathbf{1}_{B}$ belongs to $\mathcal{D}$. Let us consider the
distribution $f_{\gamma}(x)=\left\Vert x\right\Vert _{p}^{\gamma-1}/\Gamma
_{p}(\gamma)$. According to \cite[Section IX]{Vladimirov94}, $h(x)=\Gamma
_{p}(\beta+1)f_{\beta+1}$ and $-Lh=f_{-\alpha}\ast\Gamma_{p}(\beta
+1)f_{\beta+1}$ whence, setting $C:=\Gamma_{p}(\beta+1)$, we get
\begin{align*}
Q_{V}(hu,hu)  &  =\int(-Lh)h_{B}dm=\left(  (-Lh)\ast h_{B}\right)  (0)\\
&  =C(\left(  f_{-\alpha}\ast f_{\beta+1}\right)  \ast h_{B})(0)=C((f_{\beta
+1}\ast(f_{-\alpha}\ast h_{B}))(0)\\
&  =\int h(-Lh_{B})dm=-%
{\displaystyle\iint}
(h_{B}(x)-h_{B}(y))h(x)J(x,y)dm(x)dm(y)
\end{align*}
and by symmetry%
\[
Q_{V}(hu,hu)=-%
{\displaystyle\iint}
(h_{B}(y)-h_{B}(x))h(y)J(x,y)dm(x)dm(y).
\]
Thus finally we get%
\begin{equation}
Q_{V}(hu,hu)=-\frac{1}{2}%
{\displaystyle\iint}
(h_{B}(x)-h_{B}(y))(h(x)-h(y))J(x,y)dm(x)dm(y). \label{Q_V}%
\end{equation}
On the other hand, for $u$ as above,%
\begin{equation}
Q_{L}(hu,hu)=\frac{1}{2}%
{\displaystyle\iint}
(h_{B}(x)-h_{B}(y))^{2}J(x,y)dm(x)dm(y). \label{Q_L}%
\end{equation}
Clearly equations (\ref{Q_V}) and (\ref{Q_L}) yield equation
(\ref{Q_H_identity}).

Thus, the quadratic form $Q_{\mathcal{H}}(u,u)$ is densly defined, closed,
non-negative definite, and Markovian. That means that $Q_{\mathcal{H}}(u,u)$
is a Dirichlet form in $L^{2}(X,h^{2}m)$ as claimed.
\end{proof}

\begin{definition}
A Dirichlet form $Q(u,u)$ relative to $L^{2}(X,\mu)$ (respectively, a
symmetric Markovian semigroup $(P_{t})_{t>0}$ in $L^{2}(X,\mu)$) is called
transient if the associated resolvent $(G_{\lambda})_{\lambda>0}$ can be
extended for the value $\lambda=0$ as a self-adjoint (possibly unbounded)
operator $G_{0}=\int_{0}^{\infty}P_{t}dt$ such that $\mathbf{1}_{K}\in
dom(G_{0})$ for every compact set $K\subset X$.
\end{definition}

One can show that the form $Q(u,u)$ is transient if and only if the following
condition holds: for every compact set $K\subset X$ there exists a constant
$C_{K}>0$ such that%
\[
\int_{X}\left\vert u\right\vert d\mu\leq C_{K}\sqrt{Q(u,u)},\text{ }\forall
u\in dom(Q)\text{ }\footnote{This condition of transience was first introduced
by A. Beurling and J. Deny in the unreplacable paper \cite{BeurlingDeny}. It
is slightly more restrictive than the definition of transience given in
\cite[Section 1.5]{Fukushima}.}\text{.}%
\]

\begin{theorem}
\label{non-hom.DF}In the setting of Theorem \ref{H-DF}:

\begin{enumerate}
\item There exists a hierarhical Laplacian $\mathcal{L}$, related to the
(non-homogeneous) ultrametric measure space $(X,hm)$, such that
\[
Q_{\mathcal{H}}(u,u)=Q_{\mathcal{L}}(u,u),\text{ }\forall u\in L^{2}(X,hm)\cap
L^{2}(X,h^{2}m).
\]

\item $\mathcal{D}\subset dom(Q_{\mathcal{L}})$ is a core of $Q_{\mathcal{L}}$
(i.e. $Q_{\mathcal{L}}(u,u)$ is a regular Dirichlet form in $L^{2}(X,hm)$).

\item The Dirichlet form $Q_{\mathcal{L}}$ relative to $L^{2}(X,hm)$ is
transient. In particular, the Dirichlet form $Q_{\mathcal{H}}$ relative to
$L^{2}(X,h^{2}m)$ is transient as well.
\end{enumerate}
\end{theorem}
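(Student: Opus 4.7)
The proof splits naturally into the three parts of the statement.

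\textbf{Part 1 (existence of $\mathcal{L}$).} Set $\tilde{m} := h\, m$. The goal is to realize the Dirichlet form of Theorem~\ref{H-DF} as arising from a hierarchical Laplacian on $(X, d, \tilde{m})$. Since the jump kernel in (\ref{Q_H_identity}) is the function $J(x, y)$ of (\ref{Jump-function})---depending only on $m(x \curlywedge y)$---it is enough to find a choice function $C : \mathcal{B} \to (0, \infty)$ whose induced kernel $\sum_{T \supseteq x \curlywedge y} C(T)/\tilde{m}(T)$ equals $J(x, y)$. Writing $J_B := -1/(\Gamma_p(-\alpha)\, m(B)^{1+\alpha})$, the telescoping identity $J_B = \sum_{T \supseteq B}(J_T - J_{T'})$ dictates
\[
C(B) := \tilde{m}(B)\bigl(J_B - J_{B'}\bigr) > 0,
\]
where $B'$ is the parent of $B$. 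The summability conditions of Section~2.2---finiteness of $\lambda(B) := \sum_{T \supseteq B} C(T)$ on non-singleton balls and divergence as $B$ contracts to a point---reduce to the scalings $\tilde{m}(B) \asymp m(B)^{1+\beta}$ for balls containing $0$ combined with $J_B - J_{B'} \asymp m(B)^{-1-\alpha}$, giving $C(B) \asymp m(B)^{\beta-\alpha}$; the tail sum converges because $\beta < \alpha$, while $C(B) \to \infty$ as $m(B) \to 0$. The identity $Q_{\mathcal{L}}(u, u) = Q_{\mathcal{H}}(u, u)$ on $\mathcal{D}$ then follows from the telescoping, and extends to $L^2(X, hm) \cap L^2(X, h^2 m)$ by a density/closure argument.

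\textbf{Part 2 (regularity).} Once $\mathcal{L}$ is a hierarchical Laplacian on $(X, d, \tilde{m})$, the construction (\ref{eigenfunction})--(\ref{eigenvalue}) produces a complete orthogonal system $\{f_B\}$ of eigenfunctions in $L^2(X, \tilde{m})$, each $f_B = \mathbf{1}_B/\tilde{m}(B) - \mathbf{1}_{B'}/\tilde{m}(B')$ lying in $\mathcal{D}$. Hence $\mathcal{L}$ is essentially self-adjoint on $\mathcal{D}$, so $\mathcal{D}$ is a core of $Q_{\mathcal{L}}$.

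\textbf{Part 3 (transience).} I would invoke the non-homogeneous analogue of the Green-function formula (\ref{Green function}): for a fixed base point $x$,
\[
g_{\mathcal{L}}(x, y) \asymp \int_{d_*(x, y)}^\infty \frac{d\tau}{V_{\mathcal{L}}(\tau)}.
\]
The intrinsic ultrametric satisfies $d_*(x, y) = 1/\lambda(x \curlywedge y) \asymp m(x \curlywedge y)^{\alpha - \beta}$ (once the enclosing ball contains $0$, the eventual regime); inverting, $m(B) \asymp \tau^{1/(\alpha-\beta)}$, hence $V_{\mathcal{L}}(\tau) = \tilde{m}(B) \asymp m(B)^{1+\beta} \asymp \tau^{(1+\beta)/(\alpha-\beta)}$. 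The integral converges at infinity iff $(1+\beta)/(\alpha-\beta) > 1$, i.e.\ iff $\beta > (\alpha-1)/2$---our standing hypothesis---so $Q_{\mathcal{L}}$ is transient. Finally, comparing $(\mathcal{L} u, v)_{hm} = (\mathcal{H} u, v)_{h^2 m}$ yields $\mathcal{H} = \mathcal{L}/h$, whence $\mathcal{H}^{-1} f = \mathcal{L}^{-1}(hf)$; the Green kernel of $\mathcal{H}$ against $h^2 m$ equals $g_{\mathcal{L}}$, so transience of $Q_{\mathcal{H}}$ is immediate.

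The main obstacle is Part 3: the Green-function formula (\ref{Green function}) is stated in the homogeneous setting, and adapting it to $(X, d, \tilde{m})$ requires controlling the base-point dependence of $V_{\mathcal{L}}$---which the ultrametric absorbs, since any ball containing two distant points must eventually contain any fixed base point. Once this is handled, the exponent arithmetic $\beta > (\alpha-1)/2$ closes the argument.
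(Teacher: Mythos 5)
Your proposal follows essentially the same route as the paper's: Part~1 constructs $\mathcal{L}$ by setting $C(B)=\widetilde{m}(B)\bigl(J(B)-J(B')\bigr)$ and verifying the conditions (i)--(iii) that characterize a hierarchical Laplacian on $(X,d,\widetilde{m})$ (the paper invokes \cite{Ben} for this step, after computing $\widetilde{m}(S)=\tfrac{p-1}{p-p^{-\beta}}\,m(S)^{1+\beta}$ for balls containing $0$), and Part~2 likewise rests on essential self-adjointness of $\mathcal{L}$ via its complete eigensystem. For Part~3 the paper defers the actual transience proof to the Green-function estimate of Theorem~\ref{green_function} and then derives the Beurling--Deny condition by Cauchy--Schwarz against $v=G_0\mathbf{1}_K$, which is precisely the volume-growth argument $\widetilde{V}(x,\widetilde{R})\asymp\widetilde{R}^{(1+\beta)/(\alpha-\beta)}$ with $(1+\beta)/(\alpha-\beta)>1$ that you use.
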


\begin{proof}
Consider the function%
\[
J(B):=-\frac{1}{\Gamma_{p}(-\alpha)}\frac{1}{m(B)^{1+\alpha}},\text{ }%
B\in\mathcal{B}\text{,}%
\]
defined on the set $\mathcal{B}$ of all open balls. Since in the $p$-adic
metric $m(B)=diam(B)$ for any ball $B$, we get
\[
J(x,y)=J(x\curlywedge y)
\]
where $x\curlywedge y$ is the minimal ball which contains $x$ and $y$.
Consider also the Radon measure $\widetilde{m}=hm$. We claim that the
following properties hold true:

\begin{description}
\item[(i)] $S\subset T\Longrightarrow J(S)>J(T)$ and $J(T)\rightarrow0$ as
$T\rightarrow X$.

\item[(ii)] $\widetilde{\lambda}(B):=\sum_{S:\text{ }B\subseteq S}%
\widetilde{m}(S)\left(  J(S)-J(S^{\prime})\right)  <\infty$ for any
$B\in\mathcal{B}.$

\item[(iii)] $\widetilde{\lambda}(B)\rightarrow+\infty$ as $B\rightarrow\{x\}$
for any $x\in X.$
\end{description}

The property $(i)$ is evident. To prove $(ii)$ we write%
\begin{align*}
\widetilde{\lambda}(B)  &  =-\frac{1}{\Gamma_{p}(-\alpha)}\left(  1-\frac
{1}{p^{1+\alpha}}\right)  \sum_{S:\text{ }B\subseteq S}\frac{\widetilde{m}%
(S)}{m(S)^{1+\alpha}}\\
&  =(p^{\alpha}-1)\sum_{S:\text{ }B\subseteq S}\frac{\widetilde{m}%
(S)}{m(S)^{1+\alpha}}.
\end{align*}
Next, using the identity%
\[
\int f(\left\Vert x\right\Vert _{p})dm(x)=\left(  1-\frac{1}{p}\right)
\sum_{\gamma=-\infty}^{\infty}f(p^{\gamma})p^{\gamma},
\]
we obtain that if $0\in$ $S$ then
\begin{equation}
\widetilde{m}(S)=\frac{p-1}{p-p^{-\beta}}m(S)^{1+\beta}\text{, }
\label{ass.eq}%
\end{equation}
so
\begin{equation}
\frac{\widetilde{m}(S)}{m(S)^{1+\alpha}}=\frac{p-1}{p-p^{-\beta}}\frac
{1}{m(S)^{\alpha-\beta}}.\text{ } \label{ass.eq'}%
\end{equation}
Clearly equality (\ref{ass.eq'}) implies $(ii)$. On the other hand, for
$B\in\mathcal{B}(x)$ small enough we have
\begin{equation}
\widetilde{\lambda}(B)\geq(p^{\alpha}-1)\frac{\widetilde{m}(B)}{m(B)^{1+\alpha
}}>(p^{\alpha}-1)m(B)^{-\alpha}\min_{y\in B}\left\Vert y\right\Vert
_{p}^{\beta} \label{ass.ineq.}%
\end{equation}
and%
\begin{equation}
\min_{y\in B}\left\Vert y\right\Vert _{p}^{\beta}=\left\{
\begin{array}
[c]{ccc}%
\left\Vert x\right\Vert _{p}^{\beta} & \text{if} & x\neq0\\
(m(B)^{\beta} & \text{if} & x=0
\end{array}
\right.  , \label{ass.B}%
\end{equation}
so (\ref{ass.ineq.}) and (\ref{ass.B}) imply $(iii)$.

According to \cite[Section 2]{Ben}, properties $(i)-(iii)$ imply that the
operator
\begin{equation}
\mathcal{L}u(x)=\int\left(  u(x)-u(y)\right)  J(x,y)d\widetilde{m}(y)
\label{L_cal}%
\end{equation}
is a hierarhical Laplacian in $L^{2}(X,\widetilde{m})$. In particular,
$\mathcal{D}\subset dom(\mathcal{L})$ and for $u\in\mathcal{D}$ we have
\[
Q_{\mathcal{L}}(u,u)=\frac{1}{2}\int\int\left(  u(x)-u(y)\right)
^{2}J(x,y)d\widetilde{m}(y)d\widetilde{m}(x)=Q_{\mathcal{H}}(u,u).
\]
That $\mathcal{D}$ is a core of $Q_{\mathcal{L}}$ follows from the fact that
$\mathcal{L}$, as a hierarchical Laplacian, is essentially self-adjoint.
Indeed, in this case $(Q_{\mathcal{L}},dom(Q_{\mathcal{L}}))$ coinsides with
the minimal extension of $(Q_{\mathcal{L}},\mathcal{D})$ which has
$\mathcal{D}$ as a core.

The proof of the fact that the Markovian semigroup $(e^{-t\mathcal{L}})_{t>0}$
is transient, i.e. that $\mathbf{1}_{K}$ belongs to $dom(G_{0})$ for any
compact set $K$, we postpone to the next section (Theorem \ref{green_function}%
). Let us show how to derive the Beurling-Deny condition of transience from
the transience of the semigroup $(e^{-t\mathcal{L}})_{t>0}$. For any $u\in
dom(Q_{\mathcal{L}})$ we have $|u|\in dom(Q_{\mathcal{L}})$ and
$Q_{\mathcal{L}}(|u|,|u|)\leq Q_{\mathcal{L}}(u,u).$ Also $v:=G_{0}%
\mathbf{1}_{K}$ is in $dom(\mathcal{L})$ and $\mathcal{L}v=\mathbf{1}_{K}$
whence
\begin{align*}
\int_{K}\left\vert u\right\vert d\widetilde{m}  &  =Q_{\mathcal{L}}(|u|,v)\\
&  \leq\sqrt{Q_{\mathcal{L}}(v,v)}\sqrt{Q_{\mathcal{L}}(u,u)}.
\end{align*}
Setting $C_{K}:=\sqrt{Q_{\mathcal{L}}(v,v)}$ we get the desired result. The
proof is finished.
\end{proof}

\subsection{The Green function $g_{\mathcal{L}}(x,y)$}

In what follows we assume that $(\alpha-1)/2<\beta<\alpha$. The Markovian
resolvent $G_{\lambda}=(\mathcal{L}+\lambda\mathrm{I})^{-1}$, $\lambda>0$,
acts in Banach spaces $C_{\infty}(X)$ and $L^{p}(X,\widetilde{m})$, where
$\widetilde{m}=hm$, as a bounded operator and admits the following
representation
\[
G_{\lambda}u(x)=\int g_{\mathcal{L}}(\lambda,x,y)u(y)d\widetilde{m}(y).
\]
Here $g_{\mathcal{L}}(\lambda,x,y)$, the so called $\lambda$-Green function,
is a continuous function taking finite values outside the diagonal set. As a
function of $\lambda$ it decreases, so the limit (finite or infinite)%
\[
g_{\mathcal{L}}(x,y):=\lim_{\lambda\rightarrow0}g_{\mathcal{L}}(\lambda,x,y)
\]
exists. The function $g_{\mathcal{L}}(x,y)$ is called the Green function of
the operator $\mathcal{L}$.

\begin{theorem}
\label{green_function}The Green function $g_{\mathcal{L}}(x,y)$ is a
continuous function taking finite values off the diagonal set. Moreover, the
following relationship holds:
\begin{equation}
g_{\mathcal{L}}(x,y)\asymp\frac{\left\Vert x-y\right\Vert _{p}^{\alpha-1}%
}{\left(  \left\Vert x\right\Vert _{p}\vee\left\Vert y\right\Vert _{p}\right)
^{2\beta}}, \label{g_cal_L_as}%
\end{equation}
or equivalently%
\begin{equation}
\frac{g_{\mathcal{L}}(x,y)}{g_{L}(x,y)}\asymp\left(  \frac{1}{\left\Vert
x\right\Vert _{p}}\wedge\frac{1}{\left\Vert y\right\Vert _{p}}\right)
^{2\beta}. \label{g_cal_L_g_L}%
\end{equation}

\end{theorem}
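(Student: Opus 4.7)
My approach is to combine the series form of the Green function of a hierarchical Laplacian with an explicit computation of the eigenvalue $\widetilde{\lambda}(B_0)$ and volume $\widetilde{m}(B_0)$ at the minimal ball $B_0 := x \curlywedge y$; note that $m(B_0) = \|x-y\|_p =: r$. Since $\mathcal{L}$ shares its tree of balls with $L = \mathfrak{D}^{\alpha}$, converting the integral in (\ref{Green function}) to a sum along the chain $B_0 \subset B_0' \subset B_0'' \subset \cdots$ extends to the non-homogeneous setting and yields
\[
g_{\mathcal{L}}(x,y) \;=\; \sum_{B \supseteq B_0} \frac{1}{\widetilde{m}(B)} \left( \frac{1}{\widetilde{\lambda}(B')} - \frac{1}{\widetilde{\lambda}(B)} \right).
\]
The entire task then reduces to estimating these terms along the tower above $B_0$, with $R := \|x\|_p \vee \|y\|_p$ as the key auxiliary parameter.

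I would split the analysis into two cases using the strong triangle inequality: Case A with $r < R$, which forces $\|x\|_p = \|y\|_p = R$ and $0 \notin B_0$; and Case B with $r = R$, which gives $0 \in B_0$. In Case A every $z \in B_0$ has $\|z\|_p = R$, so $\widetilde{m}(B_0) = R^\beta r$; in Case B identity (\ref{ass.eq}) gives $\widetilde{m}(B_0) \asymp r^{1+\beta}$. For $\widetilde{\lambda}(B_0)$ I use the defining sum from the proof of Theorem \ref{non-hom.DF} and apply (\ref{ass.eq'}) after splitting according to whether a ball $S \supseteq B_0$ contains $0$. The sub-sum over $S \not\ni 0$, present only in Case A on the levels $m(S) \in [r, R/p]$, is a geometric series with ratio $p^{-\alpha}$ contributing $\asymp R^\beta r^{-\alpha}$; the sub-sum over $S \ni 0$ is a geometric series with ratio $p^{\beta-\alpha} < 1$ (since $\beta < \alpha$), contributing $\asymp R^{\beta-\alpha}$. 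Since $r \le R$ the first dominates in Case A and coincides with the second in Case B, giving uniformly $\widetilde{\lambda}(B_0) \asymp R^\beta r^{-\alpha}$. Multiplying,
\[
\frac{1}{\widetilde{\lambda}(B_0)\,\widetilde{m}(B_0)} \;\asymp\; \frac{r^{\alpha-1}}{R^{2\beta}},
\]
which, once the full Green-function series is shown to be comparable to its first term, gives (\ref{g_cal_L_as}); the equivalent form (\ref{g_cal_L_g_L}) then follows from the closed expression $g_L(x,y) = \Gamma_p(\alpha)^{-1} \|x-y\|_p^{\alpha-1}$ together with the identity $1/R = 1/\|x\|_p \wedge 1/\|y\|_p$.

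The step I expect to require the most care is justifying the reduction of the displayed series to its first term. The key observation is that on the upper part of the tower---where $0 \in B$---the asymptotics $\widetilde{m}(B) \asymp m(B)^{1+\beta}$ and $\widetilde{\lambda}(B) \asymp m(B)^{\beta-\alpha}$ turn each term of the series into a quantity $\asymp m(B)^{\alpha - 1 - 2\beta}$, producing a geometric series in $m(B)$ with common ratio $p^{\alpha - 1 - 2\beta}$. This ratio is less than one precisely when $\beta > (\alpha-1)/2$---exactly the standing hypothesis---so the series converges and is dominated by its first term. The same convergence simultaneously supplies the transience of $(e^{-t\mathcal{L}})_{t>0}$ that was postponed from Theorem \ref{non-hom.DF} (since $\mathbf{1}_K \in \mathrm{dom}(G_0)$ for every compact $K$ follows from the series being uniformly bounded on compact sets off the diagonal), shows that $g_\mathcal{L}(x,y)$ is finite off the diagonal, and delivers continuity of $g_\mathcal{L}$ there via uniform convergence on compact subsets of $\{(x,y) : x \ne y\}$.
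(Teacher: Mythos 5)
Your proposal is correct, and it takes a genuinely different route from the paper's. The paper first builds the intrinsic ultrametric $\widetilde{d}(x,y)=F(x,d(x,y))$ of $\mathcal{L}$ (where $d$ is the intrinsic ultrametric of $\mathfrak{D}^{\alpha}$ and $F$ is given by an explicit integral), then establishes the two-regime asymptotics of the associated volume function $\widetilde{V}(x,\widetilde{R})$ and evaluates the continuous Green function formula $g_{\mathcal{L}}(x,y)=\int_{\widetilde{d}(x,y)}^{\infty}\widetilde{V}(x,\widetilde{R})^{-1}\,d\widetilde{R}$ by splitting into exactly the same two cases you use ($\left\Vert x-y\right\Vert _{p}=\left\Vert x\right\Vert _{p}\vee\left\Vert y\right\Vert _{p}$ versus $<$). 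Your approach discretizes that integral directly into the sum over the chain of balls above $B_{0}=x\curlywedge y$ and estimates $\widetilde{m}(B)$ and $\widetilde{\lambda}(B)$ ball by ball, never constructing $\widetilde{d}$ at all. What your version buys is a more elementary, self-contained computation: the critical threshold $\beta>(\alpha-1)/2$ appears transparently as the geometric-decay condition $p^{\alpha-1-2\beta}<1$ on the upper part of the tower, and transience, finiteness, and continuity of $g_{\mathcal{L}}$ all drop out of the convergence of a single series, as you note. What the paper's version buys is structural information --- the explicit intrinsic ultrametric of the non-homogeneous Laplacian $\mathcal{L}$ --- which it reuses elsewhere. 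Two small points worth firming up when you write the details: (a) the first term of your series is actually $\widetilde{m}(B_{0})^{-1}\bigl(\widetilde{\lambda}(B_{0}')^{-1}-\widetilde{\lambda}(B_{0})^{-1}\bigr)$, not $(\widetilde{\lambda}(B_{0})\widetilde{m}(B_{0}))^{-1}$, so you should note that $\widetilde{\lambda}(B_{0}')\asymp\widetilde{\lambda}(B_{0})$ and that the difference of reciprocals is comparable to $1/\widetilde{\lambda}(B_{0})$ (this follows from your own two-regime estimates applied to $B_{0}'$); and (b) in Case A the lower part of the tower ($0\notin B$, $m(B)\in[r,R/p]$) contributes a second geometric series with ratio $p^{\alpha-1}<1$, also dominated by its first term, and the head of the upper geometric series ($\asymp R^{\alpha-1-2\beta}$) is $\lesssim r^{\alpha-1}/R^{2\beta}$ because $r\leq R$ and $\alpha<1$ --- you gesture at this but should spell it out to conclude that the whole series is comparable to the $B=B_{0}$ term.
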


\begin{proof}
Let us assume that $X$ is equipped with the ultrametric $d(x,y)=p^{-\alpha
}\left\Vert x-y\right\Vert _{p}^{\alpha}$, intrinsic for the hierarchical
Laplacian $L$, and define the following variables%
\[
F(x,R)=\left(  \int_{R}^{\infty}\left(  \frac{1}{m(B_{r}(x))}\int_{B_{r}%
(x)}hdm\right)  \frac{dr}{r^{2}}\right)  ^{-1}%
\]
and%
\begin{equation}
\widetilde{d}(x,y)=F(x,d(x,y)). \label{d^tilda'}%
\end{equation}
Since for each fixed $x$ the function $R\rightarrow F(x,R)$ is continuous,
strictly increasing, $0$ at $0$ and $\infty$ at $\infty$, $\widetilde{d}(x,y)$
is an ultrametric on $X$. Let $\widetilde{B}_{\widetilde{R}}(x)$ be a
$\widetilde{d}$-ball of radius $\widetilde{R}$ centred at $x$. Then
$\widetilde{B}_{\widetilde{R}}(x)=B_{R}(x)$ whenever%
\[
\widetilde{R}=F(x,R).
\]
Since $L$ is a hierarchical Laplacian acting in $L^{2}(X,m)$ and $d(x,y)$ is
its intrinsic ultrametric, we have (see \cite[equation (3.11)]{BGPW})%
\begin{align}
J(x,y)  &  =\int_{d(x,y)}^{\infty}\frac{1}{m(B_{R}(x))}\frac{dR}{R^{2}%
}\label{J-function'}\\
&  =\int_{\widetilde{d}(x,y)}^{\infty}\frac{1}{\widetilde{m}(\widetilde
{B}_{\widetilde{R}}(x))}\frac{d\widetilde{R}}{\widetilde{R}^{2}}.\nonumber
\end{align}
It follows that $\widetilde{d}(x,y)$ is intrinsic ultrametric corresponding to
the hierarchical Laplacian $\mathcal{L}$ and%
\begin{align*}
\widetilde{V}(x,\widetilde{R})  &  :=\widetilde{m}(\widetilde{B}%
_{\widetilde{R}}(x))\\
&  =\widetilde{m}(B_{R}(x))=\int_{B_{R}(x)}hdm
\end{align*}
is its volume-function. We claim that%
\begin{equation}
\frac{\widetilde{m}(B_{R}(x))}{m(B_{R}(x))}\asymp\left\{
\begin{array}
[c]{ccc}%
m(B_{R}(x))^{\beta} & \text{if} & d(0,x)\leq R\\
h(x) & \text{if} & d(0,x)>R
\end{array}
\right.  . \label{Claim_d_R}%
\end{equation}
Indeed, if $d(0,x)\leq R$ then $B_{R}(x)=B_{R}(0)$, so applying (\ref{ass.eq}%
), we get
\begin{align*}
\frac{\widetilde{m}(B_{R}(x))}{m(B_{R}(x))}  &  =\frac{1}{m(B_{R}(x))}%
\int_{B_{R}(x)}hdm\\
&  =\frac{1}{m(B_{R}(0))}\int_{B_{R}(0)}hdm\\
&  =\frac{p-1}{p-p^{-\beta}}m(B_{R}(0))^{\beta}=\frac{p-1}{p-p^{-\beta}%
}m(B_{R}(x))^{\beta}.
\end{align*}
On the other hand, if $d(0,x)>R$ then from $y\in B_{R}(x)$ we get that
$d(y,0)=d(x,0),$ so%
\begin{align*}
\frac{\widetilde{m}(B_{R}(x))}{m(B_{R}(x))}  &  =\frac{1}{m(B_{R}(x))}%
\int_{B_{R}(x)}h(y)dm(y)\\
&  =\frac{1}{m(B_{R}(x))}\int_{B_{R}(x)}h(x)dm(y)=h(x).
\end{align*}
Notice that asymptotic relationship (\ref{Claim_d_R}) holds uniformly in $x$
and $R$ in the sence that the corresponding two sided inequality contains
constants which do not depend on $x$ and $R$. In turn, (\ref{Claim_d_R})
implies the following (uniform) asymptotic relationship:%
\begin{equation}
\widetilde{R}=F(x,R)\asymp\left\{
\begin{array}
[c]{ccc}%
R/h(x) & \text{if} & R<d(0,x)\\
R^{\frac{\alpha-\beta}{\alpha}} & \text{if} & R\geq d(0,x)
\end{array}
\right.  \label{Claim_tilda_R}%
\end{equation}
Indeed, if $d(0,x)\leq R$ then
\begin{align*}
\int_{R}^{\infty}\frac{\widetilde{m}(B_{r}(x))}{m(B_{r}(x))}\frac{dr}{r^{2}}
&  \asymp\int_{R}^{\infty}m(B_{r}(x))^{\beta}\frac{dr}{r^{2}}\\
&  \asymp\int_{R}^{\infty}r^{-\left(  2-\frac{\beta}{\alpha}\right)  }dr\asymp
R^{-\left(  1-\frac{\beta}{\alpha}\right)  },
\end{align*}
so
\[
\widetilde{R}:=F(x,R)\asymp R^{1-\frac{\beta}{\alpha}}.
\]
If $d(0,x)\geq R$ then for some constants $C_{1},C_{2}>0$,%
\begin{align*}
\int_{R}^{\infty}\frac{\widetilde{m}(B_{r}(x))}{m(B_{r}(x))}\frac{dr}{r^{2}}
&  =\int_{R}^{d(0,x)}\frac{\widetilde{m}(B_{r}(x))}{m(B_{r}(x))}\frac
{dr}{r^{2}}+\int_{d(0,x)}^{\infty}\frac{\widetilde{m}(B_{r}(x))}{m(B_{r}%
(x))}\frac{dr}{r^{2}}\\
&  =C_{1}d(0,x)^{\frac{\beta}{\alpha}}\left(  \frac{1}{R}-\frac{1}%
{d(0,x)}\right)  +\frac{C_{2}}{d(0,x)^{1-\frac{\beta}{\alpha}}}\\
&  \asymp\frac{d(0,x)^{\frac{\beta}{\alpha}}}{R}\asymp\frac{h(x)}{R},
\end{align*}
so%
\[
\widetilde{R}:=F(x,R)\asymp\frac{R}{h(x)}.
\]
Furthermore, asymptotic relationships (\ref{Claim_d_R}) and
(\ref{Claim_tilda_R}) yield the following (uniform) asymptotic relationship%
\begin{align}
\widetilde{V}(x,\widetilde{R})  &  =\widetilde{m}(B_{R}(x))\label{cal_V}\\
&  \asymp\left\{
\begin{array}
[c]{ccc}%
h(x)R^{\frac{1}{\alpha}} & \text{if} & R<d(0,x)\\
R^{\frac{1+\beta}{\alpha}} & \text{if} & R\geq d(0,x)
\end{array}
\right.  ,\nonumber
\end{align}
or equivalently, we get%
\begin{equation}
\widetilde{V}(x,\widetilde{R})\asymp\left\{
\begin{array}
[c]{ccc}%
h(x)^{1+\frac{1}{\alpha}}\widetilde{R}^{\frac{1}{\alpha}} & \text{if} &
\widetilde{R}<\widetilde{d}(0,x)\\
\widetilde{R}^{\frac{1+\beta}{\alpha-\beta}} & \text{if} & \widetilde{R}%
\geq\widetilde{d}(0,x)
\end{array}
\right.  . \label{cal_V_cal_R}%
\end{equation}
1. Let us consider the case $\left\Vert x-y\right\Vert _{p}=\left\Vert
x\right\Vert _{p}\vee\left\Vert y\right\Vert _{p}$. Then clearly
$d(x,y)=d(0,x)\vee d(0,y)$, and similar equation holds in $\widetilde{d}$
metric. If $R\geq d(0,x)$ then
\begin{equation}
\widetilde{R}:=F(x,R)\asymp R^{1-\frac{\beta}{\alpha}} \label{cal_d'}%
\end{equation}
and
\begin{equation}
\widetilde{V}(x,\widetilde{R})\asymp R^{\frac{1+\beta}{\alpha}}\asymp
\widetilde{R}^{\frac{1+\beta}{\alpha-\beta}}, \label{cal_L_volume}%
\end{equation}
Equation (\ref{cal_L_volume}) implies the following two results:

\begin{enumerate}
\item Since $\delta:=\frac{1+\beta}{\alpha-\beta}>1$, the function
$\widetilde{R}\rightarrow1/\widetilde{V}(x,\widetilde{R})$ is integrable at
$\infty$ for any fixed $x$, so the Markovian semigroup $(e^{-t\mathcal{L}%
})_{t>0}$ (equivalently, the Dirichlet form $Q_{\mathcal{L}}$) is transient
(see \cite[Theorem 2.28]{BGPW}) as it has been stated in Theorem
\ref{non-hom.DF}.

\item The fact that $\widetilde{V}(x,\widetilde{R})\asymp\widetilde{R}%
^{\delta}$, $\delta>1$, for $\widetilde{R}\geq$ $\widetilde{d}(0,x)$, yield
the following asymptotic relationship%
\begin{equation}
g_{\mathcal{L}}(x,y)=%
{\displaystyle\int\limits_{\widetilde{d}(x,y)}^{\infty}}
\frac{d\widetilde{R}}{\widetilde{V}(x,\widetilde{R})}\asymp\frac{\widetilde
{d}(x,y)}{\widetilde{V}(x,\widetilde{d}(x,y))}, \label{g_cal_L''}%
\end{equation}
or equivalently, see equations (\ref{cal_d'}) and (\ref{cal_L_volume}),
\end{enumerate}

\begin{equation}
g_{\mathcal{L}}(x,y)\asymp\left\Vert x-y\right\Vert _{p}^{\alpha-1-2\beta
}=\frac{\left\Vert x-y\right\Vert _{p}^{\alpha-1}}{\left(  \left\Vert
x\right\Vert _{p}\vee\left\Vert y\right\Vert _{p}\right)  ^{2\beta}}\text{ }
\label{g_cal_L'}%
\end{equation}
provided $\left\Vert x\right\Vert _{p}\leq\left\Vert x-y\right\Vert _{p}$.
Similarly, by symmetry, relationship (\ref{g_cal_L'}) holds provided
$\left\Vert y\right\Vert _{p}\leq\left\Vert x-y\right\Vert _{p}$. Thus
finally, the assumption $\left\Vert x-y\right\Vert _{p}=\left\Vert
x\right\Vert _{p}\vee\left\Vert y\right\Vert _{p}$ implies (\ref{g_cal_L'}),
as it was claimed.

2. Let us consider the case $\left\Vert x-y\right\Vert _{p}<\left\Vert
x\right\Vert _{p}\vee\left\Vert y\right\Vert _{p}$. In this case we have:
$\left\Vert x\right\Vert _{p}=\left\Vert y\right\Vert _{p}$ and $\left\Vert
x-y\right\Vert _{p}<\left\Vert x\right\Vert _{p}$, similar relations hold in
$d$ and $\widetilde{d}$ metrics. Having this in mind we write
\[
g_{\mathcal{L}}(x,y)=%
{\displaystyle\int\limits_{\widetilde{d}(x,y)}^{\infty}}
\frac{d\widetilde{R}}{\widetilde{V}(x,\widetilde{R})}=\left(
{\displaystyle\int\limits_{\widetilde{d}(x,y)}^{\widetilde{d}(0,x)}}
+%
{\displaystyle\int\limits_{\widetilde{d}(0,x)}^{\infty}}
\right)  \frac{d\widetilde{R}}{\widetilde{V}(x,\widetilde{R})}=I+II.
\]
Since $\widetilde{d}(0,x)\leq\widetilde{R}$ implies $\widetilde{V}%
(x,\widetilde{R})\asymp\widetilde{R}^{\frac{1+\beta}{\alpha-\beta}}$, we get
\[
II\asymp\frac{\widetilde{d}(0,x)}{\widetilde{V}(x,\widetilde{d}(0,x))}%
\asymp\frac{1}{\widetilde{d}(0,x)^{\frac{1-\alpha+2\beta}{\alpha-\beta}}}.
\]
To estimate the first term we write
\[
I=%
{\displaystyle\int\limits_{\widetilde{d}(x,y)}^{\widetilde{d}(0,x)}}
\frac{d\widetilde{R}}{\widetilde{V}(x,\widetilde{R})}\asymp\frac
{1}{h(x)^{1+\frac{1}{\alpha}}}%
{\displaystyle\int\limits_{\widetilde{d}(x,y)}^{\widetilde{d}(0,x)}}
\frac{d\widetilde{R}}{\widetilde{R}^{\frac{1}{\alpha}}}%
\]
and%
\begin{align*}
\frac{1}{h(x)^{1+\frac{1}{\alpha}}}%
{\displaystyle\int\limits_{\widetilde{d}(x,y)}^{\widetilde{d}(0,x)}}
\frac{d\widetilde{R}}{\widetilde{R}^{\frac{1}{\alpha}}}  &  =\frac
{1}{h(x)^{1+\frac{1}{\alpha}}}\left(  \frac{1}{\widetilde{d}(x,y)^{\frac
{1}{\alpha}-1}}-\frac{1}{\widetilde{d}(0,x)^{\frac{1}{\alpha}-1}}\right) \\
&  =\frac{\widetilde{d}(x,y)^{1-\frac{1}{\alpha}}}{h(x)^{1+\frac{1}{\alpha}}%
}\left(  1-\left(  \frac{\widetilde{d}(x,y)}{\widetilde{d}(0,x)}\right)
^{\frac{1}{\alpha}-1}\right)  .
\end{align*}
Finally, since $\left\Vert x\right\Vert _{p}=\left\Vert y\right\Vert _{p}$ and
$\left\Vert x-y\right\Vert _{p}<\left\Vert x\right\Vert _{p}$, we have%
\begin{align*}
g_{\mathcal{L}}(x,y)  &  =I+II\\
&  \asymp\frac{\widetilde{d}(x,y)^{1-\frac{1}{\alpha}}}{h(x)^{1+\frac
{1}{\alpha}}}\left(  1-\left(  \frac{\widetilde{d}(x,y)}{\widetilde{d}%
(0,x)}\right)  ^{\frac{1}{\alpha}-1}\right)  +\frac{1}{\widetilde
{d}(0,x)^{\frac{1-\alpha+2\beta}{\alpha-\beta}}}\\
&  =\frac{\widetilde{d}(x,y)^{1-\frac{1}{\alpha}}}{h(x)^{1+\frac{1}{\alpha}}%
}\left(  \left(  1-\left(  \frac{\widetilde{d}(x,y)}{\widetilde{d}%
(0,x)}\right)  ^{\frac{1}{\alpha}-1}\right)  +\frac{\widetilde{d}%
(x,y)^{\frac{1}{\alpha}-1}h(x)^{1+\frac{1}{\alpha}}}{\widetilde{d}%
(0,x)^{\frac{1-\alpha+2\beta}{\alpha-\beta}}}\right)  .
\end{align*}
According to (\ref{cal_d'}) $h(x)\asymp\widetilde{d}(0,x)^{\frac{\beta}%
{\alpha-\beta}}$ whence%
\[
\frac{h(x)^{1+\frac{1}{\alpha}}}{\widetilde{d}(0,x)^{\frac{1-\alpha+2\beta
}{\alpha-\beta}}}\asymp\frac{\widetilde{d}(0,x)^{\frac{\beta}{\alpha-\beta
}\left(  1+\frac{1}{\alpha}\right)  }}{\widetilde{d}(0,x)^{\frac
{1-\alpha+2\beta}{\alpha-\beta}}}\asymp\frac{1}{\widetilde{d}(0,x)^{\frac
{1}{\alpha}-1}}%
\]
and thus, using (\ref{Claim_tilda_R}), we get%
\begin{align*}
g_{\mathcal{L}}(x,y)  &  \asymp\frac{\widetilde{d}(x,y)^{1-\frac{1}{\alpha}}%
}{h(x)^{1+\frac{1}{\alpha}}}\asymp\left(  \frac{d(x,y)}{h(x)}\right)
^{1-\frac{1}{\alpha}}\frac{1}{h(x)^{1+\frac{1}{\alpha}}}\\
&  =\frac{d(x,y)^{1-\frac{1}{\alpha}}}{h(x)^{2}}\asymp\frac{\left\Vert
x-y\right\Vert _{p}^{\alpha-1}}{\left\Vert x\right\Vert _{p}^{2\beta}}%
=\frac{\left\Vert x-y\right\Vert _{p}^{\alpha-1}}{\left(  \left\Vert
x\right\Vert _{p}\vee\left\Vert y\right\Vert _{p}\right)  ^{2\beta}}.
\end{align*}
The proof of the theorem is finished.
\end{proof}

\subsection{The Green function $g_{H}(x,y)$}

Throughout this section we assume that $(\alpha-1)/2\leq\beta<\alpha$ and that
$b$ is a solution of equation (\ref{Gamma_alpha_betta}). Then, by Theorem
\ref{Claim1-2}), the operator%
\[
H=\mathfrak{D}^{\alpha}+b\left\Vert x\right\Vert _{p}^{-\alpha}%
\]
is a self-adjoint and non-negative definite operator acting in $L^{2}(X,m)$.
Notice that $b$ is an increasing continuous function of $\beta$ which fulfill
the whole range $\left[  b_{\ast},+\infty\right)  $, where $b_{\ast}%
=-\{\Gamma_{p}\left(  (1+\alpha)/2\right)  \}^{2}$. In particular, $b<0$ for
$(\alpha-1)/2\leq\beta<0$ and $b\geq0$ otherwise.

\begin{theorem}
\label{green_function'}The equation $Hu=v$ has a unique solution
\[
u(x)=%
{\displaystyle\int\limits_{X}}
g_{H}(x,y)v(y)dm(y)\text{,}%
\]
where%
\[
g_{H}(x,y)=h(x)g_{\mathcal{L}}(x,y)h(y).
\]
We call $g_{H}(x,y)$ the Green function of the operator $H$, or the
fundamenthal solution of the equation $Hu=v$.
\end{theorem}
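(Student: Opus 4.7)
The plan is to leverage the unitary $U \colon L^2(X, h^2 m) \to L^2(X, m)$, $Uf = hf$, introduced in Theorem \ref{H-DF}, which conjugates $H$ to $\mathcal{H} = U^{-1} \circ H \circ U$. Since $U$ is unitary, $H^{-1} = U \mathcal{H}^{-1} U^{-1}$, so it suffices to compute the integral kernel of $\mathcal{H}^{-1}$ on $L^2(X, h^2 m)$ and then unwind the conjugation.

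The first step is to identify $\mathcal{H}$ with the hierarchical Laplacian $\mathcal{L}$ of Theorem \ref{non-hom.DF} up to a factor of $h$. From the form identity $Q_{\mathcal{H}}(u,v) = Q_{\mathcal{L}}(u,v)$ together with the expressions \eqref{Q_H_identity} and \eqref{L_cal}, writing $\langle \mathcal{H}u, v\rangle_{L^2(h^2 m)} = Q_{\mathcal{H}}(u,v)$ and comparing integrands gives
\[
\mathcal{H}u(x) = \frac{1}{h(x)}\int (u(x)-u(y))\,J(x,y)\,h(y)\,dm(y) = \frac{1}{h(x)}\mathcal{L}u(x).
\]
Thus $\mathcal{H}u = v$ is equivalent to $\mathcal{L}u = hv$, so $\mathcal{H}^{-1}v = \mathcal{L}^{-1}(hv)$ on a suitable dense class. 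Inserting the representation of $\mathcal{L}^{-1}$ as integration against $g_{\mathcal{L}}(x,y)$ with respect to $\widetilde m = hm$ from Theorem \ref{green_function} yields
\[
\mathcal{H}^{-1}v(x) = \int g_{\mathcal{L}}(x,y)\,h(y)\,v(y)\,h(y)\,dm(y) = \int g_{\mathcal{L}}(x,y)\,h^2(y)\,v(y)\,dm(y),
\]
i.e.\ $g_{\mathcal{L}}(x,y)$ is the kernel of $\mathcal{H}^{-1}$ relative to the reference measure $h^2 m$.

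Unwinding the conjugation, with $U^{-1}v = v/h$ and $Uf = hf$,
\[
(H^{-1}v)(x) = h(x)\,\mathcal{H}^{-1}(v/h)(x) = h(x)\int g_{\mathcal{L}}(x,y)\,h^2(y)\,\frac{v(y)}{h(y)}\,dm(y) = \int h(x)\,g_{\mathcal{L}}(x,y)\,h(y)\,v(y)\,dm(y),
\]
which reads off $g_H(x,y) = h(x)\,g_{\mathcal{L}}(x,y)\,h(y)$. Uniqueness of the solution is inherited from the (transient) invertibility of $\mathcal{L}$ established in Theorem \ref{non-hom.DF}(3), transported back through the isometry $U$.

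The main obstacle I anticipate is that the multiplications by $h$ and $h^{-1}$ are unbounded, since $h(x) = \|x\|_p^{\beta}$ is singular either at the origin (when $\beta<0$) or at infinity (when $\beta>0$). Consequently, the identity $\mathcal{H} = h^{-1}\mathcal{L}$ should not be extracted by naively comparing operator domains but rather read off at the level of quadratic forms on the common core $\mathcal{D}$, and then extended by a closure argument using the regularity of $Q_{\mathcal{L}}$ from Theorem \ref{non-hom.DF}(2); transience of $Q_{\mathcal{L}}$ provides the analytic room needed to make $g_H(x,y)$ into a legitimate integral kernel off the diagonal for $v$ in a sufficiently dense class, for instance $v$ bounded with compact support not containing the origin.
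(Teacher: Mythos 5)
Your strategy coincides with the paper's: conjugate $H$ to $\mathcal{H}=U^{-1}HU$ on $L^2(X,h^2m)$, identify $\mathcal{H}\psi=\tfrac{1}{h}\mathcal{L}\psi$ on the core $\mathcal{D}$, solve $\mathcal{L}u=hv$ via the Green function $g_{\mathcal{L}}$, and conjugate back. That much is right, and the formal computation of the kernel is exactly what appears in the paper.

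However, the mechanism you propose for closing the rigor gap does not match the real difficulty. You suggest that since multiplication by $h^{\pm 1}$ is unbounded, one should "read off at the level of quadratic forms on the common core $\mathcal{D}$, and then extend by a closure argument using the regularity of $Q_{\mathcal{L}}$." The actual obstruction is of a different nature: when $v\in\mathcal{D}$, the solution $u=\mathcal{H}^{-1}v$ belongs to $L^2(X,h^2m)$ but need not lie in $L^2(X,hm)$, so the equation $\mathcal{L}u=hv$ has no immediate meaning in the $L^2(X,hm)$ operator sense, and no amount of closing the form $Q_{\mathcal{L}}$ in $L^2(X,hm)$ will put $u$ into its domain. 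What the paper does instead is the following two-step argument. First it proves the pointwise decay $\mathcal{L}\psi(x)\asymp \|x\|_p^{-(1+\alpha)}$ as $x\to\infty$, hence $\mathcal{L}\psi\in L^q(X,m)$ for all $1\le q\le\infty$; this makes the map $\psi\mapsto (u,\mathcal{L}\psi)_{L^2(X,hm)}$ a bounded functional on $L^q(X,hm)$ for a nontrivial range of $q$. Second, it uses Riesz--Thorin to extend the Markovian semigroup $(e^{-t\mathcal{L}})_{t>0}$ to a contraction semigroup on each $L^q(X,hm)$, with generator $-\mathcal{L}_q$ satisfying $\mathcal{L}_q^\ast=\mathcal{L}_p$; the duality identity $(\mathcal{H}u,\psi)_{L^2(X,h^2m)}=(u,\mathcal{L}\psi)_{L^2(X,hm)}=(hv,\psi)_{L^2(X,hm)}$ then forces $u\in\mathrm{dom}(\mathcal{L}_p)$ and $\mathcal{L}_p u=hv$, whose unique solution has the asserted integral representation. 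Without passing to the $L^p$-scale and exploiting the adjoint relation $\mathcal{L}_q^\ast=\mathcal{L}_p$, your sketch does not establish that $u$ actually satisfies an equation to which the Green function of $\mathcal{L}$ applies.
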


\begin{proof}
We know that $\mathcal{L}:\mathcal{D}\rightarrow L^{2}(X,hm)\cap C_{\infty
}(X)$. Let us show that $\mathcal{L}:\mathcal{D}\rightarrow L^{q}(X,m)$,
$\forall1\leq q\leq\infty$. It is enough to check this property for
$\psi=\mathbf{1}_{B}$, the indicator of an open ball $B$. In this case there
exists a constant $C>0$ such that as $x\rightarrow\infty$ the following
asymptotic relationship holds:%
\begin{align*}
\mathcal{L}\psi(x)  &  =-%
{\displaystyle\int\limits_{B}}
J(x,y)h(y)dm(y)\\
&  =-\frac{1}{\Gamma_{p}(-\alpha)}\frac{1}{\left\Vert x\right\Vert
_{p}^{1+\alpha}}%
{\displaystyle\int\limits_{B}}
hdm\asymp\frac{C}{\left\Vert x\right\Vert _{p}^{1+\alpha}}%
\end{align*}
Clearly this relationship and the fact that $\mathcal{L}\psi(x)$ is bounded
proofs the claim.

In particular, $\mathcal{L}\psi\in L^{2}(X,m)$ and therefore $\frac{1}%
{h}\mathcal{L}\psi\in L^{2}(X,h^{2}m)$ for any $\psi\in\mathcal{D}$. Having
this in mind we do our computations for $\varphi,\psi\in\mathcal{D}:$
\begin{align*}
|Q_{\mathcal{H}}(\varphi,\psi)|  &  =|Q_{\mathcal{L}}(\varphi,\psi
)|=|(\mathcal{L}\psi,\varphi)_{L^{2}(hm)}|\\
&  =\left\vert \left(  \frac{1}{h}\mathcal{L}\psi,\varphi\right)
_{L^{2}(h^{2}m)}\right\vert \leq\left\Vert \frac{1}{h}\mathcal{L}%
\psi\right\Vert _{L^{2}(h^{2}m)}\left\Vert \varphi\right\Vert _{L^{2}(h^{2}%
m)}.
\end{align*}
That means that $\varphi\rightarrow Q_{\mathcal{H}}(\varphi,\psi)$ is a
bounded linear functional in $L^{2}(X,h^{2}m)$ for any $\psi\in\mathcal{D}$.
This fact, in turn, implies that $\mathcal{D}\subset dom(\mathcal{H})$ and
\begin{equation}
\mathcal{H}\psi=\frac{1}{h}\mathcal{L}\psi,\text{ }\forall\psi\in\mathcal{D}.
\label{Eq.H_L}%
\end{equation}
Let us consider the equation $\mathcal{H}u=v$ for $v\in\mathcal{D}$. Since
$\mathcal{D}\subset dom(\mathcal{H})$ we have%
\[
(\mathcal{H}u,\psi)_{L^{2}(X,h^{2}m)}=(u,\mathcal{H}\psi)_{L^{2}(X,h^{2}%
m)},\text{ }\forall\psi\in\mathcal{D}.
\]
Applying equation (\ref{Eq.H_L}) we get
\[
(\mathcal{H}u,\psi)_{L^{2}(X,h^{2}m)}=\left(  u,\frac{1}{h}\mathcal{L}%
\psi\right)  _{L^{2}(X,h^{2}m)}=(u,\mathcal{L}\psi)_{L^{2}(X,hm)}.
\]
On the other hand, we have
\[
(\mathcal{H}u,\psi)_{L^{2}(X,h^{2}m)}=(v,\psi)_{L^{2}(X,h^{2}m)}%
=(hv,\psi)_{L^{2}(X,hm)}.
\]
Our calculations from above show that for H\"{o}lder conjugated $(p,q)$ we
have%
\[
\left\vert (u,\mathcal{L}\psi)_{L^{2}(X,hm)}\right\vert =|(hv,\psi
)_{L^{2}(X,hm)}|\leq\left\Vert hv\right\Vert _{L^{p}(X,hm)}\left\Vert
\psi\right\Vert _{L^{q}(X,hm)}.
\]
It follows that if we choose $1<p<\frac{1+\alpha}{1-\alpha}$, then
$\psi\rightarrow(u,\mathcal{L}\psi)_{L^{2}(X,hm)}$ is a bounded linear
functional in $L^{q}(X,hm)$ provided $q=\frac{p}{p-1}$, i.e. $\frac{1}%
{2}\left(  1+\frac{1}{\alpha}\right)  <q<\infty$.

As $(e^{-t\mathcal{L}})_{t>0}$ is a continuous symmetric Markovian semigroup
an application of the Riesz-Thorin interpolation theorem shows that it can be
extended to all $L^{q}(X,hm)$ as a continuous contraction semigroup. Let
$\mathcal{L}_{q}$ be its minus infinitesimal generator, then $\mathcal{L}_{q}$
extends $\mathcal{L}$, and $\mathcal{L}_{q}^{\ast}=\mathcal{L}_{p}$.

All the above shows that $u$ must belong to the set $dom(\mathcal{L}_{p})$ and
$\mathcal{L}_{p}u=hv$. The equation $\mathcal{L}_{p}u=hv$ has unique solution%
\begin{align*}
u(x)  &  =%
{\displaystyle\int\limits_{X}}
g_{\mathcal{L}}(x,y)(hv)(y)h(y)dm(y)\\
&  =%
{\displaystyle\int\limits_{X}}
g_{\mathcal{L}}(x,y)v(y)h^{2}(y)dm(y).
\end{align*}
It follows that the operator $\mathcal{H}$ acting in $L^{2}(X,h^{2}m)$ admits
a Green function $g_{\mathcal{H}}(x,y)$ and that $g_{\mathcal{H}}(x,y)$
coincides with the function $g_{\mathcal{L}}(x,y)$, the Green function of the
operator $\mathcal{L}$ acting in $L^{2}(X,hm)$:%
\begin{equation}
g_{\mathcal{H}}(x,y)=g_{\mathcal{L}}(x,y). \label{H-L Green functions}%
\end{equation}
Finally, let us consider the equation $Hu=v$. Since $H=U\circ\mathcal{H\circ
}U^{-1}$, we get $\mathcal{H(}U^{-1}u)=U^{-1}v$. It follows that%
\[
\mathcal{(}U^{-1}u)(x)=%
{\displaystyle\int\limits_{X}}
g_{\mathcal{H}}(x,y)(U^{-1}v)(y)h(y)^{2}dm(y),
\]
or equivalently%
\[
u(x)=%
{\displaystyle\int\limits_{X}}
h(x)g_{\mathcal{H}}(x,y)h(y)v(y)dm(y).
\]
That means that equation $Hu=v$ admits a fundamenthal solution
\begin{align*}
g_{H}(x,y)  &  :=h(x)g_{\mathcal{H}}(x,y)h(y)\\
&  =h(x)g_{\mathcal{L}}(x,y)h(y),
\end{align*}
thanks to (\ref{H-L Green functions}). The proof of the theorem is finished.
\end{proof}

\begin{corollary}
\label{g_H Green function}The Green function $g_{H}(x,y)$ is a continuous
function taking finite values off the diagonal set. Moreover, the following
relationship holds:
\begin{equation}
g_{H}(x,y)\asymp\frac{\left\Vert x\right\Vert _{p}^{\beta}\left\Vert
x-y\right\Vert _{p}^{\alpha-1}\left\Vert y\right\Vert _{p}^{\beta}}{\left(
\left\Vert x\right\Vert _{p}\vee\left\Vert y\right\Vert _{p}\right)  ^{2\beta
}},
\end{equation}
or equivalently,%
\[
\frac{g_{H}(x,y)}{g_{L}(x,y)}\asymp\left(  \frac{\left\Vert x\right\Vert _{p}%
}{\left\Vert y\right\Vert _{p}}\wedge\frac{\left\Vert y\right\Vert _{p}%
}{\left\Vert x\right\Vert _{p}}\right)  ^{\beta}.\footnote{It must be compared
with the Green function estimates for Schr\"{o}dinger operators on Riemanian
manifolds, see \cite{Grigoryan}}%
\]

\end{corollary}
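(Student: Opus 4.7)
The proof plan is essentially a direct substitution from the two results immediately preceding the corollary, so there is almost no new work to do.

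First, I would invoke Theorem \ref{green_function'}, which gives the factorisation
\[
g_{H}(x,y) = h(x)\, g_{\mathcal{L}}(x,y)\, h(y), \qquad h(x)=\|x\|_{p}^{\beta},
\]
arising from the unitary conjugation $H = U\circ \mathcal{H}\circ U^{-1}$ with $U : L^{2}(X,h^{2}m)\to L^{2}(X,m)$, $Ug = hg$. This reduces everything about $g_{H}$ to properties of $g_{\mathcal{L}}$ multiplied by the weight $h\otimes h$.

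Next, I would plug in the two-sided bound of Theorem \ref{green_function},
\[
g_{\mathcal{L}}(x,y)\asymp \frac{\|x-y\|_{p}^{\alpha-1}}{(\|x\|_{p}\vee\|y\|_{p})^{2\beta}},
\]
to obtain immediately
\[
g_{H}(x,y)\asymp \frac{\|x\|_{p}^{\beta}\,\|x-y\|_{p}^{\alpha-1}\,\|y\|_{p}^{\beta}}{(\|x\|_{p}\vee\|y\|_{p})^{2\beta}},
\]
which is the first display. Continuity off the diagonal is inherited from the continuity of $g_{\mathcal{L}}$ off the diagonal (established in the paragraph preceding Theorem \ref{green_function}) together with the continuity of $h$ on $X\setminus\{0\}$; the only point requiring a brief check is $x=0$ or $y=0$ (still off diagonal if the other is nonzero), but here the factorisation $g_{H}=h\,g_{\mathcal{L}}\,h$ and the explicit exponents in the bound show that the product has a well-defined finite limit that matches the formula.

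For the equivalent form, I would use $g_{L}(x,y)=g_{\alpha}(x,y)=\Gamma_{p}(\alpha)^{-1}\|x-y\|_{p}^{\alpha-1}$, recorded earlier in the Multipliers subsection, to get
\[
\frac{g_{H}(x,y)}{g_{L}(x,y)}\asymp \frac{\|x\|_{p}^{\beta}\,\|y\|_{p}^{\beta}}{(\|x\|_{p}\vee\|y\|_{p})^{2\beta}}.
\]
A short symmetric case-check (say $\|x\|_{p}\le\|y\|_{p}$, in which $\|x\|_{p}\vee\|y\|_{p}=\|y\|_{p}$) simplifies the right-hand side to $(\|x\|_{p}/\|y\|_{p})^{\beta}$, and by symmetry the combined answer is $(\|x\|_{p}/\|y\|_{p}\wedge \|y\|_{p}/\|x\|_{p})^{\beta}$, giving the second display.

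There is no real obstacle: the entire content sits in Theorems \ref{green_function'} and \ref{green_function}. The only bookkeeping point worth double-checking is the sign of $\beta$ in the range $(\alpha-1)/2\le\beta<\alpha$, which in particular permits $\beta<0$; this is harmless because the factorisation $g_{H}=h\,g_{\mathcal{L}}\,h$ remains valid and the exponents in the final ratio combine consistently regardless of sign, and the behaviour at $x=0$ or $y=0$ is the correct one (blow-up when $\beta<0$, vanishing when $\beta>0$) as is expected for the Green function of a Schrödinger operator with an attractive or repulsive $\|x\|_{p}^{-\alpha}$ singularity at the origin.
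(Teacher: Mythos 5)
Your proposal is correct and takes essentially the same approach as the paper, which simply notes that the corollary follows directly from Theorem \ref{green_function} and Theorem \ref{green_function'}; your elaboration (substituting the two-sided bound for $g_{\mathcal{L}}$ into the factorisation $g_{H}=h\,g_{\mathcal{L}}\,h$, then dividing by $g_{L}(x,y)=\Gamma_{p}(\alpha)^{-1}\|x-y\|_{p}^{\alpha-1}$ and simplifying) is exactly the implicit computation.
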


\begin{proof}
Follows directly from Theorem \ref{green_function} and Theorem
\ref{green_function'}.
\end{proof}


\bigskip

\noindent
A.\ D.\ Bendikov,
Institute of Mathematics, Wroclaw University, Wroclaw, Poland\\
\emph{E-mail address: bendikov@math.uni.wroc.pl}

\bigskip

\noindent
A.\ A.\ Grigor'yan,
Department of Mathematics, University of Bielefeld, Bielefeld, Germany, and
Institute of Control Sciences of Russian Academy of Sciences, Moscow, Russian Federation\\
\emph{E-mail address: grigor@math.uni-bielefeld.de}

\bigskip

\noindent
S.\ A.\ Molchanov,
Department of Mathematics, University of North Carolina, Charlotte, NC 28223,
United States of America, and National Research University, Higher School of
Economics, Russian Federation\\
\emph{E-mail address: smolchan@uncc.edu}
\end{document}